\newcommand{\norm}[1]{\left\Vert#1\right\Vert}
\newcommand{\abs}[1]{\left\vert#1\right\vert}
\newcommand{\RR}{\mathbb R}
\newcommand{\F}{\mathcal{F}}
\newtheorem{theorem}{Theorem}
\newtheorem{lemma}{Lemma}
\newtheorem{proposition}{Proposition}
\theoremstyle{remark}
\newtheorem{remark}{Remark}
\newtheorem{example}{Example}
\theoremstyle{definition}
\newtheorem{definition}{Definition}
\begin{document}
\thispagestyle{plain}
\newcounter{N}


\author{Y. Kozachenko$^{\rm a}$,
A. Melnikov$^{\rm b\ast}$ and Y. Mishura$^{\rm a}$}

\thanks{$^\ast$Corresponding author. Email: melnikov@ualberta.ca
\vspace{6pt}\\
$^{\rm a}${\em{Department of Probability, Statistics
and Actuarial Mathematics, Mechanics and Mathematics Faculty, Kyiv Taras Shevchenko National University,
Volodymyrska, 60, 01601 Kyiv}}; $^{\rm b}${\em{Department of Mathematical and
Statistical Sciences,
University of Alberta,
632 Central Academic Building,
Edmonton, AB T6G 2G1, Canada}}}

\title{On  drift parameter estimation in models with fractional Brownian motion}

\maketitle

\begin{abstract}
We consider a stochastic differential equation involving standard and fractional Brownian motion  with unknown drift parameter to be estimated. We investigate the standard maximum likelihood estimate  of the drift parameter,  two non-standard estimates and three estimates for the sequential estimation.  Model strong consistency and some other properties are proved. The linear model  and Ornstein-Uhlenbeck model  are studied in detail. As an auxiliary result, an asymptotic behavior of the fractional derivative of the fractional Brownian motion is established.
\end{abstract}
\subjclass[2010]{60G22; 60J65; 60H10; 62F05}
\keywords{Fractional Brownian
motion, Brownian motion, parameter estimation; stochastic differential equation; sequential estimation}

\section{Introduction}
Modern mathematical statistics tends to shift away from the standard statistical schemes based on  independent random variables; besides, these days many statistical models are based on continuous time. Therefore, the corresponding statistical problems (e.g., parameter estimation) can be handled by methods of the theory of stochastic processes in addition to the standard statistical methods. Statistics for stochastic processes is  well-developed for diffusion processes and even for semimartingales (see, for instance, \cite{lipshir}) but is still developing for the processes with long-range dependence. The latter is an integral part of stochastic processes, featuring a wide spectrum of applications applications in economics, physics, finance and other fields.
The present paper is devoted to the parameter estimation in such models involving fractional Brownian motion (fBm) with Hurst parameter $H>\frac12$ which  is a well-known long-memory process. The paper also studies a mixed model based on both standard and fractional Brownian motion which turns out to be more flexible. One of the reasons to consider such  model  comes from the modern mathematical finance  where it it has become very popular to assume that
the underlying random noise consists
of two parts: the  fundamental  part, describing the economical background for the
stock price, and the trading  part, related to the randomness inherent to the
stock market. In our case the fundamental part of the noise has a long
memory  while the trading part is  a white noise.

 Statistical aspects of   models involving fractional Brownian motion were studied in many sources. One of the important problems in particular is the drift parameter estimation. In this regard, let us mention papers \cite{HuNu}  and \cite{KlLeBr}, where the fractional Ornstein-Uhlenbeck process with unknown drift parameter originally was studied,  books \cite{bishwal}, \cite{mishura} and \cite{prakasa rao} and the references therein,  and    papers \cite{betotu}, \cite{xiaoZX}, \cite{xiaoZZ}, and \cite{HuXZ}, where the estimate was constructed via discrete observations. We shall also use the results for sequential estimates for semimartingales from \cite{melnnov}.
In the present paper we consider   stochastic differential equations involving fractional Brownian motion along with  equations involving both standard and fractional Brownian motion. We derive the standard maximum likelihood estimate and propose non-standard estimates for the unknown drift parameter. Several non-standard estimates for the drift parameter were proposed in \cite{HuNu} for the fractional Ornstein-Uhlenbeck process. We go a step ahead and propose non-standard estimates for the drift parameter in a general stochastic differential equation involving fBm. For the models involving only fractional Brownian motion, we compare  properties of the estimates. In the mixed models the standard maximum likelihood estimate does not exist but the non-standard estimate works. To formulate the conditions for strong consistency of the non-standard estimates, we need to investigate the  asymptotic behavior of the fractional derivative of the fractional Brownian motion using   the general growth results for Gaussian processes.

The paper is organized as follows. In Section 2 we introduce the models and the estimates: the maximum likelihood estimate, two non-standard estimates and three sequential estimates. Asymptotic growth of the fractional derivative of fBm is established in Section 4. Section 5 contains the main results concerning the strong consistency of all  estimates and some additional properties   of sequential estimates. The linear model and Ornstein-Uhlenbeck model  are studied in detail. We generalize the result of strong consistency of the drift parameter estimate in the Ornstein-Uhlenbeck model from \cite{KlLeBr} to the model with variable  coefficients.

\section{Model description and preliminaries}
\subsection{Model description}
Let $(\Omega, \F,\overline{\F}, P)$ be a complete probability space with filtration $\overline{\F}=\{\F_t, t\in\RR^+\}$ satisfying the standard assumptions. It is assumed that all  processes under consideration are adapted to  filtration $\overline{\F}$.
\begin{definition}\label{def2.1} Fractional Brownian motion (fBm) with Hurst index $H \in (0,1)$
is a  Gaussian process $B^{H}=\{B_{t}^{H}, t \in \RR^+\}$
on $(\Omega, \F, P)$ featuring  the properties
\begin{itemize}
  \item[] (a)\quad $B_{0}^{H}=0$;
  \item[] (b)\quad $EB_{t}^{H}=0,  t \in \RR^+$;
  \item[] (c)\quad $EB_{t}^{H}B_{s}^{H}=\frac{1}{2}( t ^{2H}+ s ^{2H}-|t-s|^{2H}), s,t
\in \RR^+$.
\end{itemize}
\end{definition}
We consider the continuous modification of $B^H$ whose existence is guaranteed by the classical Kolmogorov theorem.

To describe the statistical model, we need to introduce the pathwise integrals w.r.t. fBm. Consider
two non-random functions
 $f$ and $g$  defined on some interval $[a,b]\subset \RR^+$.
 Suppose also
that the  the following limits exist:  $f(u+):=\lim_{\delta\downarrow0}f(u+\delta)\text{
and } g(u-):=\lim_{\delta\downarrow0}g(u-\delta),\   a\leq u\leq
b$. Let $$f_{a+}(x):=(f(x)-f(a+))\textit{1}_{(a,b)}(x), \
g_{b-}(x):=(g(b-)-g(x))\textit{1}_{(a,b)}(x).$$ Suppose that
$f_{a+}\in I_{a+}^{\alpha}(L_p[a,b])), \ g_{b-}\in
I_{b-}^{1-\alpha}(L_q[a,b]))$ for some $p\geq 1, \ q\geq 1,
1/p+1/q\leq 1, \ 0\leq \alpha \leq 1.$ (For the standard notation and
statements concerning fractional analysis, see \cite{Samko}).
Introduce the fractional derivatives
\begin{equation*}\label{eq1.1}(\mathcal{D}_{a+}^{\alpha}f_{a+})(x)=\frac{1}{\Gamma(1-\alpha)}\Big(\frac{f_{a+}(s)}{(s-a)^\alpha}+\alpha
\int_{a}^s\frac{f_{a+}(s)-f_{a+}(u)}{(s-u)^{1+\alpha}}du\Big)1_{(a,b)}(x)\end{equation*}
\begin{equation*}\label{eq1.2}(\mathcal{D}_{b-}^{1-\alpha}g_{b-})(x)=\frac{e^{-\emph{i}\pi
\alpha}}{\Gamma(\alpha)}\Big(\frac{g_{b-}(s)}{(b-s)^{1-\alpha}}+(1-\alpha)
\int_{s}^b\frac{g_{b-}(s)-g_{b-}(u)}{(s-u)^{2-\alpha}}du\Big)1_{(a,b)}(x).\end{equation*}
It is known that
 $\mathcal{D}_{a+}^{\alpha}f_{a+}\in L_p[a,b], \ \mathcal{D}_{b-}^{1-\alpha}g_{b-}\in
L_q[a,b].$

\begin{definition}\label{def2.2} (\cite{Zah98}, \cite{Zah99}) Under above
assumptions, the generalized (fractional) Lebesgue-Stieltjes
integral $\int_a^bf(x)dg(x)$ is defined as
$$\int_a^bf(x)dg(x):=e^{\emph{i}\pi
\alpha}\int_a^b(\mathcal{D}_{a+}^{\alpha}f_{a+})(x)(\mathcal{D}_{b-}^{1-\alpha}g_{b-})(x)dx+f(a+)(g(b-)-g(a+)),$$
and for $\alpha p<1$ it can be simplified to
$$\int_a^bf(x)dg(x):=e^{\emph{i}\pi
\alpha}\int_a^b(\mathcal{D}_{a+}^{\alpha}f)(x)(\mathcal{D}_{b-}^{1-\alpha}g_{b-})(x)dx.$$
\end{definition}

As  follows from \cite{Samko}, for any $1-H<\alpha<1$ there
exist  fractional derivatives $\mathcal{D}_{b-}^{1-\alpha}B_{b-}^H$ and $\mathcal{D}_{b-}^{1-\alpha}B_{b-}^H\in
L_{\infty}[a,b]$ for any $0\leq a<b.$ Therefore, for $f\in I_{a+}^{\alpha}(L_1[a,b])$
we can define the integral w.r.t. fBm in the following way.

\begin{definition}\label{def2.3} (\cite{NR}, \cite{Zah98}, \cite{Zah99}) The
integral with respect to fBm is defined as
\begin{equation}\label{eq2.1} \int_a^bfdB^H:=e^{\emph{i}\pi
\alpha}\int_a^b(\mathcal{D}_{a+}^{\alpha}f)(x)(\mathcal{D}_{b-}^{1-\alpha}B_{b-}^H)(x)dx.\end{equation}
\end{definition}
An  evident estimate follows immediately from \eqref{eq2.1}:
\begin{equation}\label{eq2.2}\Big|\int_a^bfdB^H\Big|\leq\sup_{a\leq x\leq b}|(\mathcal{D}_{b-}^{1-\alpha}B_{b-}^H)(x)|\int_a^b|(\mathcal{D}_{a+}^{\alpha}f)(x)|dx.\end{equation}

Let us take a   Wiener process $W=\{W_t, t\in\RR^+\}$ on  probability space $(\Omega, \F,\overline{\F}, P)$, possibly correlated with $B^H$. Assume that $H>\frac12$ and consider a  one-dimensional mixed stochastic differential equation involving both the Wiener process and the fractional Brownian motion
 \begin{equation}\label{mixed} X_t =x_0 +\theta\int_0^t a(s,X_{s})ds+
\int_0^tb(s,X_{s})dB_s^H+\int_0^tc(s,X_{s})dW_s,\
t\in\RR^+,\end{equation} where $x_0\in\RR$ is the initial value, $\theta$ is the unknown parameter to be estimated, the first integral in the right-hand side of \eqref{mixed} is
the Lebesgue-Stieltjes integral, the second integral is the
generalized Lebesgue-Stieltjes integral introduced in  Definition \ref{def2.3}, and the third one is the It\^{o} integral.
From now on, we  shall assume that the coefficients of  equation \eqref{mixed} satisfy the following assumptions on any interval $[0,T]$:
\begin{list}{Hypotheses}{\parsep=-1mm}
\item[$(A_1)$] \emph{Linear growth of $a$ and $b$}: for any $ s\in [ 0,T]$ and any $x\in
\mathbb{R}$ \begin{equation*} | a(s,x)|  +| b(s,x)| \leq
K(1+\abs{x}).\end{equation*}
\item[$(A_2)$] \emph{Lipschitz continuity  of $a,c$ in space}: for any  $t\in[0,T]$ and $x,y\in \mathbb R$
$$|a(t,x)-a(t,y)|+|c(t,x)-c(t,y)|\le K|x-y|.$$
\item[$(A_3)$] \emph{H\"older continuity in time}:  function $b(t,x)$ is differentiable in $x$ and there exists $\beta\in(1-H,1)$
such that for any ${s,t\in[0,T]}$ and any ${x\in \mathbb R}$
$$|a(s,x)-a(t,x)|+|b(s,x)-b(t,x)|+|c(s,x)-c(t,x)|+|\partial_x{}b(s,x)-\partial_x{}b(t,x)|\le{K}|s-t|^{\beta}.$$
\item[$(A_4)$] \emph{Lipschitz continuity of $\partial_x b$ in space}: for any  $t\in[0,T]$ and any $ x,y\in \mathbb R$  $$
|\partial_x{}b(t,x)-\partial_x{}b(t,y)|\le{K}|x-y|.
$$
\item[$(A_5)$] \emph{Boundedness of $c$ and $\partial_x b$}: for any $s\in[0,T]$ and $x\in \mathbb{R}$
$$
\abs{c(s,x)}+\abs{\partial_x b(s,x)} \leq K.
$$
\end{list}
Here $K$ is  a constant independent of $x$, $y$, $s$ and $t$. For an arbitrary interval $[0,T], \;\alpha>0$  and $\kappa=\frac12\wedge\beta $ define the following norm:

$$\norm{f}_{\infty,\alpha,[0,T]} = \sup_{s\in [0,T]} \left(\abs{f(s)} + \int_0^s \abs{f(s)-f(z)}(s-z)^{-1-\alpha}dz\right).$$ It was proved in  \cite{MiSh} that under assumptions $(A_1)-(A_5)$   there exists   solution $X=\{X_t, \F_t, t\in [0,T]\}$ for equation \eqref{mixed} on any interval $[0,T]$ which satisfies \begin{equation}\label{solution-cond}
\norm{X}_{\infty,\alpha,[0,T]}<\infty\quad\text{a.s.}
\end{equation} for any $\alpha\in(1-H,\kappa)$. This solution is unique in the class of processes satisfying \eqref{solution-cond} for some $\alpha>1-H$.

\begin{remark} In  case when  components $W$ and $B^H$ are independent, assumptions for the coefficients can be relaxed, as it has been shown in \cite{GuNu}. More specifically,  coefficient  $c$ can be of linear growth, and $\partial_x{}b$ can be  H\"{o}lder continuous up to some order less than 1.
\end{remark}

\subsection{Construction of drift parameter  estimates: the standard maximum likelihood estimate.} To start with, consider the case $c(t,x)\equiv0$ which was studied, for instance, in  \cite{KlLeBr} and \cite{mishura}. Recall some facts from the theory of drift parameter estimation in this case. Consider the equation
\begin{equation}\label{main} X_t =x_0 +\theta\int_0^t a(s,X_{s})ds+
\int_0^tb(s,X_{s})dB_s^H,\
t\in\mathbb{R}.\end{equation}

Let  assumptions $(A_1)$ and   $(A_3)$ with $c\equiv0$ hold on any interval $[0,T]$, together with the following assumptions:
\begin{list}{Hypotheses}{\parsep=-1mm}

\item[$(A'_2)$] \emph{Lipschitz continuity of $a,b$ in space}: for any  $t\in[0,T]$ and $x,y\in \mathbb R$
$$|a(t,x)-a(t,y)|+|b(t,x)-b(t,y)|\le K|x-y|,$$

\item[$(A'_4)$] \emph{H\"{o}lder continuity  of $\partial_x{}b(t,x)$ in space:}    there exists such
 $\rho \in(3/2 - H,1)$ that
for any  $t\in[0,T]$ and $ x,y\in\RR$  $$
|\partial_x{}b(t,x)-\partial_x{}b(t,y)|\le{D}|x-y|^\rho,
$$

\end{list}
Then, according to \cite{NR},  solution for  equation \eqref{main} exists  on any interval $[0,T]$ and is unique
in the class of processes satisfying \eqref{solution-cond} for some  $\alpha>1-H$.

In addition, suppose that the following assumption holds:
\begin{list}{Hypotheses}{\parsep=-1mm}
\item[$(B_1)$] $b(t,X_t)\neq 0, t\in[0,T]$ and
$\frac{a(t,X_t)}{b(t,X_t)}$ is a.s. Lebesgue integrable on $[0,T]$ for any $T>0$.
\end{list}
Denote $\psi(t, x)=\frac{a(t,x)}{b(t,x)}$, $\varphi(t):=\psi(t, X_t)$. Also, let the kernel $$l_{H}(t,s)=c_{H}s^{\frac12-H}(t-s)^{\frac12-H}I_{\{0<s<t\}},$$ with $c_{H}=\left(\frac{\Gamma(3-2H)}{2H\Gamma(\frac32-H)^{3}\Gamma(H+\frac12)}\right)^
 {\frac{1}{2}}$, and introduce the integral \begin{equation}\label{frac}J_t=\int _0^t l_{H}(t,s)\varphi(s)ds=c_{H} \int _0^t (t-s)^{\frac12-H}s^{\frac12-H}\varphi(s)ds. \end{equation} Finally, let  $ M_t^H =\int_0^t l_H(t,s)dB_s^H $ be Gaussian martingale with square bracket $\langle M\rangle_t^H=t^{2-2H}$ (Molchan martingale, see \cite {nvv}).

 Consider two processes: $$Y_t=\int_0^tb^{-1}(s, X_s)dX_s=\theta\int_0^t\varphi(s)ds+B_t^H$$ and
 $$Z_t=\int _0^t l_{H}(t,s)dY_s=\theta J_t+M_t^H.$$
 Note that we can rewrite  process $Z$ as $$Z_t=\int_0^tl_{H}(t,s)b^{-1}(s, X_s)dX_s,$$ so $Z$ is a functional of the observable process $X$.  The following smoothness condition for the function $\psi$ (Lemma 6.3.2 \cite{mishura})  ensures the  semimartingale property of $Z$.
\begin{lemma}\label{lem2.1} Let $\psi(t, x)\in C^1(\RR^+)\times C^2(\RR).$
Then  for any $t>0$

\begin{equation}\label{ziprime}
\begin{gathered}J'(t)=(2-2H)C_H\psi(0,x_0)t^{1-2H}
+
\int_0^tl_H(t,s)\left(\psi_t'(s,X_s)+\theta\psi_x'(s,X_s)a(s,X_s)\right)ds\\
-\Big(H-\frac12\Big)
c_H\int_0^ts^{-\frac12-H}(t-s)^{\frac12-H}\int_0^s\Big(\psi_t'(u,X_u)+\theta\psi_x'(u,X_u)a(u,X_u)\Big)duds\\
+(2-2H)c_Ht^{1-2H}\int_0^t
s^{2H-3}\int_0^su^{\frac32-H}(s-u)^{\frac12-H}\psi_x'(u,X_u)b(u,X_u)dB_u^Hds\\
+c_Ht^{-1}\int_0^tu^{\frac32-H}(t-u)^{\frac12-H}\psi_x'(u,X_u)b(u,X_u)dB_u^H,\end{gathered}\end{equation}
where  $C_H=B(\frac32-H,\frac32-H)c_H=\Big(\frac{\Gamma(\frac32-H)}{2H\Gamma(H+\frac12)\Gamma(3-2H)}\Big)^{\frac12},$ and all of the involved integrals  exist a.s.
%
\end{lemma}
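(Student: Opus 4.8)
The plan is to turn the differentiation of the singular integral \eqref{frac} into an ordinary differentiation by first writing $\varphi$ as an explicit additive functional of the solution and then removing the kernel singularity by an integration by parts.

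\emph{Step 1 (chain rule).} Since $H>\frac12$, fractional Brownian motion has zero quadratic variation and the pathwise integral of Definition \ref{def2.3} obeys the classical Newton--Leibniz chain rule with no second-order correction. Applying it to $\varphi(s)=\psi(s,X_s)$ along the solution of \eqref{main}, and using $\psi\in C^1(\RR^+)\times C^2(\RR)$, I obtain
\[
\varphi(s)=\psi(0,x_0)+\int_0^s\big(\psi_t'(u,X_u)+\theta\psi_x'(u,X_u)a(u,X_u)\big)\,du+\int_0^s\psi_x'(u,X_u)b(u,X_u)\,dB_u^H.
\]
This already isolates the two integrands $\psi_t'+\theta\psi_x'a$ and $\psi_x'b$ that occur in \eqref{ziprime}.

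\emph{Step 2 (the constant term).} Substituting this representation into \eqref{frac} and rescaling by $s=tr$ turns the kernel into $c_Ht^{2-2H}r^{\frac12-H}(1-r)^{\frac12-H}$. The constant summand $\psi(0,x_0)$ then contributes
\[
c_H\psi(0,x_0)t^{2-2H}\int_0^1 r^{\frac12-H}(1-r)^{\frac12-H}\,dr=C_H\psi(0,x_0)t^{2-2H},
\]
the Beta integral giving exactly $C_H=B(\tfrac32-H,\tfrac32-H)c_H$; differentiating yields the first term of \eqref{ziprime}.

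\emph{Step 3 (the singular part).} For the two remaining summands I cannot differentiate under the integral sign: with $H>\frac12$ the factor $(t-s)^{\frac12-H}$ blows up at $s=t$ and its formal $t$-derivative $(\frac12-H)(t-s)^{-\frac12-H}$ is not integrable. The remedy is to interchange the order of integration (Fubini for the $du$-part, the stochastic Fubini theorem for the $dB_u^H$-part), so that the inner $s$-integration becomes $\int_u^t s^{\frac12-H}(t-s)^{\frac12-H}\,ds$, and then to integrate this by parts in $s$, which raises the exponent $\frac12-H$ to $\frac32-H>0$. Differentiation in $t$ is now legitimate, the boundary term at $s=t$ vanishes, and a final interchange restores the order of integration. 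Carried out on the drift summand this produces precisely the $l_H$-integral (second term) together with the $(H-\tfrac12)$-weighted correction (third term); carried out on the pathwise summand it produces, after the scaling rearrangement that converts the weight $u^{\frac12-H}$ into $u^{\frac32-H}$, the two remaining stochastic terms with their prefactors $t^{-1}$ and $(2-2H)t^{1-2H}s^{2H-3}$.

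\emph{Main obstacle.} The difficulty is analytic rather than algebraic: making the interchange of $\frac{d}{dt}$ with the singular (and, in one case, stochastic) integral rigorous, and checking that every integral in \eqref{ziprime} is a.s.\ finite. Both rest on the a priori regularity of the solution. The bound \eqref{solution-cond} on $\norm{X}_{\infty,\alpha,[0,T]}$ together with $(A_1)$--$(A_5)$ controls $\psi_x'b$ and $\psi_t'+\theta\psi_x'a$ in the Hölder norms needed for the pathwise integrals and the stochastic Fubini theorem, while the estimate \eqref{eq2.2} bounds each $dB^H$-integral by $\sup_{x}|(\mathcal D_{b-}^{1-\alpha}B_{b-}^H)(x)|$ times an a.s.\ finite Lebesgue integral; the extra power $u^{\frac32-H}$ supplied by the integration by parts is what tames the weights $s^{2H-3}$ and $t^{-1}$ near the origin. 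I expect the stochastic terms to be the most delicate point, precisely because the naive $t$-derivative of $\int_0^s\psi_x'(u,X_u)b(u,X_u)\,dB_u^H$ does not exist and must be circumvented by this regularization.
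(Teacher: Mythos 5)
The paper contains no proof of Lemma \ref{lem2.1} to compare against: it is quoted from Lemma 6.3.2 of \cite{mishura}. Judged on its own merits, your proposal is essentially sound, and where it can be checked it is correct: the pathwise change-of-variables formula in Step 1 is legitimate because the solution of \eqref{main} is a.s.\ H\"older of order exceeding $\frac12$; the Beta-function computation in Step 2 gives the first term of \eqref{ziprime}; and your Fubini-plus-integration-by-parts treatment of the drift summand reproduces the second and third terms exactly. The only substantive divergence is in the stochastic summand: your integration by parts gives its derivative as $c_H\int_0^t\bigl[(t-u)^{\frac12-H}u^{\frac12-H}+(\frac12-H)\int_u^t(t-s)^{\frac12-H}s^{-\frac12-H}ds\bigr]h(u)\,dB_u^H$ with $h(u)=\psi_x'(u,X_u)b(u,X_u)$, which is equal to, but does not visibly coincide with, the last two terms of \eqref{ziprime}; matching them requires the kernel identity
\begin{equation*}
\int_u^t(t-s)^{\frac12-H}s^{\frac12-H}\,ds=t^{2-2H}u^{\frac32-H}\int_u^t s^{2H-3}(s-u)^{\frac12-H}\,ds
\end{equation*}
(substitute $s\mapsto ut/s$), i.e.\ precisely the ``scaling rearrangement'' you mention but never perform. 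The more useful observation is that this identity, applied \emph{before} differentiation, dissolves what you call the main obstacle: together with one Fubini interchange it rewrites the stochastic part of $J(t)$ as $c_Ht^{2-2H}\int_0^t s^{2H-3}N(s)\,ds$ with $N(s)=\int_0^s u^{\frac32-H}(s-u)^{\frac12-H}h(u)\,dB_u^H$, so that all $t$-dependence sits in the outer power $t^{2-2H}$ and in the upper limit of an ordinary integral of a continuous, locally integrable function. The product rule and the fundamental theorem of calculus then yield the fourth and fifth terms of \eqref{ziprime} directly, with no interchange of $\frac{d}{dt}$ with a stochastic integral and no boundary-term analysis at $s=t$. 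Your route can be made rigorous, but this one is both more elementary and produces the formula in exactly the stated shape, which is why the lemma is written that way. (A minor slip: the standing assumptions for \eqref{main} are $(A_1)$, $(A_3)$, $(A'_2)$, $(A'_4)$; the list $(A_1)$--$(A_5)$ you invoke belongs to the mixed equation \eqref{mixed}.)
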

\begin{remark}\label{remzhiprime} Suppose that $\psi(t, x)\in C^1(\RR^+)\times C^2(\RR)$ and  limit $\varsigma(0)=\lim_{s\rightarrow0}\varsigma(s)$ exists a.s., where $\varsigma(s)=s^{\frac12-H}\varphi(s)$. In this case  $J(t)$ can be presented as   $$J(t)=c_H\int_0^t(t-s)^{\frac12-H}\varsigma(s)ds = \frac{c_Ht^{\frac32-H}}{\frac32-H}\varsigma(0)+c_H\int_0^t\frac{(t-s)^{\frac32-H}}{\frac32-H}\varsigma'(s)ds,$$ and $J'(t)$ from \eqref{ziprime} can be simplified to
\begin{equation*}
\begin{gathered}J'(t)=c_Ht^{\frac12-H}\varsigma(0)+\int_0^tl_H(t,s)\Big(\Big(\frac12-H\Big)s^{-1}\varphi(s)
+\psi_t'(s,X_s)\\+\theta\psi_x'(s,X_s)a(s,X_s)\Big)ds+\int_0^tl_H(t,s)\psi_x'(s,X_s)b(s,X_s)dB_s^H.
\end{gathered}
\end{equation*}
\end{remark}

Same way as $Z $,  processes $J $ and $J' $ are  functionals of $X$. It is more convenient to consider  process $\chi(t)=(2-2H)^{-1}J'(t)t^{2H-1}$, so that  $$Z_t=(2-2H)\theta \int_0^t\chi(s)s^{1-2H}ds+M_t^H=\theta \int_0^t\chi(s)d\langle M^H\rangle _s+M_t^H.$$

Suppose that the following conditions hold:

\begin{enumerate} \item [$(B_2)$] $EI_T:=E\int_0^T\chi^2_sd\langle M^H\rangle _s<\infty$ for any $T>0$,

\item [$(B_3)$] $I_\infty:=\int_0^\infty\chi^2_sd\langle M^H\rangle _s=\infty$
a.s.
\end{enumerate}
Then we can consider the maximum likelihood estimate $$\theta^{(1)}_T=\frac{\int_0^T\chi_sdZ_s}{\int_0^T\chi^2_sd\langle M^H\rangle _s}=\theta+\frac{\int_0^T\chi_sdM^H_s}{\int_0^T\chi^2_sd\langle M^H\rangle _s}.$$
 Condition $(B_2)$ ensures that  process $\int_0^t\chi_sdM^H_s, t>0$ is a square integrable martingale, and condition $(B_3)$ alongside with the law of large numbers for martingales ensure   that $\frac{\int_0^T\chi_sdM^H_s}{\int_0^T\chi^2_sd\langle M^H\rangle _s}\rightarrow 0$ a.s. as $T\rightarrow\infty$. Summarizing, we arrive at the following result (\cite{mishura}).
 \begin{proposition}\label{pro_2.1} Let $\psi(t, x)\in C^1(\RR^+)\times C^2(\RR) $ and assumptions $(A_1)$, $(A_3)$, $(A'_2)$, $(A'_4)$ and $(B_1)$--$(B_3)$ hold.  Then   estimate $\theta^{(1)}_T$ is strongly consistent as $T\rightarrow\infty$.
 \end{proposition}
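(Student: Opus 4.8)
The plan is to reduce the statement to the strong law of large numbers for continuous martingales. By the very construction of the estimate one has the exact representation
$$\theta^{(1)}_T - \theta = \frac{\int_0^T\chi_s\,dM^H_s}{\int_0^T\chi^2_s\,d\langle M^H\rangle_s},$$
so the entire task is to prove that this ratio tends to $0$ a.s. as $T\to\infty$. I would write $N_T:=\int_0^T\chi_s\,dM^H_s$ and note that, since $M^H$ is a continuous Gaussian martingale with $\langle M^H\rangle_t=t^{2-2H}$ and $\chi$ is an adapted functional of the observable process $X$, the process $N$ is a continuous local martingale whose quadratic variation is precisely $\langle N\rangle_T=\int_0^T\chi^2_s\,d\langle M^H\rangle_s=I_T$.

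First I would upgrade $N$ from a local martingale to a genuine square-integrable martingale, and this is exactly where assumption $(B_2)$ enters: $EI_T=E\int_0^T\chi^2_s\,d\langle M^H\rangle_s<\infty$ says $E\langle N\rangle_T<\infty$, whence by the It\^o isometry $N_T\in L^2$ and $N$ is a square-integrable martingale on each finite interval $[0,T]$. The standing smoothness hypotheses — $\psi\in C^1(\RR^+)\times C^2(\RR)$ together with $(A_1)$, $(A_3)$, $(A'_2)$, $(A'_4)$ and $(B_1)$ — are what guarantee beforehand that the solution $X$ exists and is unique, that $Z$ is a semimartingale via the representation of $J'$ in Lemma \ref{lem2.1}, and consequently that $\chi$ and the stochastic integral $N$ are well defined; at this point I would simply invoke them.

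Next I would apply the strong law of large numbers for continuous martingales: for such an $N$ with $N_0=0$ one has $N_T/\langle N\rangle_T\to0$ a.s. on the event $\{\langle N\rangle_\infty=\infty\}$. The role of assumption $(B_3)$ is exactly to secure $\langle N\rangle_\infty=I_\infty=\infty$ a.s., so that event has full probability and the convergence holds almost surely. Combining the two facts gives $\theta^{(1)}_T-\theta=N_T/\langle N\rangle_T\to0$ a.s., which is the asserted strong consistency.

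I expect the only genuinely delicate point to be the verification that $N$ lies within the scope of the martingale strong law — that it is a bona fide continuous square-integrable martingale with quadratic variation $I_T$, and that the stochastic integral against $M^H$ is well defined with $\chi$ adapted. Once $(B_2)$ and $(B_3)$ are in force this is routine, since the substantial analytic labor has already been absorbed into the construction of $\chi$ through the representation of $J'(t)$ in Lemma \ref{lem2.1}.
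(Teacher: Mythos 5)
Your proof is correct and follows exactly the paper's own argument: the paper likewise uses the representation $\theta^{(1)}_T-\theta=\int_0^T\chi_s\,dM^H_s\big/\int_0^T\chi^2_s\,d\langle M^H\rangle_s$, invokes $(B_2)$ to make the numerator a square-integrable martingale, and applies the strong law of large numbers for martingales together with $(B_3)$ (which forces $I_\infty=\infty$ a.s.) to conclude the ratio vanishes almost surely. Your added care about upgrading the local martingale to a genuine square-integrable one and identifying its quadratic variation is a sound elaboration of the same route, not a departure from it.
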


\subsection{Construction of drift parameter  estimates: two non-standard estimates. } In  case when $c=0$, it is possible to construct another estimate for  parameter $\theta$,    preserving the structure of the standard maximum likelihood estimate. Similar approach was applied in \cite{HuNu} to the fractional Ornstein-Uhlenbeck process with constant coefficients. We shall use process $Y$ to  define the estimate as  \begin{equation}\label{teta2}\theta^{(2)}_T=\frac{\int_0^T\varphi_sdY_s}{\int_0^T\varphi^2_sds}
 =\theta+\frac{\int_0^T\varphi_sdB^H_s}{\int_0^T\varphi^2_sds}.\end{equation}

 Let us return to general  equation \eqref{mixed} with  non-zero $c$ and construct the  estimate of parameter $\theta$. Suppose that the following assumption holds: \begin{enumerate} \item [$(C_1)$] $c(t,X_t)\neq 0, t\in[0,T]$,
$\frac{a(t,X_t)}{c(t,X_t)}$ is a.s. Lebesgue integrable on $[0,T]$ for any $T>0$ and there exists generalized Lebesgue--Stieltjes integral  $\int_0^T \frac{b(t,X_t)}{c(t,X_t)}dB_t^H$.\end{enumerate}

 Define   functions  $\psi_1(t, x)=\frac{a(t,x)}{c(t,x)}$ and  $\psi_2(t, x)=\frac{b(t,x)}{c(t,x)}$,  processes $\varphi_i(t)=\psi_i(t, X_t), i=1,2$ and   process $$Y_t=\int_0^tb^{-1}(s, X_s)dX_s=\theta\int_0^t\varphi_1(s)ds+\int_0^t\varphi_2(s)dB_s^H+W_t.$$
 Evidently, $Y$ is a functional of $X$ and is observable. Assume additionally   that the  generalized Lebesgue--Stieltjes integral  $\int_0^T \varphi_1(t)\varphi_2(t)dB_t^H$ exists and

 \begin{enumerate} \item [$(C_2)$] for any $T>0$ $E\int_0^T\varphi_1^2(s)ds<\infty. $\end{enumerate}

   Denote $\vartheta(s)=\varphi_1(s)\varphi_2(s)$. We can consider the following estimate of parameter $\theta$:
 \begin{equation}\label{eq.teta3} {\theta}_T^{(3)}=\frac{\int_0^T\varphi_1(s)dY_s}{\int_0^T\varphi_1^2(s)ds}
 =\theta+\frac{\int_0^T\vartheta(s)dB^H_s}{\int_0^T\varphi_1^2(s)ds}+\frac{\int_0^T\varphi_1(s)dW_s}
 {\int_0^T\varphi_1^2(s)ds}.\end{equation}

 Estimate  ${\theta}_T^{(3)}$ preserves the
   traditional form of maximum likelihood estimates for diffusion models. The right-hand side of \eqref{eq.teta3} provides
    a stochastic representation of ${\theta}_T^{(3)}$.  We shall use  it to investigate the strong consistency of this estimate.
\subsection{Construction of drift parameter  estimates: sequential  estimates. } Return to model \eqref{main} and suppose that
     conditions $(B_1)-(B_3)$ hold. For any $h>0$ consider the stopping time $$\tau(h)=\inf\{t>0: \int_0^t\chi^2_sd\langle M\rangle_s=h\}.$$
     Under conditions $(B_1)-(B_2)$ we have $\tau(h)<\infty$ a.s. and $\int_0^{\tau(h)}\chi^2_sd\langle M\rangle_s=h$. The sequential maximum likelihood estimate has a form \begin{equation}\label{3.30}
     {\theta}_{\tau(h)}^{(1)}=\frac{\int_0^{\tau(h)}\chi_sdZ_s}{h}
     =\theta+\frac{\int_0^{\tau(h)}\chi_sdM^H_s}{h}.\end{equation}
Sequential versions of estimates ${\theta}_T^{(2)}$ and ${\theta}_T^{(3)}$ have a form
$${\theta}_{\tau(h)}^{(2)}=\theta+\frac{\int_0^{\tau(h)}\varphi_sdB^H_s}{h} $$ and
$${\theta}_{\upsilon(h)}^{(3)}=\theta
+\frac{\int_0^{\upsilon(h)}\vartheta(s)dB^H_s}{h}+\frac{\int_0^{\upsilon(h)}\varphi_1(s)dW_s}{h}, $$
where $$\upsilon(h)=\inf\{t>0: \int_0^t\varphi^2_1(s)ds=h\}.$$

To provide an exhaustive study of the introduced estimates, we will need a number of  auxiliary facts about Gaussian
processes. These facts are presented in the next section. Technical proofs may be found in Appendix.

 \section{Auxiliary results for Gaussian processes related to the fractional Brownian motion.}
 We start with the exponential maximal bound for a Gaussian process defined on an abstract  pseudometric space, expressed in terms of the metric capacity of this space. This result is a particular case of the general theorem proved in  \cite{BulKoz}, p. 100.
\begin{lemma}\label{lem_2}
Let $\mathbf{T}$ be a non-empty  set,
$X = \{X(\textbf{t}),\ \textbf{t} \in \mathbf{T}\}$ be centered Gaussian process. Suppose that the  pseudometric space $(\mathbf{T},\rho)$  with  pseudometric
$$
\rho\left(\textbf{t},\textbf{s}\right) = \left(E(X(\textbf{t}) - X(\textbf{s}))^2\right)^{\frac{1}{2}}
$$
is separable and  process $X$ is separable on this space. Also, let the following conditions hold:
\begin{equation*}\label{riv_l}
a:= \sup_{t \in \mathbf{T}}\left(E|X(\textbf{t})|^2\right)^\frac{1}{2} < \infty,
\end{equation*}
and
\begin{equation*}\label{riv_m}
\int_0^{a}(\log N_\textbf{T}(u))^\frac{1}{2}du < \infty,
\end{equation*}
where $N_\textbf{T}(u)$ is the number of elements in the minimal $u$-covering of  space $(\mathbf{T},\rho)$. Then for any  $\lambda > 0$ and any $\theta\in(0,1)$ the following inequality holds:
\begin{equation*}\label{riv_n}
E\exp\left\{\lambda\sup_{t \in \mathbf{T}}|X(\textbf{t})|\right\} \leq 2Q(\lambda,\theta),
\end{equation*}
where
$$
Q(\lambda,\theta) =  \exp \left\{\frac{\lambda^2a^2}{2(1 - \theta)^2} + \frac{2\lambda}{\theta(1 - \theta)}\int_0^{\theta a}\left(\log(N_\textbf{T}(u))\right)^\frac{1}{2}du \right\}.
$$
\end{lemma}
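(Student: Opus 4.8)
The plan is to prove the bound by Dudley-type chaining with respect to the canonical pseudometric \(\rho\); this is exactly the specialization of the general Buldygin--Kozachenko theorem to a Gaussian process, for which each increment is sub-Gaussian with parameter equal to its \(L_2\)-norm. First I would reduce to a finite index set: since \((\mathbf{T},\rho)\) is separable and \(X\) is separable, I choose an increasing sequence of finite sets \(\mathbf{T}_n\) whose union is \(\rho\)-dense, so that \(\sup_{\mathbf{t}\in\mathbf{T}_n}|X(\mathbf{t})|\uparrow\sup_{\mathbf{t}\in\mathbf{T}}|X(\mathbf{t})|\); by monotone convergence it then suffices to bound \(E\exp\{\lambda\max_{\mathbf{t}\in\mathbf{T}_n}|X(\mathbf{t})|\}\) by \(2Q(\lambda,\theta)\) uniformly in \(n\). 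The two elementary inputs are, for a centered Gaussian (hence sub-Gaussian) pair, \(E\exp\{\mu(X(\mathbf{t})-X(\mathbf{s}))\}=\exp\{\mu^2\rho(\mathbf{t},\mathbf{s})^2/2\}\), and the finite maximal estimate \(P(\max_{i\le m}|\xi_i|>x)\le 2m\exp\{-x^2/(2\sigma^2)\}\) for \(m\) centered sub-Gaussian variables of parameter at most \(\sigma\).

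Next I would set up the chaining, with \(\theta\in(0,1)\) playing the role of the geometric ratio of the approximating scales: put \(\epsilon_k=\theta^k a\) and let \(A_k\) be a minimal \(\epsilon_k\)-net, so \(|A_k|=N_\mathbf{T}(\epsilon_k)\), with nearest-point maps \(\pi_k:\mathbf{T}\to A_k\). The telescoping identity \(X(\mathbf{t})=X(\pi_0(\mathbf{t}))+\sum_{k\ge1}\big(X(\pi_k(\mathbf{t}))-X(\pi_{k-1}(\mathbf{t}))\big)\) expresses \(X(\mathbf{t})\) through chaining links whose variances are at most \((\epsilon_k+\epsilon_{k-1})^2\), and the number of distinct links at level \(k\) is at most \(N_\mathbf{T}(\epsilon_k)N_\mathbf{T}(\epsilon_{k-1})\le N_\mathbf{T}(\epsilon_k)^2\). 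Thus \(\max_{\mathbf{T}_n}|X|\le\max_{A_0}|X|+\sum_{k\ge1}r_k\), where \(r_k\) is the maximum over all level-\(k\) links of their absolute value.

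The heart of the argument is to assemble these pieces into a single Gaussian-type tail bound. For each level I allocate a deviation budget \(v_k\) proportional to the link scale and apply the maximal estimate, \(P(r_k>E r_k+v_k)\le 2N_\mathbf{T}(\epsilon_k)^2\exp\{-v_k^2/(2\sigma_k^2)\}\) with \(\sigma_k\) of order \(\epsilon_k\); choosing \(v_k=\sigma_k v/s\) with \(s=\sum_k\sigma_k\) equalizes all exponents to \(-v^2/(2s^2)\), and a union bound over levels gives \(P(\sup_\mathbf{t}|X(\mathbf{t})|>m+v)\le 2\exp\{-v^2/(2s^2)\}\). Here \(s\) is the sum of the geometric progression of link scales, of order \(a/(1-\theta)\), while the shift \(m\) collects the first moments \(E r_k\) (each of order \(\epsilon_k(\log N_\mathbf{T}(\epsilon_k))^{1/2}\)) together with the logarithmic union-bound prefactors. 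Replacing the discrete sum \(\sum_k\epsilon_k(\log N_\mathbf{T}(\epsilon_k))^{1/2}\) by the entropy integral---bounding each term by the integral of \((\log N_\mathbf{T}(u))^{1/2}\) over \([\epsilon_{k+1},\epsilon_k]\) and using that \(N_\mathbf{T}\) is nonincreasing---turns \(m\) into a multiple of \(\int_0^{\theta a}(\log N_\mathbf{T}(u))^{1/2}du\). Integrating \(e^{\lambda u}\) against the resulting tail yields \(E\exp\{\lambda\sup_\mathbf{t}|X(\mathbf{t})|\}\le 2\exp\{\lambda m+\lambda^2 s^2/2\}\), which is the announced \(2Q(\lambda,\theta)\) once the constants are identified.

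I expect the main obstacle to be the constant bookkeeping in this last step: distributing the budgets \(v_k\), bounding the link variances sharply enough, and absorbing the prefactors \(\prod_k 2N_\mathbf{T}(\epsilon_k)^2\) into \(m\) so that the two terms of \(\log Q\) come out with exactly the coefficients \(\frac{1}{2(1-\theta)^2}\) and \(\frac{2}{\theta(1-\theta)}\) while the global prefactor stays exactly \(2\). An equivalent assembly uses H\"older's inequality across levels with \(\lambda\)-dependent weights balancing the variance sum against the entropy sum; there the weight optimization is precisely what separates the \(\lambda^2\)-variance term from the \(\lambda\)-linear entropy term. A secondary, purely technical point is the separability already invoked in the reduction, needed to justify the a.s.\ telescoping and the interchange of the supremum with the expectation. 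Since the statement is quoted as a special case of the theorem on p.~100 of \cite{BulKoz}, the most economical write-up merely checks that a centered Gaussian process is \(\varphi\)-sub-Gaussian with \(\varphi(x)=x^2/2\) and invokes that theorem directly.
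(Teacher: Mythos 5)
The paper does not actually prove this lemma: it is quoted, without proof, as a particular case of the general theorem of Buldygin and Kozachenko (\cite{BulKoz}, p.~100). So the closing sentence of your proposal --- check that a centered Gaussian process is $\varphi$-sub-Gaussian with $\varphi(x)=x^2/2$ and invoke that theorem --- is precisely the paper's own treatment, and that part of your write-up is correct and sufficient for the paper's purposes.

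Your standalone chaining argument, however, has a genuine gap exactly where you flag ``constant bookkeeping'': the route you describe cannot yield the bound as stated, with these constants. First, the assembly via a tail bound plus integration of $e^{\lambda v}$ does not work: from $P\left(\sup|X|>m+v\right)\le 2\exp\left\{-v^2/(2s^2)\right\}$ one gets $E\exp\left\{\lambda\sup|X|\right\}\le e^{\lambda m}\left(1+C\lambda s\, e^{\lambda^2s^2/2}\right)$, and the factor of order $\lambda s$ cannot be removed, so the clean prefactor $2$ is lost. Second, your variance accounting overshoots: the level-$k$ links have parameter at most $\epsilon_k+\epsilon_{k-1}$ and the level-$0$ maximum has parameter $a$, so the total is $s=a+\sum_{k\ge1}(\theta^k+\theta^{k-1})a=2a/(1-\theta)$, giving an exponent $2\lambda^2a^2/(1-\theta)^2$, i.e.\ four times the stated $\lambda^2a^2/\left(2(1-\theta)^2\right)$. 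Third, the union-bound prefactors $\prod_k 2N_{\mathbf{T}}(\epsilon_k)^2$ cannot simply be ``absorbed into $m$'' without inflating the entropy coefficient beyond $2/(\theta(1-\theta))$. The mechanism that does produce exact constants is the one you mention only in passing: bound the moment generating function directly, using H\"older's inequality across levels with weights $r_\ell$ satisfying $\sum_\ell 1/r_\ell=1$ (this keeps the prefactor exactly $2^{\sum_\ell 1/r_\ell}=2$), and then optimize the weights to separate the $\lambda^2$-variance term from the $\lambda$-entropy term. Notably, this weighted-H\"older device is exactly what the paper itself uses when it derives its Theorem 2 \emph{from} this lemma, so if you want a self-contained proof you should model it on that computation rather than on tail-integration chaining.
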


Consider   set $\mathbf{T} = \{\textbf{t}=(t_1,t_2)\in\mathbb{R}^2_+: 0 \leq t_2 \leq t_1\}$  supplied with the distance
$$
m\left(\textbf{t},\textbf{s}\right) = |t_1-s_1|\vee|t_2-s_2|.
$$
Assume  random process $X = \{X(\textbf{t}),\textbf{t}\in\mathbf{T}\}$ satisfies the following conditions.
\begin{enumerate}\item [$(D_1)$] Process $X$ is a centered Gaussian process on $\mathbf{T}$,
 separable on  metric space  $(\mathbf{T},m)$.

\item [$(D_2)$] There exist   $\beta>0, \gamma>0$ and  a constant ${C}(\beta, \gamma)$ independent of $X$, $\textbf{t}$ and $\textbf{s}$ such that for any $\textbf{t},\textbf{s}\in\textbf{T}$
\begin{equation}\label{riv_f}
\left(E(X(\textbf{t}) - X(\textbf{s}))^2\right)^{\frac{1}{2}}
\leq{C}(\beta, \gamma)\left(t_1\vee s_1 \right)^\beta\left(m\left(\textbf{t},\textbf{s}\right)\right)^\gamma.
\end{equation}
\item [$(D_3)$] There exist $\delta>0$ and  a constant ${C}(\delta)$ independent of $X$ and $\textbf{t}$ such that for any $\textbf{t}\in\textbf{T}$
\begin{equation}\label{riv_e}
\left(E(X(\textbf{t}))^2\right)^{\frac{1}{2}} \leq {C}(\delta)
t_1^\delta.
\end{equation}
\end{enumerate}

Let us introduce the following notations. Let $A(t) > 1, t \geq 0$ be an increasing function such that $A(t) \rightarrow \infty$, $t \rightarrow \infty$. Consider an increasing sequence
  $b_0 = 0$, $b_\ell < b_{\ell+1},l\geq1 $ and suppose that $ b_\ell \rightarrow \infty ,\; \ell\rightarrow\infty$. For
$\delta_\ell = A(b_\ell)$ and $\kappa>0$ we denote  $$S(\delta)=\sum_{\ell = 0}^{\infty}b_{\ell+1}^{\delta }\delta_\ell^{-1},\;
\kappa_1 = \frac{\kappa}{2}\left(1 + \frac{\beta}{\gamma} - \frac{\delta}{\gamma}\right),\;B_1 = {C}(\delta)S(\delta),$$
$$C_1 = C_2\kappa^{-\frac{1}{2}}S(\delta+\kappa_1)\;\text{and}\; C_2= \frac{2^{\frac{1-\kappa}{2}}}{{1-\frac{\kappa}{2\gamma}}}({{C}(\delta)})^{1-\frac{\kappa}{2\gamma}}
({C}(\beta, \gamma))^{\frac{\kappa}{2\gamma}}.$$ Now we shall present the  auxiliary exponential maximal bound for a Gaussian process defined on $(\mathbf{T},m)$.

\begin{theorem}\label{teo_2}
Let $\{X(\textbf{t}), \textbf{t}\in \textbf{T}\}$ be a random process satisfying assumptions $(D_1)-(D_3)$. Let  $0\leq a <b$, set  $\textbf{T}_{a,b} = \{\textbf{t}=(t_1,t_2)\in \textbf{T} : a \leq t_1 \leq b,\; 0\leq t_2\leq t_1\}$. Then for any
 $
0 < \theta < 1,\ \lambda > 0\quad\text{and}\quad 0 < \kappa < 1\wedge2\gamma
$
the following inequality holds:
\begin{equation*}\label{riv_o}
E\exp\left\{\lambda\sup_{\textbf{t} \in \textbf{T}_{a,b}}|X(\textbf{t})|\right\} \leq 2\widetilde{Q}(\lambda,\theta),
\end{equation*}
where
$$
\widetilde{Q}(\lambda,\theta) = \exp \left\{\frac{\lambda^2(b^\delta {C}({\delta}))^2}{2(1 - \theta)^2}
 + \frac{2\lambda}{1 - \theta}b^{\delta + \kappa_1}\frac{C_2}{\theta^{\frac{\kappa}{2\gamma}}\kappa^{\frac12}}\right\}.
$$
\end{theorem}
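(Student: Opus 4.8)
The plan is to derive Theorem~\ref{teo_2} from the abstract maximal bound of Lemma~\ref{lem_2}, applied to the restriction of $X$ to $\textbf{T}_{a,b}$ equipped with the canonical pseudometric $\rho(\textbf{t},\textbf{s})=\left(E(X(\textbf{t})-X(\textbf{s}))^2\right)^{1/2}$. First I would verify the hypotheses of Lemma~\ref{lem_2}. By $(D_1)$ the process is centered Gaussian and separable with respect to $m$; since $(D_2)$ gives $\rho(\textbf{t},\textbf{s})\le C(\beta,\gamma)\,b^{\beta}\left(m(\textbf{t},\textbf{s})\right)^{\gamma}$ on $\textbf{T}_{a,b}$, $m$-convergence implies $\rho$-convergence, so a countable $m$-dense set is $\rho$-dense and $(\textbf{T}_{a,b},\rho)$ is separable with $X$ separable on it. The radius $a=\sup_{\textbf{t}\in\textbf{T}_{a,b}}\left(EX(\textbf{t})^2\right)^{1/2}$ is controlled at once: since $t_1\le b$, assumption $(D_3)$ gives $a\le C(\delta)b^{\delta}$, which reproduces the first term $\lambda^2(b^{\delta}C(\delta))^2/\left(2(1-\theta)^2\right)$ of $\widetilde{Q}$. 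It then remains only to bound the entropy integral $\int_0^{\theta a}\left(\log N_\textbf{T}(u)\right)^{1/2}du$.

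For the covering number I would pass from $\rho$ to the explicit metric $m$. By the comparison above, an $m$-ball of radius $\epsilon$ lies inside a $\rho$-ball of radius $C(\beta,\gamma)b^{\beta}\epsilon^{\gamma}$, hence $N_\textbf{T}\left(C(\beta,\gamma)b^{\beta}\epsilon^{\gamma}\right)\le N^m_\textbf{T}(\epsilon)$, the covering number of $\textbf{T}_{a,b}$ in the metric $m$. Since $\textbf{T}_{a,b}\subseteq[0,b]^2$ and $m$-balls are squares, a grid of side $2\epsilon$ gives $N^m_\textbf{T}(\epsilon)\le\left(b/(2\epsilon)\right)^2$ up to the rounding of the grid count. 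Writing $\epsilon(u)=\left(u/(C(\beta,\gamma)b^{\beta})\right)^{1/\gamma}$, this becomes
\[
N_\textbf{T}(u)\le \tfrac14\,(C(\beta,\gamma))^{2/\gamma}\,b^{2+2\beta/\gamma}\,u^{-2/\gamma}.
\]

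To integrate $\left(\log N_\textbf{T}(u)\right)^{1/2}$ I would linearize the logarithm via the elementary bound $\log x\le s^{-1}x^{s}$ (for $x\ge1$, $s>0$) with the decisive choice $s=\kappa/2$. This replaces $\left(\log N_\textbf{T}(u)\right)^{1/2}$ by a constant times $u^{-\kappa/(2\gamma)}$, whose integral over $(0,\theta a)$ converges exactly when $\kappa/(2\gamma)<1$, i.e. $\kappa<2\gamma$, and equals $(\theta a)^{1-\kappa/(2\gamma)}/\left(1-\kappa/(2\gamma)\right)$. Substituting $a\le C(\delta)b^{\delta}$ and collecting the powers of $b$ yields $\tfrac{\kappa}{2}+\tfrac{\beta\kappa}{2\gamma}+\delta\left(1-\tfrac{\kappa}{2\gamma}\right)=\delta+\kappa_1$, while the numerical factor $\left(\tfrac14\right)^{\kappa/4}(\kappa/2)^{-1/2}=2^{(1-\kappa)/2}\kappa^{-1/2}$ produces the $2^{(1-\kappa)/2}$ and the $\kappa^{-1/2}$, and the remaining constants assemble into $(C(\delta))^{1-\kappa/(2\gamma)}(C(\beta,\gamma))^{\kappa/(2\gamma)}/\left(1-\kappa/(2\gamma)\right)$; that is, $\int_0^{\theta a}\left(\log N_\textbf{T}(u)\right)^{1/2}du\le \kappa^{-1/2}C_2\,\theta^{1-\kappa/(2\gamma)}b^{\delta+\kappa_1}$. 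Inserting this into the exponent $\tfrac{2\lambda}{\theta(1-\theta)}\int_0^{\theta a}\left(\log N_\textbf{T}(u)\right)^{1/2}du$ of $Q(\lambda,\theta)$ produces exactly the second term $\tfrac{2\lambda}{1-\theta}b^{\delta+\kappa_1}C_2\theta^{-\kappa/(2\gamma)}\kappa^{-1/2}$ of $\widetilde{Q}$, and the theorem follows.

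I expect the genuine obstacle to be the constant bookkeeping in this last step rather than any conceptual difficulty: one must track the covering-number constant carefully (including the grid rounding and the fact that the power bound on $N^m_\textbf{T}(\epsilon)$ is only used where it exceeds $1$) so that the factor $2^{(1-\kappa)/2}$ emerges precisely, and one must notice that $s=\kappa/2$ is the unique linearization exponent delivering simultaneously the convergence threshold $\kappa<2\gamma$ and the sharp power $\kappa_1$. The secondary constraint $\kappa<1$ plays no role beyond keeping the constants in the normalized form in which $C_2$ is stated.
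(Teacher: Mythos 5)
Your strategy coincides with the paper's own proof: restrict $X$ to $\textbf{T}_{a,b}$, apply Lemma~\ref{lem_2} with the canonical pseudometric $\rho$, bound the radius via $(D_3)$ by $C(\delta)b^{\delta}$, convert $(D_2)$ into a covering-number bound, linearize the logarithm with exponent $\kappa/2$, and insert the resulting entropy estimate into $Q(\lambda,\theta)$. Your final entropy bound $\int_0^{\theta d}\left(\log N(u)\right)^{1/2}du\leq \kappa^{-1/2}C_2\,\theta^{1-\kappa/(2\gamma)}b^{\delta+\kappa_1}$, the exponent arithmetic giving $\delta+\kappa_1$, and the separability transfer from $m$ to $\rho$ all agree with the paper.

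There is, however, one step that is not valid as written, and it is exactly the point you flag: the covering-number bound without the additive $1$. Covering numbers satisfy $N_{\textbf{T}_{a,b}}(u)\geq 1$, while your claimed bound $\tfrac14 (C(\beta,\gamma))^{2/\gamma}b^{2+2\beta/\gamma}u^{-2/\gamma}$ falls below $1$ once $u$ exceeds a fixed threshold, so the bound is false on part of the integration range (and the grid count is $\lceil b/(2\epsilon)\rceil^2$, not $(b/(2\epsilon))^2$). The paper keeps the correct form $N_{\textbf{T}_{a,b}}(v)\leq \left(\tfrac{(C(\beta,\gamma))^{1/\gamma}b^{1+\beta/\gamma}}{2v^{1/\gamma}}+1\right)^2$ and then applies $\log(1+x)\leq \kappa^{-1}x^{\kappa}$, which holds for \emph{all} $x>0$ when $0<\kappa\leq 1$ (since $(1+x)^{\kappa}\leq 1+x^{\kappa}$); this single inequality absorbs the $+1$ at no cost and is precisely what makes the factor $2^{(1-\kappa)/2}\kappa^{-1/2}$, hence $C_2$, come out exactly. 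Note that the repair you sketch --- using the power bound only where it exceeds $1$ --- would \emph{not} recover $\widetilde{Q}$: on the region $x\geq 1$ you would need $N\leq(1+x)^2\leq 4x^2$, which inflates the main term by $2^{\kappa/2}$, and the region $x<1$ contributes an extra additive term since $\log N$ need not vanish there. So the bookkeeping obstacle you anticipate is real, and its intended resolution is the $\log(1+x)$ device, not a sharper grid count.
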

\begin{proof}
It follows from  (\ref{riv_e}) and  (\ref{riv_f}) that
\begin{equation}\label{riv_p}
d := \sup_{\textbf{t} \in \textbf{T}_{a,b}}\left(E|X(\textbf{t})|^2\right)^\frac{1}{2}\leq {C}(\delta)b^\delta
\end{equation}
and
\begin{equation}\label{riv_q}
\sup_{m(\textbf{t},\textbf{s}) \leq h, \textbf{t}, \textbf{s }\in \textbf{T}_{a,b}}\left(E(X(\textbf{t}) - X(\textbf{s}))^2\right)^\frac{1}{2} \leq \sigma(h) :={C}(\beta,\gamma) b^\beta h^\gamma.
\end{equation}
In turn, it follows from (\ref{riv_q}) that
\begin{equation}\label{riv_r}
N_{\textbf{T}_{a,b}}(v) \leq \left(\frac{b - a}{2\sigma^{(-1)}(v)} + 1\right)\left(\frac{b}{2\sigma^{(-1)}(v)} + 1\right) \leq \left(\frac{({C}(\beta,\gamma))^\frac{1}{\gamma}b^{1+\frac{\beta}{\gamma}}}{2v^\frac{1}{\gamma}} + 1\right)^2.
\end{equation}
Define $J\left(\theta d\right):= \int_0^{\theta d}\left(\log {N_{\textbf{T}_{a,b}}(u)}\right)^\frac{1}{2}d{u}.$ It follows from  (\ref{riv_r}) that
\begin{equation}\label{riv_s}
J\left(\theta d\right) \leq \int_0^{\theta d}\sqrt{2}\left[\log\left(\frac{({C}(\beta,\gamma))^\frac{1}{\gamma}b^{1+\frac{\beta}{\gamma}}}{2v^\frac{1}{\gamma}} + 1 \right)\right]^\frac{1}{2}dv.
\end{equation}
For any $0 < \kappa \leq 1$,
$$
\log(1 + x) =  \frac{1}{\kappa}\log(1 + x)^{\kappa} \leq \frac{x^{\kappa}}{\kappa}.
$$
Now, let $\kappa\in(0, 1\wedge2\gamma)$. Then it follows from \eqref{riv_p} and \eqref{riv_s} that
\begin{multline*}\label{riv_u}
J\left(\theta d\right) \leq  \frac{\sqrt{2}}{\kappa^{\frac{1}{2}}}\int_0^{\theta d}\frac{(({C}(\beta,\gamma))^\frac{1}{\gamma}b^{1+\frac{\beta}{\gamma}})^\frac{\kappa}{2}}
{(2v^\frac{1}{\gamma})^\frac{\kappa}{2}}dv \\
= \frac{\sqrt{2}}{{\kappa^\frac{1}{2}}(1-\frac{\kappa}{2\gamma})}
\left(\frac{({C}(\beta,\gamma))^\frac{1}{\gamma}b^{1+\frac{\beta}{\gamma}}}{2}\right)^\frac{\kappa}{2}(\theta d)^{1 - \frac{\kappa}{2\gamma}} \leq b^{\delta + \kappa_1}\frac{\theta^{1-\frac{\kappa}{2\gamma}}}{\kappa^\frac{1}{2}}C_2.
\end{multline*}
Separability of $X$ on  $(\mathbf{T},m)$ and  relation  (\ref{riv_q}) ensure separability of  $X$ on $(\mathbf{T},\rho)$ with $
\rho\left(\textbf{t},\textbf{s}\right) = \left(E(X(\textbf{t}) - X(\textbf{s}))^2\right)^{\frac{1}{2}}
$. Hence the statement of the theorem follows from Lemma  \ref{lem_2}.
\end{proof}

Now we are ready to state the general  result concerning the asymptotic maximal growth of a Gaussian process defined on $(\mathbf{T},m)$.
\begin{theorem}\label{teo2}
Let $X = \{X(\textbf{t}),\textbf{t}\in\mathbf{T} \}$  satisfy   assumptions $(D_1)-(D_3)$.
Suppose that  function $A(t)$ is chosen in such a way that  series $S(\delta)$ converges.
In  case when $1 + \frac{\beta}{\gamma} - \frac{\delta}{\gamma}>0$, assume additionally that there exists such $0 < {\kappa} < 1$ that
 series $S(\delta+\kappa_1)$ converges with
$
\kappa_1 = \frac{\kappa}{2}\left(1 + \frac{\beta}{\gamma} - \frac{\delta}{\gamma}\right).$

Then there exists such random variable  $\xi > 0$ that on any $\omega \in\Omega$ and for any $\textbf{t}\in\mathbf{T}$ $$|X(\textbf{t})| \leq A(t_1)\xi,$$
and $\xi$ satisfies the following assumption:

$(D_4)$  for any  $\varepsilon > (2C_1 + 1)^{\frac{2\gamma} {2\gamma+\kappa}}$
$$
P\{\xi > \varepsilon\} \leq 2\exp\left\{-\frac{\left(\varepsilon - \varepsilon^\frac{{\kappa}}{{2\gamma + \kappa}}(2C_1+1 )\right)^2}{2B_1^2}\right\}.
$$
Here the value of $\kappa<2\gamma$ is chosen to ensure the convergence of  series $S(\delta+\kappa_1)$  in  case when $1 + \frac{\beta}{\gamma} - \frac{\delta}{\gamma}>0$, and we set $\kappa
=\frac{1}{2}\wedge\gamma$ in  case when $1 + \frac{\beta}{\gamma} - \frac{\delta}{\gamma}\leq 0$.

\end{theorem}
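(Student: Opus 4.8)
My plan is to reduce the global statement to a slabwise application of Theorem~\ref{teo_2}, glued together by a H\"older inequality. Cover $\textbf{T}$ by the countable family of slabs $\textbf{T}_\ell:=\textbf{T}_{b_\ell,b_{\ell+1}}$, $\ell\ge0$, which is legitimate since $b_0=0$ and $b_\ell\to\infty$. Put $\eta_\ell:=\sup_{\textbf{t}\in\textbf{T}_\ell}|X(\textbf{t})|$ and $d_\ell:={C}(\delta)b_{\ell+1}^{\delta}$, and \emph{define} the candidate random variable
\[
\xi:=\sum_{\ell=0}^{\infty}\delta_\ell^{-1}\eta_\ell ,\qquad \delta_\ell=A(b_\ell).
\]
Because $A$ is increasing, any $\textbf{t}$ with $t_1\in[b_\ell,b_{\ell+1}]$ satisfies $A(t_1)\ge A(b_\ell)=\delta_\ell$, so on $\textbf{T}_\ell$ one has $|X(\textbf{t})|\le\eta_\ell\le\delta_\ell(\delta_\ell^{-1}\eta_\ell)\le\delta_\ell\xi\le A(t_1)\xi$. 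Hence the pathwise bound $|X(\textbf{t})|\le A(t_1)\xi$ holds on all of $\textbf{T}$ as soon as $\xi<\infty$ a.s., and the whole theorem reduces to controlling the law of $\xi$.

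Next I estimate the exponential moment of $\xi$. Theorem~\ref{teo_2} applied with $a=b_\ell$, $b=b_{\ell+1}$ gives, for the admissible $\kappa\in(0,1\wedge2\gamma)$ and any $\theta\in(0,1)$, $\mu>0$,
\[
E\exp\{\mu\eta_\ell\}\le 2\exp\Big\{\frac{\mu^2 d_\ell^2}{2(1-\theta)^2}+\mu w_\ell\Big\},\qquad w_\ell:=\frac{2{C}_2}{(1-\theta)\theta^{\kappa/(2\gamma)}\kappa^{1/2}}\,b_{\ell+1}^{\delta+\kappa_1}.
\]
I then bound $E\exp\{\lambda\xi\}=E\prod_\ell\exp\{\lambda\delta_\ell^{-1}\eta_\ell\}$ by the generalized H\"older inequality with exponents $p_\ell:=B_1\delta_\ell/d_\ell$. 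These are admissible: $\sum_\ell p_\ell^{-1}=B_1^{-1}\sum_\ell d_\ell/\delta_\ell=1$ since $B_1={C}(\delta)S(\delta)=\sum_\ell d_\ell/\delta_\ell$, and $p_\ell\ge1$ because $B_1\ge d_\ell/\delta_\ell$ term-by-term. Using the slabwise bound with $\mu=\lambda p_\ell/\delta_\ell$ and $\prod_\ell 2^{1/p_\ell}=2$ yields
\[
E\exp\{\lambda\xi\}\le 2\exp\Big\{\sum_{\ell}\Big(\frac{\lambda^2 p_\ell d_\ell^2}{2(1-\theta)^2\delta_\ell^2}+\frac{\lambda w_\ell}{\delta_\ell}\Big)\Big\}=2\exp\Big\{\frac{\lambda^2 B_1^2}{2(1-\theta)^2}+\frac{2\lambda {C}_1}{(1-\theta)\theta^{\kappa/(2\gamma)}}\Big\}.
\]
The key algebraic point is that $p_\ell\propto\delta_\ell/d_\ell$ is exactly the Cauchy--Schwarz optimizer of $\sum_\ell p_\ell d_\ell^2/\delta_\ell^2$ under $\sum_\ell p_\ell^{-1}=1$, whose minimum equals $(\sum_\ell d_\ell/\delta_\ell)^2=B_1^2$; simultaneously $\sum_\ell w_\ell/\delta_\ell=\frac{2}{(1-\theta)\theta^{\kappa/(2\gamma)}}{C}_1$ because $\sum_\ell b_{\ell+1}^{\delta+\kappa_1}\delta_\ell^{-1}=S(\delta+\kappa_1)$ and ${C}_1={C}_2\kappa^{-1/2}S(\delta+\kappa_1)$. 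Convergence of $S(\delta)$ and $S(\delta+\kappa_1)$ (automatic when $1+\beta/\gamma-\delta/\gamma\le0$ with $\kappa=\frac12\wedge\gamma$) makes $B_1,{C}_1$ finite, so $E\exp\{\lambda\xi\}<\infty$ and $\xi<\infty$ a.s.

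Finally I pass to the tail by Chernoff's inequality and an $\varepsilon$-dependent tuning of $\theta$. From the last display, $P\{\xi>\varepsilon\}\le 2\exp\{-\lambda\varepsilon+\frac{\lambda^2 B_1^2}{2(1-\theta)^2}+\frac{2\lambda {C}_1}{(1-\theta)\theta^{\kappa/(2\gamma)}}\}$. Choosing $\theta=\varepsilon^{-2\gamma/(2\gamma+\kappa)}$ makes $\theta^{-\kappa/(2\gamma)}=\varepsilon^{\kappa/(2\gamma+\kappa)}$ and $\theta\varepsilon=\varepsilon^{\kappa/(2\gamma+\kappa)}$, and then minimizing the quadratic in $\lambda>0$ gives
\[
P\{\xi>\varepsilon\}\le 2\exp\Big\{-\frac{\big((1-\theta)\varepsilon-2{C}_1\varepsilon^{\kappa/(2\gamma+\kappa)}\big)^2}{2B_1^2}\Big\}=2\exp\Big\{-\frac{\big(\varepsilon-\varepsilon^{\kappa/(2\gamma+\kappa)}(2{C}_1+1)\big)^2}{2B_1^2}\Big\},
\]
which is exactly $(D_4)$. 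The condition $\varepsilon>(2{C}_1+1)^{2\gamma/(2\gamma+\kappa)}$ is what guarantees $\theta\in(0,1)$ and a positive optimizing $\lambda$ (equivalently a positive numerator above), so that Chernoff applies. I expect the main obstacle to be precisely this gluing step: organizing the infinitely many slabwise estimates through H\"older with the Cauchy--Schwarz-optimal exponents so that the per-slab scales $d_\ell/\delta_\ell$ add up to $B_1$ rather than to the smaller root-sum-of-squares, together with the $\varepsilon$-dependent choice of $\theta$ that converts the growing per-slab shifts $w_\ell$ into the single correction $\varepsilon^{\kappa/(2\gamma+\kappa)}(2{C}_1+1)$. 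Verifying $p_\ell\ge1$, admissibility of $\theta$, and the positivity threshold on $\varepsilon$ are the accompanying bookkeeping details.
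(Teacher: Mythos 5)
Your proof is correct and takes essentially the same route as the paper: a slab-by-slab application of Theorem~\ref{teo_2}, gluing by generalized H\"older with exponents proportional to $\delta_\ell$ divided by the per-slab scale (the paper's $r_\ell = S(\delta)b_\ell^{-\delta}\delta_\ell$, your $p_\ell = B_1\delta_\ell/d_\ell$), followed by the identical Chernoff optimization and the substitution $\theta = \varepsilon^{-2\gamma/(2\gamma+\kappa)}$. The only deviations are cosmetic: you define $\xi$ as the dominating sum $\sum_\ell \delta_\ell^{-1}\eta_\ell$ rather than $\sup_{\mathbf{t}}|X(\mathbf{t})|/A(t_1)$, and you consistently use $b_{\ell+1}$ where the paper writes $b_\ell$ in $Q_\ell$, which is in fact the choice matching the definition of $S(\delta)$ and the statement of Theorem~\ref{teo_2}.
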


\begin{proof}
It is easy to check that
\begin{equation}\label{riv_v}
I := E\exp\left\{\lambda\sup_{\textbf{t} \in \mathbf{T}}\frac{|X(\textbf{t})|}{A(t_1)}\right\} \leq E\exp\left\{\lambda\sum_{\ell = 0}^{\infty}(\delta_\ell)^{-1}\sup_{t_1 \in (b_\ell,b_{\ell+1}) }|X(\textbf{t})|\right\}.
\end{equation}
Let  $\ell \geq 0,\,r_\ell > 1 $ be such integers that $\sum_{\ell = 0}^\infty \frac{1}{r_\ell} = 1$. Then it follows from (\ref{riv_v}), Theorem  \ref{teo_2} and H\"{o}lder inequality that for any $\theta\in(0,1)$ and $
 0 < \kappa < 1\wedge2\gamma
$
$$
I \leq \prod_{\ell = 0}^\infty\left(E\exp\left\{\lambda\frac{r_\ell}{\delta_\ell}\sup_{t_1 \in (b_\ell,b_{\ell+1}) }|X(\textbf{t})|\right\}\right)^\frac{1}{r_\ell} \leq \prod_{\ell = 0}^\infty(2Q_\ell(\lambda,\theta))^\frac{1}{r_\ell} = 2\prod_{\ell = 0}^\infty(Q_\ell(\lambda,\theta))^\frac{1}{r_\ell},
$$
where
\begin{equation*}\label{riv_w}
Q_\ell(\lambda,\theta) = \exp\left\{\frac{\lambda^2r_\ell^2}{2\delta_\ell^2}\frac{(b_\ell^{\delta}{C}(\delta))^2}{(1 - \theta)^2} + \frac{2\lambda r_\ell}{(1 - \theta)\delta_\ell}b_\ell^{\delta +\kappa_1}\frac{C_2}{\theta^\frac{\kappa}{2\gamma} \kappa^\frac{1}{2}}\right\}.
\end{equation*}

Therefore, if we take such value of $\kappa<2\gamma$ that  series $S(\delta+\kappa_1)$ converges in  case when $1 + \frac{\beta}{\gamma} - \frac{\delta}{\gamma}>0$ and set $\kappa=\frac{1}{2}\wedge\gamma$ in  case when $1 + \frac{\beta}{\gamma} - \frac{\delta}{\gamma}\leq 0$, we obtain
\begin{equation}\label{eq3.34}
I \leq 2\exp\left\{\frac{\lambda^2({C}(\delta))^2}{2(1 - \theta)^2}\sum_{\ell = 0}^\infty\frac{r_\ell(b_\ell^\delta)^2}{\delta_\ell^2} + \frac{2\lambda C_2\kappa^{-\frac12}S(\delta+\kappa_1)}{(1 - \theta)\theta^\frac{\kappa}{2\gamma}}\right\}
\end{equation}
Now we can substitute $r_\ell = S(\delta)b_\ell^{-\delta} \delta_\ell$
into \eqref{eq3.34}:
\begin{equation*}\label{riv_z}
I \leq 2\exp\left\{\frac{\lambda^2(S(\delta){C}(\delta))^2}{2(1 - \theta)^2}+ \frac{2\lambda C_2\kappa^{-\frac12}S(\delta+\kappa_1)}{(1 - \theta)\theta^\frac{\kappa}{2\gamma}}\right\}.
\end{equation*}
Therefore,
\begin{equation}\label{riv_aa}
E\exp\left\{\lambda\sup_{\textbf{t} \in \mathbf{T}}\frac{|X(\textbf{t})|}{A(t_1)}\right\} \leq 2\exp\left\{\frac{\lambda^2}{2}\hat{B}^2 + 2\lambda \hat{C}\right\},
\end{equation}
where
$$
\hat{B}  = \frac{S(\delta){C}(\delta)}{1 - \theta} \quad\text{and}\quad
\hat{C} = \frac{C_2\kappa^{-\frac12}S(\delta+\kappa_1)}{(1 - \theta)\theta^\frac{\kappa}{2\gamma}}.
$$
It follows immediately from \eqref{riv_aa} that for any $\lambda > 0, \ \varepsilon >0$
\begin{multline}\label{eq3.35}
P\left\{\sup_{\textbf{t} \in \mathbf{T}}\frac{|X(\textbf{t})|}{A(t_1)} > \varepsilon\right\} \leq {\exp\{-\lambda\varepsilon\}}{E\exp\left\{\lambda \sup_{\textbf{t} \in \mathbf{T}}\limits\frac{|X(\textbf{t})|}{A(t_1)}\right\}} \leq\\
\leq 2\exp\left\{\frac{\lambda^2}{2}\hat{B}^2 + 2\lambda \hat{C} - \lambda\varepsilon\right\}.
\end{multline}
If we minimize the right-hand side of \eqref{eq3.35} w.r.t.  $\lambda$ then we obtain that for any  $\varepsilon > 2\hat{C}$
\begin{multline}\label{riv_ab}
P\left\{\sup_{\textbf{t} \in \mathbf{T}}\frac{|X(\textbf{t})|}{A(t_1)} > \varepsilon\right\} \leq 2\exp\left\{-\frac{(\varepsilon-2\hat{C})^2}{2\hat{B}^2}\right\}\\
 = 2\exp\left\{-\frac{(\varepsilon(1 -\theta) - 2\theta^{-\frac{\kappa}{2\gamma}}C_1)^2}{2B_1^2}\right\}.
\end{multline}
Finally, we can insert   $\theta = \varepsilon^{-\frac{2\gamma}{2\gamma+\kappa}}$ into \eqref{riv_ab} and  derive that for  $\varepsilon > (2C_1 + 1)^\frac{2\gamma}{2\gamma+\kappa}$
$$
P\left\{\sup_{t \in \mathbf{T}}\frac{|X(\mathbf{t})|}{A(t_1)} > \varepsilon\right\} \leq 2\exp\left\{-\frac{(\varepsilon - \varepsilon^\frac{\kappa}{\kappa + 2\gamma}(1 + 2C_1))^2}{2B_1^2}\right\}.
$$
Denote $\xi := \sup_{\textbf{t} \in \mathbf{T}}\limits\frac{|X(\textbf{t})|}{A(t_1)}$. Then $\xi$ satisfies  assumption $(D_4)$, and on any $\omega\in\Omega$
$$
X(\textbf{t}) \leq  A(t_1)\xi,
$$
which concludes the proof.
\end{proof}
\begin{theorem}\label{teo4}
Let $ 0 <{H}<{1}, 1-H<\alpha<1,\;\mathbf{T}=\{\mathbf{t}=(t_1, t_2),0 \leq{{t_2}}<{{t_1}}\}$,
$$
X(\textbf{t}) = \frac{B_{t_1}^H - B_{t_2}^H}{({t_1} - {t_2})^{1 - \alpha}} + \int_{{t_2}}^{{t_1}}\frac{B_u^H - B_{t_2}^H}{(u - {t_2})^{2 - \alpha}}du.
$$

Then for any $p>1$ there exists  random variable $\xi=\xi(p)$ such that for any $\mathbf{t}\in \mathbf{T}$

$$|X(\textbf{t})|\leq ((t_1^{H+\alpha-1}(\log(t_1))^{p})\vee1)\xi(p),$$ where $\xi(p) $ satisfies  assumption $(D_4)$ with some constants $B_1$ and $C_1$.
\end{theorem}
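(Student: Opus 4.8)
The plan is to apply Theorem \ref{teo2} to the Gaussian field $X$, so the whole argument reduces to verifying $(D_1)$–$(D_3)$ with explicit exponents and then choosing the majorant $A$ and the sequence $b_\ell$ so that the relevant series converge. First I would observe that, up to the constant factor $\frac{e^{-i\pi\alpha}}{\Gamma(\alpha)}(1-\alpha)$, the field $X(\mathbf{t})$ coincides with the fractional derivative $(\mathcal{D}_{t_1-}^{1-\alpha}B_{t_1-}^H)(t_2)$, hence is a centered Gaussian field depending linearly on the fBm path; a separable modification exists since the kernels are continuous off the diagonal $t_2=t_1$ and integrable at it, which gives $(D_1)$.

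The key structural fact is self-similarity. Using $B_{ct}^H\overset{d}{=}c^H B_t^H$ and the substitution $u=cv$ in the integral, one checks that $X(ct_1,ct_2)\overset{d}{=}c^{H+\alpha-1}X(t_1,t_2)$ for every $c>0$, i.e.\ $X$ is self-similar of index $\delta:=H+\alpha-1$, which is positive because $\alpha>1-H$. Consequently $EX(\mathbf{t})^2=t_1^{2\delta}EX(1,t_2/t_1)^2$, and since the variance of each term stays bounded as $t_2/t_1\uparrow1$ (the first term has variance $(t_1-t_2)^{2\delta}\to0$, and the singular integral is controlled analogously), one gets $\sup_{0\le r<1}EX(1,r)^2<\infty$. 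This yields $(D_3)$ with $\delta=H+\alpha-1$ and $C(\delta)=(\sup_{0\le r<1}EX(1,r)^2)^{1/2}$; note that this exponent is exactly the power of $t_1$ appearing in the claimed bound.

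For $(D_2)$ I would first establish a local H\"older estimate: on $t_1\vee s_1\le1$,
\[
\left(E(X(\mathbf{t})-X(\mathbf{s}))^2\right)^{1/2}\le C\,m(\mathbf{t},\mathbf{s})^{\gamma}
\]
for some $\gamma\in(0,\delta)$. Splitting $X=X_1+X_2$ with $X_1(\mathbf{t})=(B_{t_1}^H-B_{t_2}^H)(t_1-t_2)^{\alpha-1}$ and $X_2$ the singular integral, I would bound the increments of $X_1$ and $X_2$ separately in the $t_1$-direction and the $t_2$-direction, computing the $L^2$-norms from property (c) of Definition \ref{def2.1} alone, and then combine by the triangle inequality. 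By self-similarity, rescaling $\mathbf{t},\mathbf{s}$ by $M=t_1\vee s_1$ promotes this to the global bound $(D_2)$ with the same $\gamma$ and $\beta=\delta-\gamma>0$, so that $\beta+\gamma=\delta$. This is the main obstacle: the kernel $(u-t_2)^{-(2-\alpha)}$ is singular at the base point, and the endpoint and base-point increments must be estimated near the diagonal $t_2\approx t_1$, where further singularities appear; the delicate point is to obtain a \emph{scale-consistent} exponent pair $\beta+\gamma=\delta$ rather than a lossy one.

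Finally, the relation $\beta+\gamma=\delta$ gives $1+\frac{\beta}{\gamma}-\frac{\delta}{\gamma}=0$, so Theorem \ref{teo2} applies in its case $1+\frac{\beta}{\gamma}-\frac{\delta}{\gamma}\le0$, where one sets $\kappa=\frac12\wedge\gamma$, $\kappa_1=0$, and only $S(\delta)$ must converge. Choosing the majorant $A(t)=(t^{H+\alpha-1}(\log t)^p)\vee1$ — increasing to infinity and exceeding $1$ — together with a geometric sequence $b_\ell\sim q^\ell$ ($q>1$, with $b_0=0$ treated separately), one has $\delta_\ell=A(b_\ell)\sim b_\ell^{\delta}(\log b_\ell)^p$, whence $S(\delta)=\sum_\ell b_{\ell+1}^{\delta}\delta_\ell^{-1}\sim\sum_\ell(\ell\log q)^{-p}$, which converges precisely because $p>1$; this is exactly where the hypothesis $p>1$ enters. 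Theorem \ref{teo2} then produces a random variable $\xi=\xi(p)$ satisfying $(D_4)$ with the stated $B_1,C_1$ and $|X(\mathbf{t})|\le A(t_1)\xi(p)$ for all $\mathbf{t}\in\mathbf{T}$, which is the assertion.
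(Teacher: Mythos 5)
Your overall skeleton is the same as the paper's: verify $(D_1)$--$(D_3)$ for the field $X$, observe that the exponents satisfy $1+\frac{\beta}{\gamma}-\frac{\delta}{\gamma}=0$ so that $\kappa_1=0$ and only $S(\delta)$ matters, take $A(t)=(t^{H+\alpha-1}(\log t)^p)\vee1$ with geometric $b_\ell$ (the paper uses $b_\ell=e^\ell$), and note $S(\delta)\asymp\sum_\ell \ell^{-p}<\infty$ precisely because $p>1$. That endgame is carried out correctly and coincides with the paper's. Your self-similarity reduction is a genuinely different and attractive organizational idea: since $X$ is a linear functional of the fBm path, $\{X(c\mathbf{t})\}\overset{d}{=}\{c^{H+\alpha-1}X(\mathbf{t})\}$, which yields $(D_3)$ at once and promotes any local bound $(E(X(\mathbf{t})-X(\mathbf{s}))^2)^{1/2}\le C\,m(\mathbf{t},\mathbf{s})^{\gamma}$ on $t_1\vee s_1\le1$ to a global $(D_2)$ bound with $\beta=\delta-\gamma$, automatically scale-consistent; the paper instead proves the global two-parameter estimates directly, with the factor $(t_1\vee s_1)^{\varepsilon}$ (or $(t_1\vee s_1)^{H+\alpha-3/2}$) emerging from the computations.

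However, there is a genuine gap: the local H\"{o}lder estimate itself --- the analytic heart of the theorem --- is never proved; you describe a plan and yourself label it ``the main obstacle.'' This estimate is exactly what occupies the bulk of the paper's appendix, namely Lemma \ref{lemGaus1} together with the increment bounds $I_3$--$I_{11}$ in Lemma \ref{teo1}, and it cannot be waved through: near the diagonal a plain triangle inequality fails. For instance, if $t_1-t_2\asymp t_1-s_2\asymp h$ while $|t_2-s_2|\asymp h^2$, the two terms $\frac{B^H_{t_1}-B^H_{t_2}}{(t_1-t_2)^{1-\alpha}}$ and $\frac{B^H_{t_1}-B^H_{s_2}}{(t_1-s_2)^{1-\alpha}}$ each have $L^2$-norm of order $h^{\delta}$ with $\delta=H+\alpha-1$, whereas the needed bound is of order $h^{2\delta}$, so a cancellation must be exhibited. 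The paper does this by computing the covariance exactly and invoking the inequality
$$
z_1^{2\delta}+z_2^{2\delta}+\frac{|z_2-z_1|^{2H}-z_1^{2H}-z_2^{2H}}{(z_1z_2)^{1-\alpha}}\le C(H,\alpha)|z_2-z_1|^{2\delta}
$$
of Lemma \ref{lemGaus1}; alternatively one can add and subtract $\frac{B^H_{t_1}-B^H_{s_2}}{(t_1-t_2)^{1-\alpha}}$ and treat the regimes $|t_2-s_2|\le t_1-t_2$ and $|t_2-s_2|>t_1-t_2$ separately, and for the singular-integral part one must split the integration at distance $|t_2-s_2|$ from the base point to avoid a divergent integral (this is where the paper's $\varepsilon$-loss and the saturation at exponent $\tfrac12$ when $H+\alpha>\tfrac32$ arise). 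None of these estimates, in either form, appear in your text, so as written the proposal assumes the key inequality of the theorem rather than proving it.
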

The proof of Theorem \ref{teo4} is of a technical nature and therefore it is placed in Appendix.
 \section{Main results}
 \subsection{General results on strong consistency} In this section we shall establish  conditions for strong consistency of $\theta^{(2)}_T$ and $\theta^{(3)}_T$.
 \begin{theorem}\label{thma3.1} Let   assumptions $(A_1)$, $(A_3)$, $(A'_2)$, $(A'_4)$ $(B_1)$ and $(B_2)$  hold and let   function $\varphi$ satisfy the following assumption:
 \begin{enumerate} \item [$(B_4)$] There exists such $\alpha>1-H$ and $p>1$ that
  \begin{eqnarray}\label{exa2.2}\frac{T^{H+\alpha-1}(\log T)^p\int_0^T|(\mathcal{D}_{0+}^{\alpha}\varphi)(s)|ds}{\int_0^T \varphi^2_sds}\rightarrow0\quad\text{a.s. as}\quad  T\rightarrow\infty.
     \end{eqnarray}
     \end{enumerate}

 Then  estimate $\theta^{(2)}_T$ is   correctly defined and   strongly consistent as $T\rightarrow\infty$.
 \end{theorem}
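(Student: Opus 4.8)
The plan is to read off from \eqref{teta2} the stochastic representation
$\theta^{(2)}_T-\theta=\big(\int_0^T\varphi_s\,dB^H_s\big)\big/\big(\int_0^T\varphi_s^2\,ds\big)$ and to show that this ratio tends to $0$ almost surely. First I would dispose of \emph{correctly defined}: under $(B_1)$ together with $(A'_2)$ and $(A_3)$ the solution $X$ is a.s.\ continuous and $b(s,X_s)\neq0$, so $\varphi(s)=a(s,X_s)/b(s,X_s)$ is a.s.\ continuous, hence locally bounded and square integrable; thus the denominator $\int_0^T\varphi_s^2\,ds$ is finite and (being the denominator in \eqref{exa2.2}) positive for large $T$. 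The numerator is the generalized Lebesgue--Stieltjes integral of Definition \ref{def2.3}; its existence follows because $\mathcal{D}_{T-}^{1-\alpha}B_{T-}^H\in L_\infty[0,T]$ for $\alpha\in(1-H,1)$ (recalled before Definition \ref{def2.3}) and because $(B_4)$ forces $\int_0^T|(\mathcal{D}_{0+}^{\alpha}\varphi)(s)|\,ds<\infty$, i.e.\ $\mathcal{D}_{0+}^{\alpha}\varphi\in L_1[0,T]$.

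The heart of the argument is the bound on the numerator via the pathwise estimate \eqref{eq2.2} with $a=0,\ b=T$:
$$\Big|\int_0^T\varphi_s\,dB^H_s\Big|\le \sup_{0\le x\le T}|(\mathcal{D}_{T-}^{1-\alpha}B_{T-}^H)(x)|\int_0^T|(\mathcal{D}_{0+}^{\alpha}\varphi)(s)|\,ds.$$
It then remains to control the supremum of the fractional derivative, which is exactly the content of Theorem \ref{teo4}: with $t_1=T,\ t_2=x$, the value $\frac{\Gamma(\alpha)}{e^{-i\pi\alpha}}(\mathcal{D}_{T-}^{1-\alpha}B_{T-}^H)(x)$ equals the process $X(\mathbf t)$ of Theorem \ref{teo4} except that the integral term carries the bounded weight $(1-\alpha)\in(0,1)$. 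Since this weight alters only the constants in the scaling hypotheses $(D_2)$--$(D_3)$, the conclusion of Theorem \ref{teo4} (equivalently Theorem \ref{teo2}) applies to the fractional-derivative process as well, giving an a.s.\ finite $\xi=\xi(p)$ with
$$\sup_{0\le x\le T}|(\mathcal{D}_{T-}^{1-\alpha}B_{T-}^H)(x)|\le C\,\big(T^{H+\alpha-1}(\log T)^p\vee1\big)\,\xi(p).$$
Here the supremum over $x=t_2$ is harmless because the right-hand bound of Theorem \ref{teo4} depends on $t_1=T$ only, and $\xi(p)<\infty$ a.s.\ by the tail estimate $(D_4)$; moreover, as $\alpha>1-H$ gives $H+\alpha-1>0$, the factor $T^{H+\alpha-1}(\log T)^p\to\infty$ and the $\vee1$ is irrelevant for large $T$.

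Combining the two displays and dividing by the denominator yields
$$\Big|\theta^{(2)}_T-\theta\Big|\le C\,\xi(p)\,\frac{\big(T^{H+\alpha-1}(\log T)^p\vee1\big)\int_0^T|(\mathcal{D}_{0+}^{\alpha}\varphi)(s)|\,ds}{\int_0^T\varphi_s^2\,ds}.$$
The fraction tends to $0$ a.s.\ by $(B_4)$, and since $\xi(p)<\infty$ a.s.\ the entire right-hand side tends to $0$ a.s., which is the asserted strong consistency. The main obstacle is the middle step---transferring the asymptotic growth of the fractional derivative of fBm into the numerator bound---but the hard analytic work has already been packaged into Theorem \ref{teo4}; what is genuinely left is the identification of the fractional derivative with the process $X$ and the bookkeeping of the constant $e^{-i\pi\alpha}/\Gamma(\alpha)$ and the $(1-\alpha)$ weight. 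A secondary point to verify is that the exponent $\alpha$ in the pathwise integral, in Theorem \ref{teo4}, and in $(B_4)$ may be taken identical: this is legitimate because $(B_4)$ only asserts the existence of some pair $\alpha>1-H,\ p>1$, and both \eqref{eq2.1} and Theorem \ref{teo4} hold for every such pair.
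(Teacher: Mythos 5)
Your proposal is correct and follows essentially the same route as the paper's own proof: the stochastic representation from \eqref{teta2}, the pathwise bound \eqref{eq2.2}, the growth estimate of Theorem \ref{teo4} for the fractional derivative of $B^H$, and then condition $(B_4)$ to conclude. In fact you are somewhat more careful than the paper, which silently identifies $(\mathcal{D}_{T-}^{1-\alpha}B_{T-}^H)(t)$ with the process $X(\mathbf{t})$ of Theorem \ref{teo4}; your observations about the constant $e^{-i\pi\alpha}/\Gamma(\alpha)$, the $(1-\alpha)$ weight (harmless since the verification of $(D_2)$--$(D_3)$ in Lemma \ref{teo1} proceeds term by term via Minkowski), the $\vee 1$, and the well-definedness of the estimate are all correct refinements of the same argument.
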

 \begin{proof} We must prove that $\frac{\int_0^T\varphi_sdB^H_s}{\int_0^T\varphi^2_sds}\rightarrow0$ a.s. as $T\rightarrow\infty.$ According to \eqref{eq2.2}, $$\Big|\int_0^T\varphi_s dB^H_s\Big|\leq\sup_{0\leq t\leq T}|(\mathcal{D}_{T-}^{1-\alpha}B_{T-}^H)(t)|\int_0^T|(\mathcal{D}_{0+}^{\alpha}\varphi)(s)|ds.$$
 Furthermore, according to  Theorem \ref{teo4}, for any $p>1$ there exists a random variable $\xi=\xi(p)$ independent of $T$   such that for any $T>0$ $$\sup_{0\leq t\leq T}|(\mathcal{D}_{T-}^{1-\alpha}B_{T-}^H)(t)|\leq \xi(p) T^{H+\alpha-1}(\log T)^p,$$
 which concludes the proof.
 \end{proof}
 Relation \eqref{exa2.2} ensures  convergence  $\frac{\int_0^T\varphi_sdB^H_s}{\int_0^T\varphi^2_sds}\rightarrow 0$ a.s. in the general case.
  In a particular case when  function $\varphi$ is non-random and  integral $\int_0^T\varphi_sdB^H_s$
   is a Wiener integral w.r.t. the fractional Brownian motion,  conditions for existence of this integral
    are simpler since assumption \eqref{exa2.2}  can be  simplified.

  \begin{theorem} \label{theorem33.2} Let   assumptions $(A_1)$, $(A_3)$, $(A'_2)$, $(A'_4)$ $(B_1)$ and $(B_2)$  hold and let function $\varphi$ be non-random and satisfy the following assumption:
  \begin{enumerate} \item [$(B_5)$]
 There exists  such $p>0$ that
 \begin{equation*}\lim \sup_{T\rightarrow\infty} \frac{T^{2H-1+p}}{\int_0^T\varphi^2(t)dt}<\infty.
 \end{equation*}
\end{enumerate}
 Then  estimate $\theta^{(2)}_T$ is      strongly consistent as $T\rightarrow\infty$.
   \end{theorem}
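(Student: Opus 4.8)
The plan is to reduce the claim to the almost sure convergence $\eta_T/V_T\to 0$, where $\eta_T:=\int_0^T\varphi(s)\,dB_s^H$ and $V_T:=\int_0^T\varphi^2(s)\,ds$, and then to exploit that, for deterministic $\varphi$, the family $\{\eta_T\}_{T\ge0}$ is a centered Gaussian process with explicit covariance. First I would record the variance bound
\[
E\eta_T^2 = H(2H-1)\int_0^T\!\!\int_0^T \varphi(s)\varphi(u)|s-u|^{2H-2}\,ds\,du \le c_H\,T^{2H-1}V_T,
\]
which follows from the Hardy--Littlewood--Sobolev inequality (producing the norm $\|\varphi\|_{L^{1/H}[0,T]}^2$) combined with H\"older's inequality, valid since $1/H<2$ for $H>\frac12$. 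This variance estimate replaces the pathwise bound \eqref{eq2.2} used in Theorem \ref{thma3.1}, and it is precisely the reason the deterministic case needs only control of $\int_0^T\varphi^2$ rather than of $\int_0^T|(\mathcal D_{0+}^{\alpha}\varphi)(s)|\,ds$.

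Next I would invoke $(B_5)$, which gives $V_T\ge c\,T^{2H-1+p}$ for all large $T$ (in particular $V_\infty=\infty$, so that $\theta^{(2)}_T$ is well defined). Together with the variance bound this yields
\[
E\Big(\frac{\eta_T}{V_T}\Big)^2 \le c_H\,\frac{T^{2H-1}}{V_T} \le C\,T^{-p}\to 0,
\]
i.e. $L^2$--convergence at a polynomial rate, and it is this rate that will drive the Borel--Cantelli step below.

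To upgrade to almost sure convergence I would not argue along the natural time scale (a martingale strong law is unavailable, since $\eta_T$ is not a martingale), but along a subsequence adapted to $V$: choose $T_n$ with $V_{T_n}=\rho^n$ for a fixed $\rho>1$, which is possible because $V$ is continuous, increasing and unbounded. On this subsequence $E(\eta_{T_n}/V_{T_n})^2\le c_H T_n^{2H-1}/V_{T_n}$, and $(B_5)$ bounds $T_n\le C\rho^{n/(2H-1+p)}$, so this variance decays geometrically; Gaussian tail estimates and Borel--Cantelli then give $\eta_{T_n}/V_{T_n}\to 0$ a.s. For the oscillation on $[T_n,T_{n+1}]$ I would split
\[
\frac{\eta_T}{V_T}-\frac{\eta_{T_n}}{V_{T_n}} = \frac{\eta_T-\eta_{T_n}}{V_T} + \eta_{T_n}\Big(\frac1{V_T}-\frac1{V_{T_n}}\Big),
\]
where monotonicity of $V$ makes the second term dominated by $|\eta_{T_n}/V_{T_n}|\to0$, while the first is at most $V_{T_n}^{-1}\sup_{T_n\le T\le T_{n+1}}|\eta_T-\eta_{T_n}|$; the supremum of this centered Gaussian increment process is controlled by a Gaussian maximal inequality (Lemma \ref{lem_2}), its variance being at most $c_H(T_{n+1}-T_n)^{2H-1}(V_{T_{n+1}}-V_{T_n})$.

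The main obstacle is exactly this last oscillation estimate. Because $V_T$ may grow arbitrarily fast, a subsequence tied to $T$ would fail; the choice $V_{T_n}=\rho^n$ keeps $V_{T_{n+1}}-V_{T_n}=(\rho-1)\rho^n$ comparable to $V_{T_n}$, while the remaining factor $(T_{n+1}-T_n)^{2H-1}$ is again tamed through $(B_5)$ via $T_{n+1}\le C\rho^{(n+1)/(2H-1+p)}$. A short computation then shows that $V_{T_n}^{-2}(T_{n+1}-T_n)^{2H-1}(V_{T_{n+1}}-V_{T_n})$ decays geometrically in $n$ (the exponent of $\rho$ is $-np/(2H-1+p)+O(1)$, and the logarithmic factor from the maximal inequality is harmless), so Borel--Cantelli applies and the oscillation tends to $0$ almost surely. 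Assembling the subsequential limit with the oscillation bound gives $\eta_T/V_T\to0$ a.s., which is the asserted strong consistency of $\theta^{(2)}_T$.
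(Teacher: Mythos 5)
Your argument is correct, and its starting point coincides with the paper's: both rest on the moment bound $E\big|\int_0^T\varphi\,dB^H_s\big|^r\le C(H,r)\|\varphi\|^r_{L_{1/H}[0,T]}\le C(H,r)\|\varphi\|^r_{L_2[0,T]}T^{(H-\frac12)r}$ from \cite{mmv} plus H\"older (your Hardy--Littlewood--Sobolev estimate is exactly the $r=2$ case of this). Where you genuinely diverge is the discretization used for Borel--Cantelli. The paper works along integer times: Chebyshev with a high moment $r$ (first $r>2/p$, then again $r$ large for the residual) gives $P(F_N>\varepsilon)\le CN^{-rp/2}$, and the oscillation over $[N,N+1]$ is handled by the maximal inequality for Wiener integrals w.r.t.\ fBm (Theorem 1.10.3 of \cite{mishura}), producing the ratio $\|\varphi\|^r_{L_2[N,N+1]}/\|\varphi\|^{2r}_{L_2[0,N]}$. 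You instead discretize along the intrinsic clock, $V_{T_n}=\rho^n$, which buys two things: second moments suffice (the relevant variance ratios already decay geometrically in $n$, so plain Chebyshev at $r=2$ plus Borel--Cantelli closes both the subsequence step and the oscillation step), and the increment $V_{T_{n+1}}-V_{T_n}=(\rho-1)\rho^n$ is automatically comparable to $V_{T_n}$. The latter is a genuine gain in robustness: the paper's chain of inequalities tacitly uses $\|\varphi\|_{L_2[N,N+1]}\le\|\varphi\|_{L_2[0,N]}$, which can fail for very rapidly growing $\varphi$ --- precisely the scenario your $V$-adapted partition neutralizes. The price is that your maximal-inequality step needs more than the pointwise variance bound you state: to invoke Lemma \ref{lem_2} for $\sup_{T\in[T_n,T_{n+1}]}|\eta_T-\eta_{T_n}|$ you must check the entropy hypothesis, which does hold here (the canonical pseudometric is dominated by $C(T_{n+1}-T_n)^{H-\frac12}(V_T-V_S)^{\frac12}$, so after the time change $T\mapsto V_T$ the covering numbers are of order $u^{-2}$ and the entropy integral is finite, with no logarithmic loss at all), but this should be said explicitly; likewise, since $V$ need not be strictly increasing, one should take $T_n=\inf\{T:V_T\ge\rho^n\}$ and use continuity of $V$, and recall that $(B_5)$ yields $V_T\ge cT^{2H-1+p}$ only for large $T$. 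These are presentational, not structural, gaps.
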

 \begin{proof}It follows from \cite{mmv} and the H\"{o}lder inequality that for any $r>0$ $$E\Big|\int_0^T \varphi(s)dB_s^H\Big|^r\leq C(H,r)||\varphi||^r_{L_{\frac1H}[0,T]}\leq C(H,r)||\varphi||^{r}_{L_{2}[0,T]}T^{(H-\frac12)r}.$$
 Denote $F_T=\frac{|\int_0^T\varphi(t)dB_t^H|}{\int_0^T\varphi^2(t)dt}$. Also, for any $N>1$ and any $\varepsilon>0$ define  event $A_N=\Big\{F_N>\varepsilon\Big\}$. Then \begin{equation*}P(A_N)\leq \varepsilon^{-r}\frac{E|\int_0^N \varphi(s)dB_s^H|^r}{(\int_0^N\varphi^2(t)dt)^r}\leq C(H,r)\frac{||\varphi||^r_{L_{\frac1H}[0,N]}}{||\varphi||^{2r}_{L_{2}[0,N]}}\leq C(H,r)\frac{N^{(H-\frac12)r}}{||\varphi||^{r}_{L_{2}[0,N]}}.\end{equation*} Under condition $(B_5)$ we have $P(A_N)\leq C(H,r,p)N^{-\frac{rp}{2}}$. If $r>\frac{2}{p}$, then it follows immediately from the Borel-Cantelli  lemma that  series $\sum P(A_N)$ converges, whence $F_N\rightarrow 0$ a.s. as $N\rightarrow\infty$. Now estimate the residual $$R_N=\sup_{T\in[N,N+1]}\Big|F_T-
F_N|.$$
Evidently,
$$R_N\leq\sup_{T\in[N,N+1]}\Big|\frac{\int_N^T\varphi(t)dB_t^H}{\int_0^T\varphi^2(t)dt}\Big|+F_N,$$
and it is sufficient to estimate $$R_N^1=\sup_{T\in[N,N+1]}\Big|\frac{\int_N^T\varphi(t)dB_t^H}{\int_0^T\varphi^2(t)dt}\Big|\leq \frac{\sup_{T\in[N,N+1]}\Big|\int_N^T\varphi(t)dB_t^H\Big|}{\int_0^N\varphi^2(t)dt}:=R_N^2.$$ According to Theorem 1.10.3 from \cite{mishura} and the H\"{o}lder inequality, $$E\Big(\sup_{T\in[N,N+1]}\Big|\int_N^T\varphi(t)dB_t^H\Big|\Big)^r\leq C(H,r)||\varphi||^r_{L_{\frac1H}[N,N+1]}\leq C(H,r)||\varphi||^r_{L_{2}[N,N+1]}.$$ Now we can use condition $(B_5)$ to conclude  that for any $\varepsilon>0$ \begin{equation*}\begin{gathered}P(R_N^2>\varepsilon)\leq C(H,r)\varepsilon^{-r}\frac{||\varphi||^r_{L_{2}[N,N+1]}}{||\varphi||^{2r}_{L_{2}[0,N]}}\\\leq C(H,r)\varepsilon^{-r}{||\varphi||^{-r}_{L_{2}[0,N]}}\leq C(H,r)\varepsilon^{-r}N^{-r(2H-1+p)}.\end{gathered}\end{equation*}
We can set $r>\frac{1}{2H-1+p}$ and apply  the Borel-Cantelli lemma again. Then we obtain that $R_N^2\rightarrow 0$ a.s. as $N\rightarrow 0$, which means that $\theta_T^{(2)}$ is strongly consistent. \end{proof}
 \begin{theorem} \label{theorem2.1} Let  assumptions $(C_1)$ and $(C_2)$ hold, and, in addition,
 \begin{enumerate} \item [$(C_3)$] $\int_0^T\varphi_1^2(s)ds=\infty$ a.s.
 \item [$(C_4)$] There exist  such $\alpha>1-H$ and $p>1$ that
  \begin{eqnarray}\label{exa2.3}\frac{T^{H+\alpha-1}(\log T)^p\int_0^T|(\mathcal{D}_{0+}^{\alpha}\vartheta)(s)|ds}{\int_0^T \varphi^2_{1}(s)ds}\rightarrow0\quad\text{a.s. as}\quad  T\rightarrow\infty.
     \end{eqnarray}
 \end{enumerate}

 Then  estimate ${\theta}_T^{(3)}$ is strongly consistent as $T\rightarrow\infty$.
\end{theorem}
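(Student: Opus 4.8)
The plan is to work directly from the stochastic representation \eqref{eq.teta3}, according to which
$${\theta}_T^{(3)}-\theta=\frac{\int_0^T\vartheta(s)dB^H_s}{\int_0^T\varphi_1^2(s)ds}+\frac{\int_0^T\varphi_1(s)dW_s}{\int_0^T\varphi_1^2(s)ds},$$
so that it suffices to prove that each of the two summands on the right converges to $0$ a.s. as $T\to\infty$. I would first record that, by $(C_3)$, the denominator $\int_0^T\varphi_1^2(s)ds$ increases to $+\infty$ a.s.; hence it is a.s. positive for all sufficiently large $T$ and the estimate is correctly defined.

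The fractional term is handled exactly as in the proof of Theorem \ref{thma3.1}, now applied to the integrand $\vartheta$ rather than $\varphi$. Applying the pathwise bound \eqref{eq2.2} with $f=\vartheta$ gives
$$\Big|\int_0^T\vartheta(s)dB^H_s\Big|\le\sup_{0\le t\le T}|(\mathcal{D}_{T-}^{1-\alpha}B_{T-}^H)(t)|\int_0^T|(\mathcal{D}_{0+}^{\alpha}\vartheta)(s)|ds,$$
and Theorem \ref{teo4} supplies a random variable $\xi(p)$, independent of $T$, with $\sup_{0\le t\le T}|(\mathcal{D}_{T-}^{1-\alpha}B_{T-}^H)(t)|\le\xi(p)\,T^{H+\alpha-1}(\log T)^p$ for the same $\alpha>1-H$ and $p>1$ fixed in $(C_4)$. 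Dividing by $\int_0^T\varphi_1^2(s)ds$ and invoking \eqref{exa2.3} then forces the first summand to $0$ a.s.

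For the Wiener term I would exploit the martingale structure instead. The It\^o integral $M_T:=\int_0^T\varphi_1(s)dW_s$ is a continuous local martingale whose predictable quadratic variation is precisely the denominator, $\langle M\rangle_T=\int_0^T\varphi_1^2(s)ds$; under $(C_2)$ it is in fact a square-integrable martingale on each $[0,T]$. By $(C_3)$ we have $\langle M\rangle_\infty=\infty$ a.s., so the strong law of large numbers for continuous local martingales yields $M_T/\langle M\rangle_T\to0$ a.s., which is exactly the claim for the second summand. Combining the two limits gives ${\theta}_T^{(3)}\to\theta$ a.s.

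The substantive analytic input is the asymptotic growth bound of Theorem \ref{teo4} for the fractional derivative of fBm, but since that is already established, the present argument reduces to careful bookkeeping of two error terms arising from independent sources of noise. I expect the only point demanding genuine attention to be the matching of hypotheses: one must check that the exponents $\alpha>1-H$ and $p>1$ guaranteed by $(C_4)$ are admissible in Theorem \ref{teo4}, and that $(C_2)$ indeed promotes $M$ from a local to a true (square-integrable) martingale so that the martingale strong law applies. Beyond this verification I anticipate no real obstacle, because the fractional and Wiener contributions in \eqref{eq.teta3} are estimated entirely separately and do not interact.
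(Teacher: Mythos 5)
Your proposal is correct and follows essentially the same route as the paper: the paper's own (very terse) proof likewise disposes of the Wiener term via the martingale law of large numbers under $(C_3)$ and handles the fractional term by repeating the argument of Theorem \ref{thma3.1} (the bound \eqref{eq2.2} plus Theorem \ref{teo4}) applied to $\vartheta$. Your write-up merely makes explicit the hypothesis-matching (that $(C_2)$ gives square integrability and that the $\alpha$, $p$ of $(C_4)$ are admissible in Theorem \ref{teo4}) that the paper leaves implicit.
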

\begin{proof} The last term in the right-hand side of \eqref{eq.teta3} tends to zero under condition $(C_3)$. The proof of convergence of the second term repeats the  proof of Theorem \ref{thma3.1}.
\end{proof}

 Similarly to Theorem \ref{theorem33.2}, conditions stated in Theorem \ref{theorem2.1}
  can be simplified in  case when  function $\vartheta$ is non-random.
\begin{theorem}\label{mixednonrandom} Let  assumptions $(C_1)$ and $(C_2)$ hold. Then, if functions $\varphi_1$ and $\varphi_2$ are non-random, function  $\varphi_1$ satisfies condition $(B_5)$, function $\varphi_2$ is bounded, then  estimate ${\theta}_T^{(3)}$ is strongly consistent as $T\rightarrow\infty$.
\end{theorem}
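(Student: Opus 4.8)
The plan is to treat the two stochastic terms in the representation \eqref{eq.teta3} of ${\theta}_T^{(3)}-\theta$ separately and show that each tends to zero a.s. First I would observe that since $H>\frac12$ and $p>0$, the exponent $2H-1+p$ is strictly positive, so condition $(B_5)$ forces $\int_0^T\varphi_1^2(s)ds\to\infty$; in particular $(C_3)$ holds automatically and all denominators are harmless. Moreover $(B_5)$ furnishes the quantitative lower bound $\int_0^T\varphi_1^2(s)ds\ge c\,T^{2H-1+p}$ for large $T$, which is what drives the estimates below.

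For the It\^o term $\frac{\int_0^T\varphi_1(s)dW_s}{\int_0^T\varphi_1^2(s)ds}$ I would simply invoke the strong law of large numbers for continuous martingales: $M_t=\int_0^t\varphi_1(s)dW_s$ is a continuous local martingale with quadratic variation $\langle M\rangle_t=\int_0^t\varphi_1^2(s)ds\to\infty$ a.s., whence $M_t/\langle M\rangle_t\to0$ a.s. This disposes of the third term and uses neither the non-randomness nor the boundedness hypotheses.

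The second term $\frac{\int_0^T\vartheta(s)dB_s^H}{\int_0^T\varphi_1^2(s)ds}$ with $\vartheta=\varphi_1\varphi_2$ non-random is then handled exactly as in Theorem \ref{theorem33.2}. The key reduction is that boundedness of $\varphi_2$, say $\abs{\varphi_2}\le C$, gives $\norm{\vartheta}_{L_2[0,T]}\le C\norm{\varphi_1}_{L_2[0,T]}=C(\int_0^T\varphi_1^2)^{1/2}$. Combining this with the moment bound from \cite{mmv}, namely $E\abs{\int_0^N\vartheta\,dB^H}^r\le C(H,r)\norm{\vartheta}^r_{L_2[0,N]}N^{(H-1/2)r}$, and Markov's inequality, I would estimate for $G_N:=\abs{\int_0^N\vartheta\,dB^H}/\int_0^N\varphi_1^2$ that
$$P(G_N>\varepsilon)\le C(H,r,\varepsilon)\frac{N^{(H-1/2)r}}{(\int_0^N\varphi_1^2)^{r/2}}\le C'\,N^{-rp/2},$$
where the last inequality uses the lower bound from $(B_5)$ together with the elementary exponent computation $(H-\tfrac12)r-(2H-1+p)\tfrac r2=-\tfrac{rp}{2}$. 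Choosing $r>2/p$ makes $\sum_N P(G_N>\varepsilon)$ convergent, so the Borel--Cantelli lemma yields $G_N\to0$ a.s.\ along the integers $N\in\NN$.

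Finally, to pass from the discrete sequence to the continuous parameter $T\to\infty$, I would control the oscillation $R_N=\sup_{T\in[N,N+1]}\abs{G_T-G_N}$ via the maximal inequality of Theorem 1.10.3 in \cite{mishura}, bounding $E\big(\sup_{T\in[N,N+1]}\abs{\int_N^T\vartheta\,dB^H}\big)^r\le C(H,r)\norm{\vartheta}^r_{L_2[N,N+1]}\le C(H,r)C^r\norm{\varphi_1}^r_{L_2[N,N+1]}$ and again invoking $(B_5)$ and Borel--Cantelli, precisely as in the residual argument of Theorem \ref{theorem33.2}. The only genuine new point is the reduction of $\vartheta$ to $\varphi_1$ through the boundedness of $\varphi_2$; once that is in place the remainder is a verbatim repetition of the proof of Theorem \ref{theorem33.2}, so I do not expect any serious obstacle.
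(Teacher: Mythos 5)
Your proposal is correct and takes essentially the route the paper intends: the paper states this theorem without a separate proof, as a direct analogue of Theorem \ref{theorem33.2} combined with the argument of Theorem \ref{theorem2.1}, which is exactly what you do — the It\^o term vanishes by the martingale strong law once you note $(B_5)$ implies $(C_3)$, and the fBm term is handled by the moment/Borel--Cantelli scheme of Theorem \ref{theorem33.2} after reducing $\norm{\vartheta}_{L_2}$ to $\norm{\varphi_1}_{L_2}$ via the boundedness of $\varphi_2$. Your exponent computation $(H-\tfrac12)r-(2H-1+p)\tfrac r2=-\tfrac{rp}{2}$ matches the paper's, so nothing further is needed.
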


Now we shall take a look at the properties of sequential estimates.
 \begin{theorem} \label{sequen} \begin{enumerate} \item [$(a)$] Let assumptions $(B_1)-(B_3)$ hold. Then  estimate ${\theta}_{\tau(h)}^{(1)}$ is unbiased, efficient, strongly consistent,  $E({\theta}_{\tau(h)}^{(1)}-\theta)^2=\frac1h$, and for any estimate of the form $${\theta}_{\tau}=\frac{\int_0^{\tau}\chi_sdZ_s}{\int_0^{\tau}\chi^2_sd\langle M^H\rangle _s}
     =\theta+\frac{\int_0^{\tau}\chi_sdM^H_s}{\int_0^{\tau}\chi^2_sd\langle M^H\rangle _s}$$ with $\tau<\infty$
a.s. and $E\int_0^{\tau}\chi^2_sd\langle M^H\rangle _s\leq h$ we have that $$E({\theta}_{\tau(h)}^{(1)}-\theta)^2\leq E({\theta}_{\tau}-\theta)^2.$$
\item [$(b)$] Let  function $\varphi$ be  separated from zero, $|\varphi(s)|\geq c>0$ a.s. and satisfy the assumption: for some $1-H<\alpha<1$ and $p>0$
 \begin{equation}\label{oh-oh}\frac{\int_0^{\tau(h)}|(\mathcal{D}_{0+}^{\alpha}\varphi)(s)|ds}
 {(\tau(h))^{2-\alpha-H-p}}\rightarrow0\quad\text{a.s.}\end{equation}  as $h\rightarrow\infty$. Then  estimate ${\theta}_{\tau(h)}^{(2)}$ is strongly consistent.
    \item [$(c)$] Let  function $\varphi_1$ be  separated from zero, $|\varphi(s)|\geq c>0$ a.s. and let function  $\vartheta$
     satisfy the assumption: for some $1-H<\alpha<1$ and $p>0$ \begin{equation}\label{oh-oh-oh}\frac{\int_0^{\upsilon(h)}|(\mathcal{D}_{0+}^{\alpha}\vartheta)(s)
     |ds}{(\upsilon(h))^{2-\alpha-H-p}}\rightarrow0\quad\text{a.s.}\end{equation} as $h\rightarrow\infty$. Then  estimate ${\theta}_{\upsilon(h)}^{(3)}$ is strongly consistent.

     \item [$(d)$] Let   function $\vartheta$ be non-random, bounded and positive, $\varphi_1$ be separated from zero. Then estimate ${\theta}_{\upsilon(h)}^{(3)}$ is consistent in the following sense: for any $p>0$, $E\Big|\theta-{\theta}_{\upsilon(h)}^{(3)}\Big|^p\rightarrow 0$ as $h\rightarrow\infty$.
\end{enumerate}
\end{theorem}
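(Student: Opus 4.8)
The four claims separate naturally into a \emph{martingale part}, treated by the Dambis--Dubins--Schwarz (DDS) time change, and a \emph{fractional part}, treated by the pathwise bound \eqref{eq2.2} together with the growth estimate of Theorem \ref{teo4}; I would organize the proof along this division. For (a), set $N_t=\int_0^t\chi_s\,dM_s^H$, which by $(B_2)$ is a continuous square-integrable martingale with $\langle N\rangle_t=\int_0^t\chi_s^2\,d\langle M^H\rangle_s$. Conditions $(B_1)$--$(B_3)$ give $\langle N\rangle_\infty=\infty$ a.s., so $\tau(h)<\infty$ a.s. and, by continuity of $\langle N\rangle$, $\langle N\rangle_{\tau(h)}=h$ exactly. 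The DDS theorem produces a Brownian motion $\beta$ with $N_t=\beta_{\langle N\rangle_t}$, whence $N_{\tau(h)}=\beta_h$ and
$$
{\theta}^{(1)}_{\tau(h)}-\theta=\frac{N_{\tau(h)}}{h}=\frac{\beta_h}{h}.
$$
Since $\beta_h$ is centered Gaussian with variance $h$, this yields unbiasedness and $E({\theta}^{(1)}_{\tau(h)}-\theta)^2=1/h$ at once, while strong consistency follows from the strong law of large numbers for $\beta$ ($\beta_h/h\to0$ a.s. as $h\to\infty$).

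For the consistency statements (b) and (c), write $\tau$ for the governing stopping time, so that the normalizing constant $h$ is the accumulated energy $\int_0^{\tau}\varphi^2$, respectively $\int_0^{\upsilon(h)}\varphi_1^2$. Since $|\varphi|\ge c>0$ we have $\int_0^t\varphi^2\ge c^2t$, so $\tau<\infty$, $\tau\uparrow\infty$ as $h\to\infty$, and $h\ge c^2\tau$. Applying \eqref{eq2.2} pathwise and then Theorem \ref{teo4} — which supplies a single random variable $\xi(p)$, independent of the upper limit, with $\sup_{0\le t\le \tau}|(\mathcal{D}_{\tau-}^{1-\alpha}B_{\tau-}^H)(t)|\le \xi(p)\,\tau^{H+\alpha-1}(\log\tau)^p$ for $\tau$ large (here $H+\alpha-1>0$) — we obtain
$$
\frac{\bigl|\int_0^{\tau}g\,dB^H\bigr|}{h}\le\frac{\xi(p)}{c^2}\Bigl(\frac{\log\tau}{\tau}\Bigr)^p\,\frac{\int_0^{\tau}|(\mathcal{D}_{0+}^{\alpha}g)(s)|\,ds}{\tau^{\,2-\alpha-H-p}},
$$
with $g=\varphi$ in (b) and $g=\vartheta$ in (c). The first factor tends to $0$ and the second to $0$ by \eqref{oh-oh}, respectively \eqref{oh-oh-oh}, which settles the fractional terms. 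In (c) the extra Wiener term $\tfrac1h\int_0^{\upsilon(h)}\varphi_1\,dW$ is handled exactly as in (a): its bracket at $\upsilon(h)$ equals $h$, so it equals $\beta_h$ and $\beta_h/h\to0$ a.s. Hence ${\theta}^{(2)}_{\tau(h)}$ and ${\theta}^{(3)}_{\upsilon(h)}$ are strongly consistent.

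For (d), with $\vartheta$ non-random I would pass to $L^p$-norms. The Wiener term is again $\beta_h/h$ by DDS, hence centered Gaussian with variance $1/h$, so its $L^p$-norm is $O(h^{-1/2})$. For the fractional term, $|\varphi_1|\ge c$ forces the deterministic bound $\upsilon(h)\le h/c^2$, so
$$
\Bigl|\int_0^{\upsilon(h)}\vartheta\,dB^H\Bigr|\le\sup_{0\le T\le h/c^2}\Bigl|\int_0^{T}\vartheta\,dB^H\Bigr|,
$$
and the maximal moment inequality (Theorem 1.10.3 of \cite{mishura}, already used for Theorem \ref{theorem33.2}) gives, for every $r>0$, $E\bigl(\sup_{T\le h/c^2}|\int_0^T\vartheta\,dB^H|\bigr)^r\le C\|\vartheta\|^{r}_{L_{1/H}[0,h/c^2]}\le C'\,h^{Hr}$. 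Dividing by $h^r$ yields $O(h^{(H-1)r})\to0$ since $H<1$. Combining the two estimates through Minkowski's inequality gives $E|\theta-{\theta}^{(3)}_{\upsilon(h)}|^p\to0$ for every $p>0$.

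The genuine obstacle is the efficiency and optimality claim in (a). The value $1/h$, unbiasedness, and consistency are read off from $\beta_h$, but the inequality $E({\theta}^{(1)}_{\tau(h)}-\theta)^2\le E({\theta}_\tau-\theta)^2$ resists a direct computation: for a competing plan $\langle N\rangle_\tau$ is random and correlated with $N_\tau$, so the mean square error $E[N_\tau^2/\langle N\rangle_\tau^2]$ does not factor. The elementary bound $\langle N\rangle_\tau^{-2}\ge 2h^{-1}\langle N\rangle_\tau^{-1}-h^{-2}$ together with Wald's identity $E[N_\tau^2]=E[\langle N\rangle_\tau]\le h$ reduces the claim to $E[N_\tau^2/\langle N\rangle_\tau]\ge1$, which is exactly the sequential information (Cram\'er--Rao) inequality for square-integrable martingales; I would invoke the sequential estimation theory of \cite{melnnov}, verifying that $(B_1)$--$(B_3)$ place the Molchan martingale $M^H$ in the form required there and that the deterministic information $\langle N\rangle_{\tau(h)}\equiv h$ attains the bound.
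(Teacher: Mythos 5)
Your treatment of (b), (c) and (d) is correct and essentially coincides with the paper's own proof: the fractional terms are handled by the pathwise bound \eqref{eq2.2} together with Theorem \ref{teo4} and the observation $h\ge c^2\tau(h)$ (resp.\ $h\ge c^2\upsilon(h)$), exactly as in the paper (you keep the factor $(\log\tau)^p$ explicit where the paper absorbs it into $\tau^p$); and in (d) the deterministic bound $\upsilon(h)\le h/c^2$ plus a maximal moment inequality for Wiener integrals w.r.t.\ fBm reproduces the paper's estimate $C(H,p)\big(\vartheta^*c^{-2}h^{H-1}+h^{-p/2}\big)$, your Dambis--Dubins--Schwarz treatment of the Wiener term being an equivalent (indeed cleaner) substitute for the paper's Burkholder--Gundy step. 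The first half of (a) — unbiasedness, $E(\theta^{(1)}_{\tau(h)}-\theta)^2=1/h$, and strong consistency via $N_{\tau(h)}=\beta_h$ — is also fine, and more explicit than the paper, which declares these facts evident.

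The genuine gap is in the optimality claim of (a), precisely where you flagged ``the genuine obstacle'': your proposed reduction does not close it. The algebra $\langle N\rangle_\tau^{-2}\ge 2h^{-1}\langle N\rangle_\tau^{-1}-h^{-2}$ plus Wald's identity is valid, but the statement it reduces the problem to, $E\big[N_\tau^2/\langle N\rangle_\tau\big]\ge 1$, is \emph{not} the sequential Cram\'er--Rao inequality, and it is false at the level of generality you work in. Via DDS write $N_\tau=\beta_T$ with $T=\langle N\rangle_\tau$, and take $T=\inf\{t\ge 1:\ |\beta_t|\le\varepsilon\}\wedge C$: then $ET<\infty$, while $E\big[\beta_T^2/T\big]\le \varepsilon^2+\sqrt{3}\,P(A_C)^{1/2}$, where $A_C$ is the event that the band is not hit before time $C$; by recurrence $P(A_C)\to 0$ as $C\to\infty$, so the left-hand side is $<1$ for small $\varepsilon$ and large $C$. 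Such ``stop near a zero of $N$'' rules satisfy $\tau<\infty$ a.s.\ and $E\langle N\rangle_\tau<\infty$, which are the only restrictions your argument (and, read literally, the theorem) imposes, so your intermediate inequality cannot be proved. The paper's own proof of this part consists of invoking \cite{lipshir}, Chapter 17, where the comparison is established by the genuine sequential Cram\'er--Rao argument: for an \emph{unbiased} plan one differentiates $E_\theta\hat\theta_\tau=\theta$ through the likelihood ratio and applies Cauchy--Schwarz, obtaining $1\le\big(E(\hat\theta_\tau-\theta)^2\big)^{1/2}\big(E\langle N\rangle_\tau\big)^{1/2}$ and hence $E(\hat\theta_\tau-\theta)^2\ge 1/h$ directly, with no detour through the information-weighted quantity $E\big[N_\tau^2/\langle N\rangle_\tau\big]$. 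The correct repair of your (a) is therefore not to seek a proof of your intermediate inequality, but to verify that the competing plans fall into the class covered by \cite{lipshir} (or \cite{melnnov}) — unbiasedness, or stopping rules adapted to the observation filtration — and apply the Cram\'er--Rao bound to the mean-square error itself.
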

\begin{proof} (a) Process $\int_0^{\tau(h)}\chi_sdM^H_s$ is a square-integrable martingale which implies that   estimate ${\theta}_{\tau(h)}^{(1)}$ is unbiased. Besides, the results from \cite{lipshir}, Chapter 17, can be applied to \eqref{3.30} directly, therefore estimate  ${\theta}_{\tau(h)}^{(1)}$ is efficient, $E({\theta}_{\tau(h)}^{(1)}-\theta)^2=\frac1h$, and for any estimate  of the form ${\theta}_{\tau}=\frac{\int_0^{\tau}\chi_sdZ_s}{\int_0^{\tau}\chi^2_sd\langle M^H\rangle _s}
     =\theta+\frac{\int_0^{\tau}\chi_sdM^H_s}{\int_0^{\tau}\chi^2_sd\langle M^H\rangle _s}$ with $\tau<\infty$
a.s. and $E\int_0^{\tau}\chi^2_sd\langle M^H\rangle _s\leq h$ we have that $E({\theta}_{\tau(h)}^{(1)}-\theta)^2\leq E({\theta}_{\tau}-\theta)^2$. Strong consistency is also evident.

(b) We have that $|\int_0^{\tau(h)}\varphi(s)dB_s^H|\leq (\tau(h))^{H+\alpha-1+p}\int_0^{\tau(h)}|(\mathcal{D}_{0+}^{\alpha}\varphi)(s)|ds$. It is sufficient to note that $h=\int_0^{\tau(h)}\varphi^2_s ds\geq c^2\tau(h)$.
The proof of statement (c) is now evident.

(d) It was proved in \cite{mishura} that in   case of non-random bounded positive function $0\leq \vartheta(s)\leq\vartheta^*$, for any stopping time $\upsilon$  $$\Big(E\Big(\sup_{0\leq t\leq\upsilon}\Big|\int_0^t \vartheta(s)dB_s^H\Big|\Big)^p\Big)^\frac1p\leq C(H,p)\vartheta^*\Big(E\upsilon^{pH}\Big)^{\frac1p}.$$
Furthermore,  same as before,   $|\varphi_1(s)|\geq c$ and $h=\int_0^{\upsilon(h)}\varphi^2_1(s) ds\geq c^2\upsilon(h)$.
These inequalities together with the Burkholder-Gundy inequality yield $$E\Big|\theta-{\theta}_{\upsilon(h)}^{(3)}\Big|^p\leq C(H,p)\Big(\frac{\vartheta^*}{c^2}h^{H-1}+h^{-\frac p2}\Big)\rightarrow0\quad\text {as} \quad h\rightarrow\infty.$$

\end{proof}
\begin{remark} Another proof of statement (a) is contained in \cite{prakasa rao}. Assumptions\eqref{oh-oh} and \eqref{oh-oh-oh} hold, for example, for bounded and Lipschitz functions $\varphi$ and $\vartheta$  correspondingly.
\end{remark}
 \subsection{Linear models and strong consistency.} I. Consider the  linear version of   model \eqref{main}: $$dX_t=\theta a(t)X_tdt+b(t) X_tdB_t^H, $$ where $a$ and $b$ are locally bounded non-random measurable functions.  In this case  solution $X$ exists, is unique and can  be presented in the integral form $$X_t=x_0+\theta\int_0^t a(s)X_sds+\int_0^tb(s) X_sdB_s^H=x_0\exp\Big\{\theta \int_0^t a(s) ds+\int_0^tb(s) dB_s^H\Big\}.$$
Suppose that function $b$ is non-zero and note that in this model $$\varphi(t)=\frac {a(t)}{b(t)}.$$ Suppose that $\varphi(t)$ is also locally bounded and
consider maximum likelihood estimate  $\theta_T^{(1)}$.    According to \eqref{frac}, to guarantee existence of  process $J'$, we have to  assume that the fractional derivative of  order $\frac32-H$ for  function  $\varsigma(s):=\varphi(s)s^{\frac12-H}$ exists and is integrable. The sufficient conditions for the existence of fractional derivatives can be found in \cite{Samko}. One of these conditions states:
\begin{enumerate} \item [$(B_6)$]
 Functions  $\varphi$ and $\varsigma$ are differentiable and their derivatives are  locally integrable.
 \end{enumerate}
  So,  the maximum likelihood estimate does not exist for an arbitrary locally bounded function $\varphi$.   Suppose  that  condition $(B_6)$ holds  and  limit $\varsigma_0=\lim_{s\rightarrow 0}\varsigma(s)$ exists. In this case, according to Lemma \ref{lem2.1} and Remark \ref{remzhiprime}, process $J'$ admits both of the following representations:
  \begin{equation*} \begin{gathered}\label{nemnogo} J'(t)=(2-2H)C_H\varphi(0) t^{1-2H}+\int_0^tl_H(t,s)\varphi'(s)ds\\
-\Big(H-\frac12\Big)c_H\int_0^ts^{-\frac12-H}(t-s)^{\frac12-H}\int_0^s\varphi'(u)duds
\\=c_H\varsigma_0t^{\frac12-H}+c_H\int_0^t(t-s)^{\frac12-H}\varsigma'(s)ds,
\end{gathered}\end{equation*}
and assuming  $(B_3)$ also holds true, the estimate $\theta^{(1)}_T$ is strongly consistent. Let us formulate some simple conditions sufficient for the strong consistency. The proof is obvious and therefore is omitted.
\begin{lemma} If function $\varphi$ is non-random, locally bounded, satisfies $(B_6)$, limit $\varsigma(0)$ exists  and one of the following assumptions hold:
\begin{enumerate} \item [(a)]
 function $\varphi$ is not identically zero and $\varphi'$ is non-negative and non-decreasing;
 \item [(b)] derivative $\varsigma'$ preserves the sign and is separated from zero;
 \item [(c)] derivative $\varsigma'$ is non-decreasing and has a non-zero limit,

 \end{enumerate}
then the estimate ${\theta}_T^{(1)}$ is strongly consistent as $T\rightarrow\infty$.
\end{lemma}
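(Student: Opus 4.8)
The plan is to reduce the claim to the single divergence condition $(B_3)$, namely $\int_0^\infty \chi_s^2\,d\langle M^H\rangle_s=\infty$ a.s., and then to verify it separately in the three cases. The discussion preceding the statement already shows that, under the standing assumptions together with $(B_6)$ and the existence of $\varsigma(0)$, the process $J'$ (hence $\chi(t)=(2-2H)^{-1}J'(t)t^{2H-1}$) is well defined and admits, by Lemma \ref{lem2.1} and Remark \ref{remzhiprime}, the representation
$$J'(t)=c_H\varsigma(0)t^{\frac12-H}+c_H\int_0^t(t-s)^{\frac12-H}\varsigma'(s)\,ds,$$
and that strong consistency of $\theta^{(1)}_T$ follows once $(B_3)$ holds. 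Since $\varphi$ is non-random and locally bounded, $\chi$ is deterministic and locally bounded, so $(B_2)$ holds automatically and $\int_0^{\cdot}\chi_s\,dM^H_s$ is a Gaussian martingale with deterministic bracket $I_T=\int_0^T\chi_s^2\,d\langle M^H\rangle_s$; once $I_T\to\infty$, the law of large numbers for martingales forces $I_T^{-1}\int_0^T\chi_s\,dM^H_s\to0$ a.s., i.e. $\theta^{(1)}_T\to\theta$. Using $d\langle M^H\rangle_s=(2-2H)s^{1-2H}ds$, condition $(B_3)$ is equivalent to the purely deterministic statement $\int_0^\infty J'(s)^2 s^{2H-1}\,ds=\infty$, so it suffices to bound $|J'(t)|$ from below for large $t$.

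Next I would analyse the convolution $\int_0^t(t-s)^{\frac12-H}\varsigma'(s)\,ds$, whose growth dominates the lower-order term $c_H\varsigma(0)t^{\frac12-H}$. In cases (b) and (c) the sign information on $\varsigma'$ is immediate: if $\varsigma'$ keeps a constant sign and $\abs{\varsigma'}\ge c_0>0$ (case (b)), or if $\varsigma'$ is monotone with a non-zero limit (case (c)), then $\abs{\varsigma'(s)}\ge c_0>0$ for all large $s$ with a fixed sign. Consequently $\int_0^t(t-s)^{\frac12-H}\varsigma'(s)\,ds$ is, up to a lower-order term, bounded below in absolute value by a multiple of $t^{3/2-H}$, so that $\abs{J'(t)}\ge C t^{3/2-H}$ for large $t$ and $J'(s)^2 s^{2H-1}\ge C s^{2}$. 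The elementary divergence $\int^\infty s^{2}\,ds=\infty$ then yields $(B_3)$.

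Case (a) is the one requiring the most work, because the hypothesis is phrased in terms of $\varphi'$ rather than $\varsigma'$; this translation is the main obstacle. The first step is to note that $\varphi'\equiv0$ is impossible, since it would force $\varphi\equiv c\ne0$ and hence $\varsigma(s)=c\,s^{1/2-H}\to\pm\infty$ as $s\downarrow0$ (recall $H>\tfrac12$), contradicting the existence of $\varsigma(0)$. Thus $\varphi'$ is non-negative, non-decreasing and not identically zero, so $\varphi'(s)\ge c_0>0$ for $s\ge s_0$. Using $\varsigma'(s)=s^{1/2-H}\varphi'(s)+(\tfrac12-H)s^{-1/2-H}\varphi(s)$ together with the convexity bound $\varphi(s)\le\varphi(s_0)+s\varphi'(s)$ (valid since $\varphi'$ is non-decreasing), one controls the singular negative contribution by the positive one and obtains $\varsigma'(s)\ge(\tfrac32-H)s^{1/2-H}\varphi'(s)-Cs^{-1/2-H}\ge C's^{1/2-H}>0$ for large $s$; the point is that the $s^{-1/2-H}$ term is negligible compared with $s^{1/2-H}$. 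Feeding this lower bound into the convolution gives $\int_0^t(t-s)^{1/2-H}\varsigma'(s)\,ds$ bounded below by a multiple of $t^{2-2H}$, hence $\abs{J'(t)}\ge Ct^{2-2H}$ and $J'(s)^2 s^{2H-1}\ge Cs^{3-2H}$ with $3-2H>0$, so the integral again diverges. Once this translation is settled, all three cases reduce to the same elementary divergence $\int^\infty s^{\kappa}\,ds=\infty$ with $\kappa>0$, establishing $(B_3)$ and therefore strong consistency of $\theta^{(1)}_T$.
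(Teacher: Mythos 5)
Your proof is correct and follows exactly the route the paper intends (the paper itself omits the proof as ``obvious''): reduce strong consistency to condition $(B_3)$ via the representation $J'(t)=c_H\varsigma_0 t^{\frac12-H}+c_H\int_0^t(t-s)^{\frac12-H}\varsigma'(s)\,ds$ from Remark \ref{remzhiprime}, then verify the divergence $\int^\infty J'(s)^2s^{2H-1}\,ds=\infty$ case by case, with case (a) handled by translating the hypothesis on $\varphi'$ into a lower bound $\varsigma'(s)\geq C's^{\frac12-H}$ for large $s$. One remark: your (correct) observation that the existence of a finite $\varsigma(0)$ forces $\varphi(0)=0$ and rules out constant non-zero $\varphi$ in case (a) actually exposes a small inconsistency in the paper, whose Example 1 asserts that constant coefficients satisfy assumption (a) even though the limit $\varsigma(0)$ fails to exist there.
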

\begin{example}: Let the coefficients are constant, $a(s)=a\neq0$ and $b(s)=b\neq0$, then  the estimate has a form $\theta^{(1)}_T=\theta+\frac{b M^H_T}{aC_HT^{2-2H}}$ and is strongly consistent. In this case assumption (a) holds. In addition, power functions $\varphi(s)=s^\rho$ are appropriate for $\rho>H-1$: this can be verified directly from \eqref{frac}.
\end{example}

 Let us now apply    estimate $\theta^{(2)}_T$ to the same model. It has a form \eqref{teta2}. We can use  Theorem \ref{theorem33.2} directly  and under assumption   $(B_5)$  estimate $\theta^{(2)}_T$ is strongly consistent. Note that we do not need any assumptions on the smoothness of $\varphi$, which is a clear advantage of $\theta^{(2)}_T$. We shall consider two more examples.

 \begin{example}: If the coefficients are constant, $a(s)=a\neq0$ and $b(s)=b\neq0$, then  the estimate has a form $\theta^{(2)}_T=\theta+\frac{b B^H_T}{aT}$. We can refer to Theorem \ref{theorem33.2} and conclude that $\theta^{(2)}_T$ is strongly consistent. Alternatively, we can use Remark \ref{prosto} which states that $|B_T^H|\leq \xi T^H(\log T)^p$ for any $p>1$ and some random variable $\xi$, therefore $\frac{ B^H_T}{T}\rightarrow 0$ a.s. as $T\rightarrow \infty$. In this  case both  estimates $\theta^{(2)}_T$ and  $\theta^{(2)}_T$ are strongly consistent  and $E(\theta-\theta^{(1)}_T)^2=\frac{\gamma^2 T^{2H-2}}{a^2C_H^2}$ has the same asymptotic behavior  as $E(\theta-\theta^{(2)}_T)^2=\frac{\gamma^2 T^{2H-2}}{a^2}$.
  \end{example}
  \begin{example}: If non-random functions $\varphi$ and $\varsigma$ are bounded on some fixed interval $[0,t_0]$ but $\varsigma$ is sufficiently irregular on this interval and has no fractional derivative of order $\frac32-H$ or higher then  we can not even calculate $J'(t)$ on this interval and the maximum likelihood estimate does not exist. However, if we assume that $\varphi(t)\sim t^{H-1+\rho}$ at infinity with some $\rho>0$, then assumption $(B_5)$ holds and estimate ${\theta}_T^{(2)}$ is strongly consistent as $T\rightarrow\infty$. In this sense estimate ${\theta}_T^{(2)}$ is more flexible.
  \end{example}

 II. Consider a mixed linear model of the form \begin{equation}\label{mixedlinear}dX_t=X_t(\theta a(t)dt+b(t)dB_t^H+c(t)dW_t), \end{equation} where $a$, $b$ and $c$ are non-random measurable functions. Assume that they are locally bounded. In this case solution $X$ for equation \eqref{mixedlinear} exists, is unique and can  be presented in the integral form $$X_t=x_0\exp\Big\{\theta \int_0^t a(s) ds+\int_0^tb(s) dB_s^H+\int_0^tc(s) dW_s-\frac12\int_0^tc^2(s)ds\Big\}.$$
In what follows assume that $c(s)\neq 0$. We have that $\varphi_1(t)=\frac{a(t)}{c(t)}$ and $\varphi_2(t)=\frac{b(t)}{c(t)}$. Estimate ${\theta}_T^{(3)}$ has a form
 \begin{equation}\label{eq3.31} {\theta}_T^{(3)}=\frac{\int_0^T\varphi_1(s)dY_s}{\int_0^T\varphi_1^2(s)ds}
 =\theta+\frac{\int_0^T\varphi_1(s)\varphi_2(s)dB^H_s}{\int_0^T\varphi_1^2(s)ds}+\frac{\int_0^T\varphi_1(s)dW_s}
 {\int_0^T\varphi_1^2(s)ds}. \end{equation}
In accordance with Theorem \ref{mixednonrandom}, assume that function $\varphi_1$ satisfies $(B_5)$ and $\varphi_2$ is bounded.
 Then  estimate ${\theta}_T^{(3)}$ is strongly consistent. Evidently, these assumptions hold for the constant coefficients.

\subsection{The fractional Ornstein-Uhlenbeck model and strong consistency.} I. Consider the  fractional Ornstein-Uhlenbeck, or Vasicek, model with non-constant coefficients.  It has a form
$$dX_t = \theta(a(t)X_t+b(t))dt + \gamma(t) dB^H_t,\,t\geq 0,$$ where $a$, $b$ and $\gamma$ are non-random measurable functions. Suppose they are locally bounded and $\gamma=\gamma(t)>0$. The solution for this equation is a Gaussian process and has a form
\begin{equation*} X_t=e^{\theta A(t)}\Big(x_0+\theta\int_0^tb(s)e^{-\theta A(s)}ds+\int_0^t\gamma(s)e^{-\theta A(s)}dB^H_s\Big):=E(t)+G(t),
\end{equation*}
where
$A(t)=\int_0^ta(s)ds$, $E(t)=e^{\theta A(t)}\Big(x_0+\theta\int_0^tb(s)e^{-\theta A(s)}ds\Big)$ is a non-random function, $G(t)=e^{\theta A(t)}\int_0^t\gamma(s)e^{-\theta A(s)}dB^H_s$ is a Gaussian process with zero mean.

 Denote $c(t)=\frac{a(t)}{\gamma(t)}, d(t)=\frac{b(t)}{\gamma(t)}.$ Now  we shall state the conditions for strong consistency of the maximum likelihood estimate.
 \begin{theorem}\label{ornstein} Let  functions $a$, $c$, $d$ and $\gamma$ satisfy the following assumptions:
\begin{enumerate} \item [$(B_7)$] $-a_1\leq a(s)\leq -a_2<0$,  $-c_1\leq c(s)\leq -c_2<0$, $0<\gamma_1\leq\gamma(s)\leq\gamma_2$,  functions $c$ and $d$  are continuously  differentiable, $c'$ is bounded,  $c'(s)\geq 0$ and $c'(s)\rightarrow 0$ as $s\rightarrow\infty$.
\end{enumerate}
Then estimate ${\theta}_T^{(1)}$ is strongly consistent as $T\rightarrow\infty$.
 \end{theorem}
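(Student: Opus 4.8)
The plan is to read the Vasicek equation as the special case of \eqref{main} with drift coefficient $a(s,x)=a(s)x+b(s)$ and diffusion coefficient $b(s,x)=\gamma(s)$, so that $\psi(t,x)=\frac{a(t)x+b(t)}{\gamma(t)}=c(t)x+d(t)$ and $\varphi(t)=\psi(t,X_t)=c(t)X_t+d(t)$. Since $\psi$ is affine in $x$ it lies in $C^1(\RR^+)\times C^2(\RR)$ as soon as $c,d\in C^1$, which is granted by $(B_7)$; together with the bounds in $(B_7)$ the structural hypotheses $(A_1),(A_3),(A'_2),(A'_4)$ are readily checked, so a unique solution exists, and $(B_1)$ holds trivially because $\gamma(s)\ge\gamma_1>0$ and $\varphi$ is a.s.\ continuous, hence locally integrable. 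Thus, by Proposition \ref{pro_2.1}, the whole statement reduces to verifying the two conditions $(B_2)$ and $(B_3)$ for the functional $\chi_t=(2-2H)^{-1}t^{2H-1}J'(t)$.

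Next I would make $J'(t)$ explicit through Lemma \ref{lem2.1}: here $\psi'_x(s,X_s)=c(s)$, $\psi'_t(s,X_s)=c'(s)X_s+d'(s)$, $a(s,X_s)=a(s)X_s+b(s)$ and $\psi'_x(s,X_s)b(s,X_s)=c(s)\gamma(s)=a(s)$, so the Lebesgue terms in \eqref{ziprime} carry the affine integrand $(c'(s)+\theta c(s)a(s))X_s+d'(s)+\theta c(s)b(s)$ while the stochastic terms carry the bounded weight $a(u)$. Because the model is linear, $X_t=E(t)+G(t)$ is Gaussian, hence $\varphi$, $J_t$, $J'(t)$ and $\chi_t$ form a Gaussian family and all their moments are finite. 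Condition $(B_2)$ is then routine: for each finite $T$ the map $s\mapsto E[(J'(s))^2]$ is finite and locally bounded by local boundedness of the coefficients, while near the origin $(J'(s))^2 s^{2H-1}=O(s^{1-2H})$ is integrable since $1-2H>-1$; therefore $EI_T=(2-2H)^{-1}\int_0^T E[(J'(s))^2]\,s^{2H-1}\,ds<\infty$.

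The heart of the argument, and the step I expect to be the main obstacle, is $(B_3)$, namely $I_\infty=\int_0^\infty\chi_s^2\,s^{1-2H}\,ds=\infty$ a.s. I would exploit Gaussianity and split $\chi_s=m(s)+\widetilde\chi_s$ into its mean $m(s)=E\chi_s$ and a centred Gaussian remainder $\widetilde\chi_s$. The key structural remark is that, since the stochastic integrals in \eqref{ziprime} are centred, $m(s)$ coincides with the deterministic process $\chi_s$ attached to the \emph{noise-free} equation $E'(t)=\theta(a(t)E(t)+b(t))$, obtained by replacing $\varphi$ by its mean $E\varphi(s)=c(s)E(s)+d(s)$ in \eqref{frac}. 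The monotonicity and sign conditions of $(B_7)$ ($c,a$ separated from $0$, $c'\ge0$, $c'\to0$) are exactly what is needed to rule out asymptotic cancellation in the integrals defining $m(s)$ and to keep $|m(s)|$ away from $0$ for large $s$; together with $\int^{\infty} s^{1-2H}\,ds=\infty$ this gives $\int_0^\infty m(s)^2\,s^{1-2H}\,ds=\infty$ and, in particular, $EI_T\ge c\,T^{2-2H}\to\infty$. The remaining and most delicate point is to transfer this divergence from the mean to the trajectories. Since the remainder $\widetilde\chi_s$ typically does not decay (in the stationary regime it is asymptotically stationary), the crude bound $\chi_s^2\ge\tfrac12 m(s)^2-\widetilde\chi_s^2$ is inconclusive, because $\int_0^\infty\widetilde\chi_s^2\,s^{1-2H}\,ds$ diverges as well. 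I would therefore control $\widetilde\chi_s$ pathwise through the Gaussian growth estimates of Theorem \ref{teo4} and establish an ergodic-type concentration for the quadratic functional $I_T$: the covariances of the asymptotically stationary Gaussian process $\chi$ decay fast enough that $\mathrm{Var}(I_T)=o((EI_T)^2)$, so that $I_T/EI_T\to1$ and, using monotonicity of $I_T$ in $T$, $I_\infty=\infty$ a.s. With $(B_2)$ and $(B_3)$ in hand, Proposition \ref{pro_2.1} yields the strong consistency of $\theta^{(1)}_T$.
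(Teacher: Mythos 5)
Your reduction to Proposition \ref{pro_2.1} and your verification of $(B_2)$ are exactly what the paper does, but your verification of $(B_3)$ — which is indeed the heart of the matter — rests on two claims that fail under $(B_7)$. First, the mean $m(s)=E\chi_s$ need \emph{not} stay away from zero: $(B_7)$ imposes no condition on $b$ (hence on $d$) or on $x_0$ beyond smoothness, so taking $x_0=0$ and $b\equiv 0$ (perfectly admissible) gives $E(t)\equiv 0$, hence $F(t)=\int_0^t l_H(t,s)\left(d(s)+c(s)E(s)\right)ds\equiv 0$ and $m\equiv 0$. In that case your lower bound $EI_T\geq cT^{2-2H}$, which you derive entirely from the mean, collapses, and all of the divergence must come from the variance of the stochastic part — the very object your scheme treats as a remainder to be dominated. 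Second, the concentration claim $\mathrm{Var}(I_T)=o\left((EI_T)^2\right)$ is false in the non-ergodic case $\theta<0$ (the unknown $\theta$ can have either sign, so the proof must cover it): there $\theta a(t)>0$, the factor $e^{\theta A(t)}$ grows exponentially, the Wiener integral $\int_0^t\gamma(u)e^{-\theta A(u)}dB_u^H$ converges to a single Gaussian variable $\zeta$, and therefore $G(t)$, and with it the dominant part of $\chi_t$, is asymptotically a deterministic exponentially growing factor times $\zeta$. Correlations of $\chi$ tend to $1$, $I_T/EI_T$ converges to the nondegenerate limit $\zeta^2/E\zeta^2$, and $\mathrm{Var}(I_T)\asymp (EI_T)^2$; the process is nothing like asymptotically stationary with decaying covariances, so the ergodic-type step cannot be carried out.

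The paper sidesteps both problems with a different device. By Cauchy--Schwarz, $I_T\geq T^{-1}\bigl(\int_0^T J'_t t^{H-\frac12}dt\bigr)^2$, and the integral $\int_0^T J'_t t^{H-\frac12}dt$ is Gaussian; the explicit formula \eqref{sigma} shows that $E\exp\{-a\xi^2\}$ for a Gaussian $\xi$ is \emph{maximal} when the mean vanishes, so $E\exp\{-\lambda I_T\}\leq\bigl(\frac{2\lambda\sigma^2_T}{T}+1\bigr)^{-\frac12}$, where $\sigma^2_T$ is the variance of the centered part $\int_0^T H'_t t^{H-\frac12}dt$ alone. Since $I_\infty=\infty$ a.s.\ is equivalent to $\lim_T E\exp\{-\lambda I_T\}=0$, condition $(B_3)$ reduces to showing $\sigma^2_T/T\to\infty$, i.e.\ to a purely deterministic covariance computation for the stochastic part (done separately for $\theta<0$, $\theta>0$, $\theta=0$, yielding $\sigma_T^2\asymp T^5$, $T^{4-2H}+T^3$, etc.). The mean is discarded at the outset — it can only help — so the degenerate case $m\equiv0$ costs nothing, and no concentration of the quadratic functional is ever needed, so the $\theta<0$ degeneracy is harmless. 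If you wish to rescue your route, you would need (i) a proof that $EI_T\to\infty$ not relying on the mean, and (ii) a substitute for concentration valid when correlations do not decay; the Laplace-transform reduction supplies both in one stroke.
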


\begin{proof} We shall check the conditions of Proposition \ref{pro_2.1}. Obviously, $\psi(t,x)=c(t)x+d(t)\in C^1(\RR^+)\times C^2(\RR) $ and
 $$J(t)=\int_0^tl_H(t,s)(d(s)+c(s)E(s))ds+\int_0^tl_H(t,s)c(s)G(s)ds:=F(t)+H(t). $$ Furthermore,  assumptions $(A_1)$, $(A_3)$, $(A'_2)$, $(A'_4)$ and $(B_1)$ hold. Note that the trajectories of  process $G$ are a.s. H\"{o}lder up to order $H$, whence  $$\lim_{s\rightarrow0}s^{\frac12-H}c(s)G(s)=0.$$ Therefore \begin{equation*}\begin{gathered}J'(t)=F'(t)+H'(t)=F'(t)+\int_0^tl_H(t,s)f(s)G(s)ds+\int_0^tl_H(t,s)c(s)\gamma(s)dB_s^H,\end{gathered}\end{equation*}
where $f(s)=\Big(\frac12-H\Big)s^{-1}c(s)+c'(s)+\theta a(s)c(s)$. Evidently, $J'_t$ is  Gaussian process with mean and variance that are bounded on any bounded interval. Therefore, condition   $(B_2)$ holds.  As for condition $(B_3)$, we must verify that $I_\infty=\int_0^\infty(J'_t)^2t^{2H-1}dt=\infty$ a.s. For any $\lambda>0$ consider the moment generation function  $$\Theta_T(\lambda)=E \exp\{-\lambda I_T\}=E\exp\{-\lambda\int_0^T (J'_t)^2t^{2H-1}dt\}$$ and $$\Theta_\infty(\lambda)=E \exp\{-\lambda I_\infty\}=E\exp\{-\lambda\int_0^\infty (J'_t)^2t^{2H-1}dt\},$$ so that  $\Theta_\infty(\lambda)=\lim_{T\rightarrow\infty}\Theta_T(\lambda)$. Evidently,
    \begin{eqnarray} \begin{gathered}
    \int_0^T(J'_t)^2t^{2H-1}dt\geq T^{-1} \Big(\int_0^TJ'_tt^{H-\frac12}dt\Big)^2,\nonumber
    \end{gathered}
    \end{eqnarray}
    whence $$\Theta_T(\lambda)\leq \Theta_T^{(1)}(\lambda):=E\exp\Big\{-\frac{ \lambda}{T}\Big(\int_0^TJ'_tt^{H-\frac12}dt\Big)^2\Big\}.$$
     Random  variable $\int_0^TJ'_tt^{H-\frac12}dt$ is Gaussian with  mean $M(T)$ and variance $\sigma^2(T)$, say.
     Note that for a Gaussian random variable $\xi=m+\sigma N(0,1)$ we can easily calculate \begin{equation}\label{sigma}E\exp\{- a\xi^2\}=\Big(2a\sigma^2+1\Big)^{-\frac12}\exp\Big\{-\frac{am^2}{2a\sigma^2+1}\Big\}.\end{equation}
     This value attains its maximum at the point  $m=0$. Hence,  it is sufficient to prove that $$\lim_{T\rightarrow\infty}\Theta_T^{(2)}(\lambda):=\lim_{T\rightarrow\infty}E\exp\Big\{-\frac{ \lambda}{T}\Big(\int_0^TH'_tt^{H-\frac12}dt\Big)^2\Big\}=0.$$
     However, it follows from \eqref{sigma} that $\Theta_T^{(2)}(\lambda)=\Big(\frac{2\lambda\sigma^2_T}{T}+1\Big)^{-\frac12}$, therefore to prove the strong consistency of the maximum likelihood estimate $\theta_T^{(1)}$, we only need to analyze the asymptotic behavior of  $\sigma^2_T$. More specifically, we need to prove that $\frac{\sigma^2_T}{T}\rightarrow \infty$ as $T\rightarrow \infty$.
     In what follows we apply the following  formulae from \cite{nvv} and \cite{mmv} for   Wiener integrals w.r.t. the fractional Brownian motion
    \begin{equation*}\begin{gathered}E\int_0^{t_1}g(s)dB_s^H\int_0^{t_2}h(s)dB_s^H
    =H(2H-1)\int_0^{t_1}\int_0^{t_2}g(s_1)h(s_2)|s_1-s_2|^{2H-2}ds_1ds_2
    \\\leq C(H)||g||_{L_{\frac1H}[0,t_1]}||h||_{L_{\frac1H}[0,t_2]}.\end{gathered}\end{equation*}

     (a) Let $\theta<0$. Divide $\int_0^TH'_tt^{H-\frac12}dt$ into two parts:
     $\int_0^TH'_tt^{H-\frac12}dt=H^{(1)}_T+H^{(2)}_T$, where $$H^{(1)}_T=\int_0^Tt^{H-\frac12}\int_0^tl_H(t,s)f(s)G(s)dsdt$$ and $$H^{(2)}_T=\int_0^Tt^{H-\frac12}\int_0^tl_H(t,s)c(s)\gamma(s)dB_s^Hdt.$$

 Since  functions $c$ and $\gamma$ are bounded from below and from above, \begin{equation}\label{tetaless1}\begin{gathered}E\big(H^{(2)}_T\big)^2=C(H)\int_0^T\int_0^T(t_1t_2)^{H-\frac12}
 \int_0^{t_1}\int_0^{t_2}\Pi_{i=1,2}l_H(t_i,s_i)
     (-c(s_i))\gamma(s_i)\\\times|s_1-s_2|^{2H-2}ds_1ds_2dt_1dt_2\asymp  C(H)\int_0^T\int_0^T(t_1t_2)^{H-\frac12}\\\times\int_0^{t_1}\int_0^{t_2}\Pi_{i=1,2}l_H(t_i,s_i)|s_1-s_2|^{2H-2}ds_1ds_2dt_1dt_2\asymp C(H)T^{3}\end{gathered}\end{equation}
as $T\rightarrow\infty$.

     Consider the behavior of $f$. Under  assumption $(B_7)$  terms $s^{-1}c(s)+c'(s)$ vanish at infinity, $\theta a(s)c(s)$ is negative and separated from zero. Therefore, there exist  $C_i>0$, $i=1,2$ and $s_0 >0$ such  that $-C_1\leq f(s)\leq -C_2$ for all $s>s_0$. Boundedness of $f$ implies that   $E(H_T^{(1)})^2$ has the same asymptotic behavior  as

        \begin{equation}\begin{gathered}\label{tetaless2}
       \int_{s_0}^{T}\int_{s_0}^{T}(t_1t_2)^{H-\frac12}\int_{s_0}^{t_1}\int_{s_0}^{t_2}(\Pi_{i=1,2}l_H(t_i,s_i)(-f(s_i)))\\\times
    \Big(\int_{s_0}^{s_1}\int_{s_0}^{s_2}\gamma(u_1)\gamma(u_2)\exp\Big\{\theta\Big(\int_{u_1}^{s_1}\\+\int_{u_2}^{s_2}\Big)a(v)dv\Big\}
    |u_1-u_2|^{2H-2}du_1du_2\Big)ds_1ds_2dt_1dt_2\\\geq C(H)\int_{s_0}^{T}\int_{s_0}^{T}(t_1t_2)^{H-\frac12}\int_{s_0}^{t_1}\int_{s_0}^{t_2}(\Pi_{i=1,2}l_H(t_i,s_i))\\\times
    \Big(\int_{s_0}^{s_1}\int_{s_0}^{s_2}
    |u_1-u_2|^{2H-2}du_1du_2\Big)ds_1ds_2dt_1dt_2\asymp C(H)T^5.
    \end{gathered}\end{equation}
    Relations \eqref{tetaless1} and \eqref{tetaless2} mean that the asymptotic behavior of $\sigma^2_T$ is $\sigma^2_T\asymp C(H)T^5$ and $\frac{\sigma^2_T}{T}\rightarrow\infty$ as $T\rightarrow\infty$.

    (b) Let $\theta>0$. This case is more involved. The asymptotic behavior of $E(H_T^{(2)})^2$ is the same as before, $C(H)T^3$, since it does not depend on $\theta$. As for $E(H_T^{(1)})^2$, denote $K'_t=\int_0^tl_H(t,s)f(s)G(s)ds$, then  $$H^{(1)}_T=\int_0^TK'_tt^{H-\frac12}dt=T^{H-\frac12}K_T-\Big(H-\frac12\Big)\int_0^Tt^{H-\frac32}K_tdt.$$
    In addition, denote $r(t)=\exp\{-\theta\int_0^ta(s)ds\}$, $\psi(t)=f(t)\exp\{\theta\int_0^ta(s)ds\}$. Applying Fubini theorem several times, we obtain that \begin{equation*}\begin{gathered}T^{H-\frac12}K_T-\Big(H-\frac12\Big)\int_0^Ts^{H-\frac32}K_sds
    \\=T^{H-\frac12}\int_0^Tl_H(T,t)f(t)G(t)dt-\Big(H-\frac12\Big)\int_0^Tt^{H-\frac32}\int_0^tl_H(t,s)f(s)G(s)dsdt\\=
    T^{H-\frac12}\int_0^Tl_H(T,t)\psi(t)\int_0^tr(s)dB_s^Hdt\\-\Big(H-\frac12\Big)\int_0^T
    t^{H-\frac32}\int_0^tl_H(t,u)\psi(u)\int_0^ur(s)dB_s^Hdudt\\=\int_0^Tr(s)\int_s^Tl_H(T,t)\psi(t)dtdB_s^HT^{H-\frac12}-
    \\\Big(H-\frac12\Big)\int_0^Tr(s)\int_s^Tt^{H-\frac32}\int_s^tl_H(t,u)\psi(u)dudtdB^H_s
    \\=\int_0^Tr(s)\Big(T^{H-\frac12}\int_s^Tl_H(T,t)\psi(t)dt
    \\-\Big(H-\frac12\Big)\int_s^Tt^{H-\frac32}\int_s^tl_H(t,u)\psi(u)dudt\Big)dB^H_s.
    \end{gathered}\end{equation*}

   Denote \begin{equation*}\begin{gathered}F(T,s)= T^{H-\frac12}\int_s^Tl_H(T,t)\psi(t)dt
    -\Big(H-\frac12\Big)\int_s^Tt^{H-\frac32}\int_s^tl_H(t,u)\psi(u)dudt\\=\int_s^T
    t^{\frac12-H}e^{\theta\int_0^ta(s)ds}f(t)\Big(T^{H-\frac12}(T-t)^{\frac12-H}\\-\Big(H-\frac12\Big)
    \int_t^Tu^{H-\frac32}(u-t)^{\frac12-H}du\Big)dt:=F_1(T,s)-F_2(T,s).
    \end{gathered}\end{equation*}
    and \begin{equation*}\begin{gathered}F^+(T,s)=F_1(T,s)+F_2(T,s).
    \end{gathered}\end{equation*}
    Function $f$ is bounded,  positive for $s>s_0$ and  separated from zero.  For the sake of technical simplicity, we can put $f(t)=a(t)\equiv 1$. Besides, we can omit the constant multiplier $c_H$.  Then \begin{equation*}\begin{gathered}0\leq  E(H^{(1)}_T)^2=\int_0^T\int_0^Te^{\theta s}e^{\theta t}F(T,s)F(T,t)|s-t|^{2H-2}dsdt\\\leq \int_0^{T}\int_0^{T}e^{\theta s}e^{\theta t}F^+(T,s)F^+(T,t)|s-t|^{2H-2}dsdt.\end{gathered} \end{equation*}
    Consider  the terms containing $F_1(T,s)F_1(T,t)$:
    \begin{equation*}\begin{gathered} I_1=\int_0^T\int_0^Te^{\theta s}e^{\theta t}F_1(T,s)F_1(T,t)|s-t|^{2H-2}dsdt\\=T^{2H-1}\int_0^{T}\int_0^{T}e^{\theta s}e^{\theta t}\int_s^T u^{\frac12-H} (T-u)^{\frac12-H}e^{-\theta u}du\\\times\int_t^T v^{\frac12-H} (T-v)^{\frac12-H}e^{-\theta v}dv|s-t|^{2H-2}dsdt\\
    \leq T^{2H-1}\int_0^{T}\int_0^{T}(st)^{\frac12-H}\int_s^T  (T-u)^{\frac12-H}e^{-\theta (u-s)}du\\\times\int_t^T (T-v)^{\frac12-H}e^{-\theta (v-t)}dv|s-t|^{2H-2}dsdt.\end{gathered} \end{equation*}
    Applying H\"{o}lder inequality we conclude that integral $\int_s^T  (T-u)^{\frac12-H}e^{-\theta (u-s)}du$ admits the following bound:
    \begin{equation*}\begin{gathered} \int_s^T  (T-u)^{\frac12-H}e^{-\theta (u-s)}du
    \\\leq \int_s^{\frac{s+T}{2}}(T-u)^{\frac12-H}e^{-\theta (u-s)}du+\int_{\frac{s+T}{2}}^T(T-u)^{\frac12-H}e^{-\theta (u-s)}du\\\leq 2^{H-\frac12}(T-s)^{\frac12-H}\int_s^{\frac{s+T}{2}}e^{-\theta (u-s)}du+\Big(\int_{\frac{s+T}{2}}^T(T-u)^{1-2H}du\Big)^{\frac12}\Big(\int_{\frac{s+T}{2}}^Te^{-\theta (u-s)}du\Big)^{\frac12}\\\leq C(H)\Big((T-s)^{\frac12-H}+(T-s)^{1-H}\Big).\end{gathered} \end{equation*} Therefore \begin{equation*}\begin{gathered}I_1\leq C(H)\Big(T^{2H-1}\int_0^{T}\int_0^{T}(st)^{\frac12-H}
    ((T-t)(T-s))^{\frac12-H}|s-t|^{2H-2}dsdt\\+T\int_0^{T}\int_0^{T}(st)^{\frac12-H}|s-t|^{2H-2}dsdt\Big)
    \leq C(H)T^2.\end{gathered} \end{equation*}

    Furthermore,  function $e^{\theta s}F_2(T,s)$ admits the following  bounds: \begin{equation*}\begin{gathered}e^{\theta s}F_2(t,s)\leq C(H)s^{\frac12-H}\int_s^Tt^{H-\frac32}(T-t)^{\frac32-H}e^{-\theta(t-s)}dt\\\leq C(H)T^{\frac32-H}s^{\frac12-H}\int_s^Tt^{H-\frac32}e^{-\theta(t-s)}dt. \end{gathered} \end{equation*}
    Note that  function $\int_s^Tt^{H-\frac32}e^{-\theta(t-s)}dt$ decreases in $s$ since its derivative equals $e^{\theta s}(\int_s^Tt^{H-\frac32}e^{-\theta t}dt-s^{H-\frac32})<0.$ Therefore, $$e^{\theta s}F_2(t,s)\leq C(H)T^{\frac32-H}s^{\frac12-H}\int_0^Tt^{H-\frac32}e^{-\theta t}dt\leq C(H)T^{\frac32-H}s^{\frac12-H}.$$
    The latter implies  that the term containing $F_2(T,s)F_2(T,t)$ admits the following bounds: \begin{equation*}\begin{gathered} I_2=\int_0^T\int_0^Te^{\theta s}e^{\theta t}F_2(T,s)F_2(T,t)|s-t|^{2H-2}dsdt
    \\\leq C(H)T^{3-2H}\int_0^T\int_0^T (st)^{\frac12-H}|s-t|^{2H-2}dsdt\leq C(H)T^{4-2H}.\end{gathered} \end{equation*}
    So,    $E(H_T^{(1)})^2\asymp C(H)T^{4-2H}$ asymptotically and if we compare this to asymptotical behavior of $E(H_T^{(2)})^2\asymp C(H)T^3$, we can conclude that $\frac{\sigma^2_T}{T}\asymp C(H)T^2 \rightarrow \infty$ as $T\rightarrow \infty$.

    (c) Let $\theta=0$. Then it is  easy to verify that $E(H_T^{(1)})^2\asymp C(H)T$ and we can refer to the case $\theta>0$.
    \end{proof}
    \begin{remark} The assumptions of the theorem are fulfilled, for example, if  $a(s)=-1$, $b(s)=b\in \mathbb{R}$ and $\gamma(s)=\gamma>0$. In this case we deal with a standard Ornstein-Uhlenbeck process $X$ with constant coefficients that satisfies the equation    $$dX_t = \theta(b-X_t )dt + \gamma dB^H_t,\,t\geq 0.$$
    This model with constant coefficients was studied in \cite{KlLeBr} where the Laplace transform $\Theta_T(\lambda)$ was calculated explicitly and strong consistency of $\theta_T^{(1)}$ was established. Therefore, our results generalize the statement of strong consistency to the case of variable coefficients.
     \end{remark}

     II. Consider a simple version of the Ornstein-Uhlenbeck model where $a=\gamma=1$, $b=x_0=0$. The SDE has a form $dX_t = \theta X_tdt + dB^H_t,\,t\geq 0 $ with evident solution $X_t=e^{\theta t}\int_0^te^{-\theta s}dB_s^H$. Let us construct an estimate
     which is a modification of $\theta_T^{(2)}$:
     \begin{equation*}\begin{gathered}\widetilde{\theta}_T^{(2)}=\frac{\int_0^T e^{-2\theta s} X_sdX_s}{\int_0^T e^{-2\theta s} X_s^2ds}=\theta+\frac{\Big(\int_0^Te^{-\theta s}dB_s^H\Big)^2}{\int_0^T\Big(\int_0^se^{-\theta u}dB_u^H\Big)^2ds}.\end{gathered} \end{equation*}
     \begin{theorem} Let $\theta>0$. Then   estimate ${\widetilde{\theta}}_T^{(2)}$ is strongly consistent as $T\rightarrow\infty$.
     \end{theorem}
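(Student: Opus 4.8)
The plan is to reduce the claim to the almost sure convergence of a single exponentially weighted Wiener integral. Write $\zeta_s := \int_0^s e^{-\theta u}\,dB_u^H$, so that the stochastic representation supplied in the statement becomes
$$\widetilde{\theta}_T^{(2)} = \theta + \frac{\zeta_T^2}{\int_0^T \zeta_s^2\,ds};$$
strong consistency is therefore equivalent to $\frac{\zeta_T^2}{\int_0^T\zeta_s^2\,ds}\to 0$ a.s.\ as $T\to\infty$. First I would use the covariance formula for Wiener integrals w.r.t.\ fBm quoted earlier to write
$$E\zeta_T^2 = H(2H-1)\int_0^T\int_0^T e^{-\theta(u+v)}\abs{u-v}^{2H-2}\,du\,dv.$$
Since $\theta>0$, the limiting value $\sigma^2 := H(2H-1)\int_0^\infty\int_0^\infty e^{-\theta(u+v)}\abs{u-v}^{2H-2}\,du\,dv$ is finite and strictly positive, so $e^{-\theta u}$ is integrable against $B^H$ on the whole half-line and $\zeta_\infty := \int_0^\infty e^{-\theta u}\,dB_u^H$ is a well-defined, non-degenerate centered Gaussian variable with $\zeta_T\to\zeta_\infty$ in $L^2$.

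The heart of the argument, and the main obstacle, is to upgrade this $L^2$ convergence to almost sure convergence; since $\zeta_T$ is not a martingale in $T$, one cannot invoke martingale convergence and must instead argue as in the proof of Theorem~\ref{theorem33.2}. For integer $n$, a translation of the integration domain gives the tail estimate
$$E(\zeta_n-\zeta_\infty)^2 = E\Big(\int_n^\infty e^{-\theta u}\,dB_u^H\Big)^2 = \sigma^2 e^{-2\theta n},$$
which is summable, so $\zeta_n\to\zeta_\infty$ a.s.\ along the integers by the Borel--Cantelli lemma. To pass to the continuum I would bound the oscillation on $[n,n+1]$ exactly as the residual $R_N$ was controlled in Theorem~\ref{theorem33.2}: by Theorem~1.10.3 of \cite{mishura} combined with the H\"older inequality,
$$E\Big(\sup_{T\in[n,n+1]}\Big|\int_n^T e^{-\theta u}\,dB_u^H\Big|\Big)^r \le C(H,r)\norm{e^{-\theta\,\cdot}}^r_{L_{\frac1H}[n,n+1]}\le C(H,r)e^{-\theta r n},$$
again summable for any $r>0$; a second Borel--Cantelli step yields $\sup_{T\in[n,n+1]}\abs{\zeta_T-\zeta_n}\to 0$ a.s., and hence $\zeta_T\to\zeta_\infty$ a.s.\ as $T\to\infty$.

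It then remains to assemble the pieces. Non-degeneracy of $\zeta_\infty$ (i.e.\ $\sigma^2>0$) gives $\zeta_\infty^2>0$ a.s.; combined with $\zeta_s\to\zeta_\infty$ a.s.\ this yields $\zeta_s^2\to\zeta_\infty^2>0$ a.s., so that $\frac1T\int_0^T\zeta_s^2\,ds\to\zeta_\infty^2$ a.s.\ and in particular $\int_0^T\zeta_s^2\,ds\to\infty$ a.s. On the other hand the numerator $\zeta_T^2\to\zeta_\infty^2<\infty$ a.s.\ stays bounded. Dividing, the correction term tends to $0$ a.s., which establishes $\widetilde{\theta}_T^{(2)}\to\theta$ a.s., i.e.\ strong consistency. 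The only genuinely delicate point is the almost sure convergence of the non-martingale integral $\zeta_T$; once that is secured, the divergence of the denominator and the boundedness of the numerator are immediate.
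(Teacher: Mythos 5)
Your proof is correct, but it follows a genuinely different route from the paper's. The paper argues in two independent steps: (i) it bounds the numerator pathwise, uniformly in $T$, by integrating by parts and invoking Remark \ref{prosto} (the a.s.\ growth bound $\sup_{0\le s\le t}|B^H_s|\le \xi\,((t^{H}(\log t)^{p})\vee 1)$), obtaining $|\int_0^Te^{-\theta s}dB_s^H|\le\zeta$ for a single random variable $\zeta$ independent of $T$; and (ii) it proves that the denominator diverges a.s.\ by the same Laplace-transform device used for Theorem \ref{ornstein}: by Cauchy--Schwarz, $E\exp\{-\lambda\int_0^T\zeta_s^2\,ds\}\le(2\lambda\sigma_T^2/T+1)^{-1/2}$ with $\sigma_T^2=E\big(\int_0^T\zeta_s\,ds\big)^2\gtrsim T^{2H}\theta^{-2}$, so the transform vanishes in the limit and $\int_0^\infty\zeta_s^2\,ds=\infty$ a.s. You instead establish the stronger statement that $\zeta_T\to\zeta_\infty$ a.s., where $\zeta_\infty$ is a nondegenerate centered Gaussian variable (Borel--Cantelli along integers using the exact tail variance $\sigma^2e^{-2\theta n}$, plus the oscillation control via Theorem 1.10.3 of \cite{mishura}, exactly as the residual $R_N$ was handled in Theorem \ref{theorem33.2}); consistency then follows because the numerator converges while the denominator grows linearly by the Ces\`aro argument, $\frac1T\int_0^T\zeta_s^2\,ds\to\zeta_\infty^2>0$ a.s. All your individual steps check out: the translation identity for $E(\zeta_n-\zeta_\infty)^2$ is correct, the maximal-inequality bound $\|e^{-\theta\,\cdot}\|_{L_{1/H}[n,n+1]}\le e^{-\theta n}$ is valid, and nondegeneracy ($\sigma^2>0$, using $H>\tfrac12$) rules out a vanishing limit. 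What your approach buys is quantitative information the paper's does not provide: the denominator grows a.s.\ at the exact rate $T\zeta_\infty^2$, so the error term decays like $1/T$, whereas the paper's Laplace-transform argument gives only divergence. What the paper's approach buys is that it never needs almost sure convergence of the non-martingale integral $\zeta_T$ (only distributional facts about Gaussian functionals), and it reuses machinery (Remark \ref{prosto} and the moment generating function bound) already set up for the Ornstein--Uhlenbeck theorem, making it shorter within the paper's own framework. Both arguments use $\theta>0$ in an essential way.
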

\begin{proof} Applying Remark \ref{prosto} yields $$|\int_0^Te^{-\theta s}dB_s^H|\leq e^{-\theta T}|B_T^H|+\int_0^T e^{-\theta s}|B_s^H|ds\leq \xi \Big(e^{-\theta T}T^{H+p}+\int_0^T e^{-\theta s}s^{H+p}ds\Big)\leq \zeta,$$
where $\zeta$ is a random variable independent of $T$. So,
it is sufficient to establish that $\int_0^\infty\Big(\int_0^se^{-\theta u}dB_u^H\Big)^2ds=0$ to prove the strong consistency of $\widetilde{\theta}_T^{(2)}$. Similarly to the proof of Theorem \ref{ornstein}, we can consider the moment generation function \begin{equation*}\begin{gathered}E\exp\{-\lambda\int_0^T\Big(\int_0^se^{-\theta u}dB_u^H\Big)^2ds\}\leq E\exp\Big\{-\lambda T^{-1}\Big(\int_0^T\int_0^se^{-\theta u}dB_u^Hds\Big)^2\Big\}\\=\Big(\frac{2\lambda\sigma^2_T}{T}+1\Big)^{-\frac12},\end{gathered} \end{equation*}
where \begin{equation*}\begin{gathered}\sigma^2_T=E\Big(\int_0^T\int_0^se^{-\theta u}dB_u^Hds\Big)^2=
\int_0^T\int_0^T\int_0^s\int_0^te^{-\theta u-\theta v}|u-v|^{2H-2}dudvdsdt
\\=T^{2H+2}\int_0^1\int_0^1\int_0^s\int_0^te^{-T(\theta u+\theta v)}|u-v|^{2H-2}dudvdsdt
\\\geq T^{2H+2}\int_0^1\int_0^1\int_0^s\int_0^te^{-T(\theta u+\theta v)}dudvdsdt\\=T^{2H}\theta^{-2}\Big(\int_0^1\Big(1-e^{-\theta sT}\Big)ds\Big)^2\asymp T^{2H}\theta^{-2},\end{gathered} \end{equation*}
whence the proof follows.
\end{proof}

\appendix
\section{}

 To apply Theorem \ref{teo2} to the fractional derivative of the fractional Brownian motion and to prove Theorem \ref{teo4}, we need an auxiliary result. In what follows we denote by $C(H,\alpha)$ a constant depending only on $H$ and $\alpha$ and not on  other parameters.

\begin{lemma}\label{lemGaus1}
Let $z_i > 0$ for $i=1,2$. In addition, let $0 <{H}<{1},\, 1-H<\alpha<1$ and
$$
I = {z_2}^{2(H + \alpha - 1)} + {z_1}^{2(H + \alpha - 1)} + \frac{|z_2 - z_1|^{2H} - z_1^{2H} - z_2^{2H}}{(z_1z_2)^{1 - \alpha}}.
$$
Then $I \leq C(H,\alpha)|z_2 - z_1|^{2(H + \alpha - 1)}.$

\end{lemma}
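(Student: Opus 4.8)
The plan is to exploit homogeneity to collapse $I$ to a function of a single variable and then reduce the whole statement to the boundedness of a continuous function on a compact interval. First, observe that both $I$ and the right-hand side $|z_2-z_1|^{2(H+\alpha-1)}$ are homogeneous of degree $\mu:=2(H+\alpha-1)$ in $(z_1,z_2)$: the first two summands of $I$ scale this way trivially, while the third has a numerator of degree $2H$ over a denominator of degree $2(1-\alpha)$, i.e.\ degree $2H-2(1-\alpha)=\mu$. Here $\mu>0$ precisely because $\alpha>1-H$. Since $I$ is symmetric in $z_1,z_2$, I may assume $0<z_1\le z_2$ and divide through by $z_2^{\mu}$, setting $z_2=1$ and $x:=z_1/z_2\in(0,1]$. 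The claim then becomes: the function
$$g(x)=\frac{\tilde I(x)}{(1-x)^{\mu}},\qquad \tilde I(x):=1+x^{\mu}+\bigl((1-x)^{2H}-x^{2H}-1\bigr)x^{\alpha-1},$$
is bounded by a constant $C(H,\alpha)$ on $(0,1)$.

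Since $\tilde I$ is continuous and $(1-x)^{\mu}>0$ on the open interval, $g$ is continuous there and it suffices to control it at the two endpoints. As $x\downarrow 0$ one expands $(1-x)^{2H}-x^{2H}-1=-2Hx-x^{2H}+O(x^2)$, so the third summand of $\tilde I$ equals $-2Hx^{\alpha}-x^{2H+\alpha-1}+O(x^{1+\alpha})\to 0$, all exponents being positive because $\alpha>0$ and $2H+\alpha-1>H>0$. Hence $\tilde I(x)\to 1$ and $(1-x)^{\mu}\to 1$, so $g$ extends continuously to $x=0$ with $g(0)=1$.

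The delicate endpoint, and the main obstacle, is $x\uparrow 1$, where numerator and denominator vanish together. Put $\varepsilon:=1-x\downarrow 0$ and split $\tilde I$ into the singular contribution $\varepsilon^{2H}(1-\varepsilon)^{\alpha-1}=\varepsilon^{2H}+O(\varepsilon^{2H+1})$ and an analytic remainder $R(\varepsilon)=1+(1-\varepsilon)^{\mu}-(1-\varepsilon)^{2H+\alpha-1}-(1-\varepsilon)^{\alpha-1}$ built from powers of $(1-\varepsilon)$. Expanding $R$ to first order, its constant term is $1+1-1-1=0$ and its coefficient of $\varepsilon$ is $-\mu+(2H+\alpha-1)+(\alpha-1)=-\mu+2H+2\alpha-2=0$; the crucial point is this double cancellation, which leaves $R(\varepsilon)=O(\varepsilon^2)$. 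Therefore $\tilde I(x)=\varepsilon^{2H}+O(\varepsilon^2)$, and since $H<1$ gives $2H<2$ the leading term is $\varepsilon^{2H}$. Consequently
$$g(x)=\frac{\tilde I(x)}{\varepsilon^{\mu}}\sim \varepsilon^{2H-\mu}=\varepsilon^{2(1-\alpha)}\longrightarrow 0,$$
where the exponent is positive exactly because $\alpha<1$. Thus $g$ also extends continuously to $x=1$ with $g(1)=0$; being continuous on the compact interval $[0,1]$ it is bounded, and its maximum is the desired $C(H,\alpha)$. Undoing the normalization yields $I\le C(H,\alpha)|z_2-z_1|^{\mu}$.

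A useful guide throughout is the identity $I=E\bigl(z_2^{\alpha-1}B^H_{z_2}-z_1^{\alpha-1}B^H_{z_1}\bigr)^2$, which follows at once from property (c) of Definition \ref{def2.1}; it shows $I\ge 0$ and explains why the two individually divergent pieces near $x=0$, and the two vanishing pieces near $x=1$, must combine. The only quantitative inputs are the cancellation of the constant and linear terms at $x=1$ together with the two sign conditions $\alpha>1-H$ (giving $\mu>0$) and $\alpha<1$ (giving $2H-\mu>0$), which are precisely the standing hypotheses on $\alpha$.
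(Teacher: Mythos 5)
Your proof is correct, and it takes a recognizably different route from the paper's, even though both ultimately rest on the same compactness principle (reduce to one scalar variable, check continuity plus finite limits at the two endpoints). The paper's first move is an algebraic regrouping: for $z_2>z_1$ it writes
$I=\bigl(z_2^{H+\alpha-1}-z_1^{H+\alpha-1}\bigr)^2+\frac{(z_2-z_1)^{2H}-(z_2^H-z_1^H)^2}{(z_1z_2)^{1-\alpha}}=:I_1+I_2$,
so both pieces are manifestly nonnegative; $I_1$ is then bounded by $(z_2-z_1)^{2(H+\alpha-1)}$ via the elementary inequality $b^r-a^r\le(b-a)^r$ for $0<r\le1$, and only $I_2$ needs the one-variable reduction $I_2=(z_2-z_1)^{2(H+\alpha-1)}f(u)$, $u=z_2/z_1\in(1,\infty)$, where the limits of $f$ at $u\to1$ and $u\to\infty$ follow from simple power comparisons with no cancellation to track — the regrouping has already absorbed the cancellation into the square $(z_2^H-z_1^H)^2$. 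You skip the regrouping entirely and apply the scaling reduction to all of $I$ at once (your $x$ is $1/u$); the price is paid at the coincidence endpoint $x\uparrow1$, where you must verify by hand the second-order Taylor cancellation in $R(\varepsilon)$ — vanishing of both the constant and the linear coefficient — which is exactly the content the paper's identity encodes algebraically. Your verification of that cancellation is correct, and all the exponent sign checks ($\mu>0$ from $\alpha>1-H$, $2(1-\alpha)>0$ from $\alpha<1$, and $2-\mu>0$ from $H+\alpha<2$) are in order. What each approach buys: the paper's is lighter at the delicate endpoint and yields the explicit constant $1$ for the $I_1$ part, while yours is more mechanical (pure homogeneity plus Taylor expansion, no algebraic trick to guess), and your side remark that $I=E\bigl(z_2^{\alpha-1}B^H_{z_2}-z_1^{\alpha-1}B^H_{z_1}\bigr)^2$ — a fact the paper never states, although it is precisely how the lemma is applied in the Appendix — is a genuinely valuable addition, since it shows $I\ge0$ a priori and explains why the cancellations at both endpoints must occur.
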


\begin{proof} Let   $z_2>z_1>0$ (the case $z_1>z_2>0$ can be  dealt with in a similar way). We can rewrite $I$ as
\begin{equation*}
\begin{gathered}
I= (z_2^{H+\alpha-1}-z_1^{H+\alpha-1})^2+2(z_1z_2)^{H+\alpha-1}\\+((z_2-z_1)^{2H}-(z_2^H-z_1^H)^{2}-2(z_1z_2)^H)(z_1z_2)^{\alpha-1} \\
=(z_2^{H+\alpha-1}-z_1^{H+\alpha-1})^2+\frac{(z_2-z_1)^{2H}-(z_2^H-z_1^H)^{2}}{(z_1z_2)^{1-\alpha}}=I_1+I_2.
\end{gathered}
\end{equation*}
Recall a  simple inequality $b^{r} - a^{r} \leq (b - a)^{r}$ for $b > a,\ 0< r \leq 1$. Since $0<H + \alpha - 1 < 1$, we can estimate $I_1$ by $( z_2 - z_1)^{2(H + \alpha - 1)}.$
Furthermore,  $I_2$ can be rewritten as
$$
I_2 = (z_2 - z_1)^{2(H + \alpha - 1)}\frac{|z_2 - z_1|^{2H} - (z_2^{H} - z_1^{H})^2}{(z_1z_2)^{1 - \alpha}(z_2 - z_1)^{2(H + \alpha - 1)}} = (z_2 - z_1)^{2(H + \alpha - 1)}f(u),
$$
where $u = \frac{z_2}{z_1} > 1$, $f(u) = \frac{(u - 1)^{2H} - (u^H - 1)^2}{u^{1 - \alpha}(u - 1)^{2(H + \alpha - 1)}} \geq 0.$\\

Calculate the limit of  function $f$ at 1:
$$
\lim_{u\rightarrow 1}\limits f(u) = \lim_{u\rightarrow 1}\limits \frac{(u - 1)^{2H} - (u^H - 1)^2}{(u - 1)^{2(H + \alpha - 1)}}.
$$
Here
$$
\lim_{u\rightarrow 1}\limits \frac{(u - 1)^{2H}}{(u - 1)^{2(H + \alpha - 1)}} = \lim_{u\rightarrow 1}\limits (u - 1)^{2 - 2\alpha} = 0,
$$
and
$$
\lim_{u\rightarrow 1}\limits\frac{(u^H - 1)^{2}}{(u - 1)^{2(H + \alpha - 1)}} = H^2\lim_{u\rightarrow 1}\limits (u - 1)^{4 - 2H - 2\alpha} = 0,
$$
since
$\lim_{u\rightarrow 1}\limits \frac{u^{H}-1}{u-1}=H.$
Calculate the limit of the function $f$ at infinity:
$$
\begin{array}{rcl}
0 &\leq& \lim_{u\rightarrow \infty}\limits f(u) = \lim_{u\rightarrow \infty}\limits \frac{(u - 1)^{2H} - (u^H - 1)^2}{u^{1 - \beta}(u - 1)^{2(H + \alpha - 1)}} \\
&\leq& \lim_{u\rightarrow \infty}\limits \frac{u^{2H} - (u^H - 1)^2}{u^{2H +\alpha- 1}} = \lim_{u\rightarrow \infty}\limits \frac{2u^H - 1}{u^{2H + \alpha - 1}} = 0.
\end{array}
$$
This implies that  function  $f$ is bounded, i.e. there exists $C(H,\alpha)>0$ such that
$$I_2 \leq C(H,\alpha)(z_2 - z_1)^{2(H + \alpha - 1)},$$
and  the proof follows if we combine  the  bounds for $I_1$ and $I_2$.
\end{proof}

 We are now ready to check  conditions $(D_2)$ and $(D_3)$ for the fractional derivative of the fractional Brownian motion.
\begin{lemma}\label{teo1}
Let
$$
X(\textbf{t}) = \frac{B_{t_1}^H - B_{t_2}^H}{({t_1} - {t_2})^{1 - \alpha}} + \int_{{t_2}}^{{t_1}}\frac{B_u^H - B_{t_2}^H}{(u - {t_2})^{2 - \alpha}}du,
$$
where
$
0 \leq{{t_2}}<{{t_1}},  \ 0 <{H}<{1}, 1-H<\alpha<1.
$

Then the following bounds hold:

1) for any $0\leq\ t_2<t_1$ $$\left(E(X(\textbf{t}))^2\right)^{\frac{1}{2}} \leq C(H,\alpha)({t_1} - {t_2})^{H + \alpha - 1};$$

2) (a) Let $H+\alpha\leq\frac32$. Then for any $0\leq\ t_2<t_1$, $0\leq\ s_2<s_1$ and any $0 < \varepsilon < (H + \alpha - 1)\wedge\frac 12$
\begin{equation*}
\begin{gathered}
(E|X(\textbf{t}) - X(\textbf{s})|^2)^{\frac{1}{2}}\\
\leq C(H,\alpha)\big(1 + \varepsilon^{-1}\big)(|t_1 - s_1|\vee|t_2 - s_2|))^{H + \alpha - 1- \varepsilon}(t_1\vee s_1))^{\varepsilon}
\end{gathered}
\end{equation*}
with $C(H,\alpha)$ not depending on $X$, its arguments  and $\varepsilon$.

(b) Let $H+\alpha>\frac32$. Then for any $0\leq\ t_2<t_1$, $0\leq\ s_2<s_1$ \begin{equation*}
\begin{gathered}
(E|X(\textbf{t}) - X(\textbf{s})|^2)^{\frac{1}{2}}
\leq C(H,\alpha)(|t_1 - s_1|\vee|t_2 - s_2|)^{\frac12}(t_1\vee s_1)^{H+\alpha-\frac32}.
\end{gathered}
\end{equation*}
\end{lemma}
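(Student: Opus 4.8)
The plan is to split $X(\textbf{t})=P(\textbf{t})+Q(\textbf{t})$ into the boundary term $P(\textbf{t})=(B^H_{t_1}-B^H_{t_2})(t_1-t_2)^{\alpha-1}$ and the integral term $Q(\textbf{t})=\int_{t_2}^{t_1}(B^H_u-B^H_{t_2})(u-t_2)^{\alpha-2}\,du$, and to estimate both required bounds through the triangle inequality in $L^2(\Omega)$ together with Minkowski's integral inequality applied to $Q$. Bound 1) is then immediate: $\norm{P(\textbf{t})}_2=(t_1-t_2)^{H+\alpha-1}$ from $E(B^H_{t_1}-B^H_{t_2})^2=(t_1-t_2)^{2H}$, while Minkowski gives $\norm{Q(\textbf{t})}_2\le\int_{t_2}^{t_1}(u-t_2)^{H+\alpha-2}\,du=(H+\alpha-1)^{-1}(t_1-t_2)^{H+\alpha-1}$, the exponent $H+\alpha-2\in(-1,0)$ being integrable at the left endpoint precisely because $\alpha>1-H$. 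Adding the two yields bound 1) with an explicit $C(H,\alpha)$.

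For bound 2) I would reduce the two-dimensional increment to one-coordinate moves by the telescoping identity
$$X(\textbf{t})-X(\textbf{s})=\bigl(X(t_1,t_2)-X(t_1,s_2)\bigr)+\bigl(X(t_1,s_2)-X(s_1,s_2)\bigr),$$
assuming w.l.o.g.\ $t_1\ge s_1$ so that the intermediate point $(t_1,s_2)$ lies in $\mathbf{T}$, and then decompose each difference as $P+Q$. The crucial observation is that the variance of every one-coordinate increment of $P$ is exactly the quantity $I$ of Lemma~\ref{lemGaus1}: for the upper move $X(t_1,s_2)-X(s_1,s_2)$ one expands $E(P(\textbf{t})-P(\textbf{s}))^2$ with the fBm covariance and recovers, with $z_2=t_1-s_2,\ z_1=s_1-s_2$, the expression $z_2^{2(H+\alpha-1)}+z_1^{2(H+\alpha-1)}+(z_1z_2)^{\alpha-1}(\abs{z_2-z_1}^{2H}-z_1^{2H}-z_2^{2H})$; for the lower move one gets the same with $z_2=t_1-t_2,\ z_1=t_1-s_2$. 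In both cases $\abs{z_2-z_1}=\abs{t_1-s_1}$ or $\abs{t_2-s_2}$, hence $\le m(\textbf{t},\textbf{s})$, so Lemma~\ref{lemGaus1} bounds each such variance by $C(H,\alpha)\,m(\textbf{t},\textbf{s})^{2(H+\alpha-1)}$. The upper move of $Q$ is equally easy: it equals $\int_{s_1}^{t_1}(B^H_u-B^H_{s_2})(u-s_2)^{\alpha-2}\,du$, and Minkowski followed by subadditivity of $x\mapsto x^{H+\alpha-1}$ gives $C(H,\alpha)\,m(\textbf{t},\textbf{s})^{H+\alpha-1}$.

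The remaining piece, and the main obstacle, is the lower-endpoint move of the integral term, $Q(t_1,t_2)-Q(t_1,s_2)$, because there the base point of the singular kernel itself shifts. Writing $c=t_1$, $a=t_2\wedge s_2$, $a'=t_2\vee s_2$, $h=a'-a$, I would separate the near piece $\int_a^{a'}(B^H_u-B^H_a)(u-a)^{\alpha-2}\,du$ (bounded by Minkowski by $C h^{H+\alpha-1}$) from the tail $\int_{a'}^c[\,\cdot\,]\,du$, and in the tail use $B^H_u-B^H_a=(B^H_u-B^H_{a'})+(B^H_{a'}-B^H_a)$ to split off a factor $B^H_{a'}-B^H_a$ (contributing $\norm{B^H_{a'}-B^H_a}_2\int_{a'}^c(u-a)^{\alpha-2}du\le h^{H}\cdot C h^{\alpha-1}$) together with a kernel-difference integral $\int_{a'}^c(u-a')^{H}\bigl[(u-a')^{\alpha-2}-(u-a)^{\alpha-2}\bigr]\,du$. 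This last integral is the technical heart: after the substitution $x=u-a'$ and a cut at $x=h$, the near part is $\int_0^h x^{H+\alpha-2}\,dx$, and the far part, via $x^{\alpha-2}-(x+h)^{\alpha-2}\le(2-\alpha)h\,x^{\alpha-3}$ (mean value theorem), is dominated by $(2-\alpha)h\int_h^{\infty} x^{H+\alpha-3}\,dx$, both of order $h^{H+\alpha-1}$ since $H+\alpha-3<-1$. Collecting all pieces delivers the single clean estimate $\norm{X(\textbf{t})-X(\textbf{s})}_2\le C(H,\alpha)\,m(\textbf{t},\textbf{s})^{H+\alpha-1}$, from which both 2a) and 2b) follow at once by writing $m^{H+\alpha-1}=m^{H+\alpha-1-\beta}m^{\beta}\le m^{H+\alpha-1-\beta}(t_1\vee s_1)^{\beta}$, with $\beta=\varepsilon$ when $H+\alpha\le\frac32$ and $\beta=H+\alpha-\frac32$ (so that $H+\alpha-1-\beta=\frac12$) when $H+\alpha>\frac32$.
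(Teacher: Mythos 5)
Your proof is correct, but it takes a genuinely different---and in fact sharper---route than the paper's for the hard part of the argument. The paper also splits $X=X_1+X_2$ and handles the boundary term exactly as you do: your two one-coordinate variances for $P$ are precisely the paper's $I_3$ and $I_4$, both estimated via Lemma \ref{lemGaus1}. The difference is in the integral term. In the configuration $t_2<s_2<s_1<t_1$ the paper bounds the kernel-difference piece $\int_{s_2}^{s_1}\Vert (B_u^H-B_{t_2}^H)(u-t_2)^{\alpha-2}-(B_u^H-B_{s_2}^H)(u-s_2)^{\alpha-2}\Vert_{L_2}\,du$ by applying Lemma \ref{lemGaus1} pointwise in $u$, which forces out the factor $I_{10}=\int_{s_2}^{s_1}(u-s_2)^{-1/2}(u-t_2)^{-1/2}\,du$; this integral diverges logarithmically in $(s_1-s_2)/(s_2-t_2)$, and the paper absorbs the logarithm at the price of an $\varepsilon$, via $I_{10}\le C(1+\varepsilon^{-1})\left(\frac{s_1-s_2}{s_2-t_2}\right)^{\varepsilon}$. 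That is exactly the origin of the factor $(t_1\vee s_1)^{\varepsilon}$ in 2(a), of the $\varepsilon$-dependence of the constant, and of the case split at $H+\alpha=\tfrac32$. Your treatment---splitting $B_u^H-B_a^H=(B_u^H-B_{a'}^H)+(B_{a'}^H-B_a^H)$ in the tail and controlling the kernel difference by the mean value theorem with a cut at $x=h$---avoids the logarithmic divergence entirely: the tail integral $\int_h^{\infty}x^{H+\alpha-3}\,dx$ converges because $H+\alpha<2$, and every piece is of order $h^{H+\alpha-1}$. You therefore obtain the clean estimate $\Vert X(\textbf{t})-X(\textbf{s})\Vert_{L_2}\le C(H,\alpha)\,m(\textbf{t},\textbf{s})^{H+\alpha-1}$, from which 2(a) and 2(b) indeed follow trivially (using $m(\textbf{t},\textbf{s})\le t_1\vee s_1$ and $\varepsilon\le H+\alpha-1$, resp.\ $H+\alpha-\tfrac32\le H+\alpha-1$). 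So your argument proves a strictly stronger statement than the lemma: in the language of the paper, condition $(D_2)$ would hold with $\beta=0$ and $\gamma=H+\alpha-1$ and no case distinction; note that this does not alter the conclusion of Theorem \ref{teo4}, since $1+\beta/\gamma-\delta/\gamma=0$ under either set of exponents. What the paper's route buys is a single uniform computation (Lemma \ref{lemGaus1} used under the integral sign); what yours buys is a sharper modulus of continuity, a constant free of $\varepsilon$, and the elimination of the $H+\alpha\lessgtr\tfrac32$ dichotomy.
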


\begin{proof} The first statement follows immediately from the Minkowski's integral inequality:

\begin{equation*}\begin{gathered}
\left(E(X(\textbf{t}))^2\right)^{\frac{1}{2}} \leq \left(E \left(\frac{B_{t_1}^H - B_{t_2}^H}{(t_1 - t_2)^{1 - \alpha}} \right)^2\right)^{\frac12} + \left(E \left(\int_{t_2}^{t_1} \frac{B_u^H - B_{t_2}^H}{(u - t_2)^{2 - \alpha}}du \right)^2\right)^{\frac12}\\
\leq \left(\frac{(t_1 - t_2)^{2H}}{(t_1 - t_2)^{2(1 - \alpha)}} \right)^{\frac{1}{2}} + \int_{t_2}^{t_1}\left(E \left(\frac{B_u^H - B_{t_2}^H}{(u - {t_2})^{2 - \alpha}} \right)^2\right)^{\frac{1}{2}}du = (t_1 - t_2)^{H + \alpha - 1} +\\
+ \int_{t_2}^{t_1}\left(\frac{(u - t_2)^{2H}}{(u - t_2)^{2(2 - \alpha)}} \right)^{\frac{1}{2}}du
= \frac{\alpha + H}{\alpha + H - 1}(t_1 - t_2)^{H + \alpha - 1}.
\end{gathered}
\end{equation*}
In order to prove the second statement, denote
$
X_1(\textbf{t}) = \frac{B_{t_1}^H - B_{t_2}^H}{(t_1 - t_2)^{1 - \alpha}}$ and $ X_2(\textbf{t})= \int_{t_2}^{t_1}\frac{B_u^H - B_{t_2}^H}{(u - {t_2})^{2 - \alpha}}du.
$
Evidently,   
\begin{equation}\label{5}
(E|X(\textbf{t}) - X(\textbf{s})|^2)^{\frac{1}{2}} \leq (E|X_1(\textbf{t}) - X_1(\textbf{s})|^2)^{\frac{1}{2}}  \\
+(E|X_2(\textbf{t}) - X_2(\textbf{s})|^2)^{\frac{1}{2}}.
\end{equation}
Let $t_1 > s_1$, the opposite case can be considered in a similar way. Then
\begin{equation}\label{6}
\begin{gathered}
(E|X_1(\textbf{t}) - X_1(\textbf{s})|^2)^{\frac{1}{2}} \\
= \left(E \left(\frac{B_{t_1}^H - B_{t_2}^H}{(t_1 - t_2)^{1 - \alpha}} - \frac{B_{t_1}^H - B_{s_2}^H}{(t_1 - s_2)^{1 - \alpha}} + \frac{B_{t_1}^H - B_{s_2}^H}{(t_1 - s_2)^{1 - \alpha}} - \frac{B_{s_1}^H - B_{s_2}^H}{(s_1 - s_2)^{1 - \alpha}} \right)^2 \right)^{\frac{1}{2}}\\
\leq \left(E \left(\frac{B_{t_1}^H - B_{t_2}^H}{(t_1 - t_2)^{1 - \alpha}} - \frac{B_{t_1}^H - B_{s_2}^H}{(t_1 - s_2)^{1 - \alpha}} \right)^2 \right)^{\frac{1}{2}}\\
+ \left(E \left(\frac{B_{t_1}^H - B_{s_2}^H}{(t_1 - s_2)^{1 - \alpha}} - \frac{B_{s_1}^H - B_{s_2}^H}{(s_1 - s_2)^{1 - \alpha}} \right)^2 \right)^{\frac{1}{2}} =: I_3 + I_4.
\end{gathered}
\end{equation}
It is more convenient to estimate the squares $(I_3)^2$ and $(I_4)^2$ from \eqref{6} instead of $ I_3 $  and $ I_4 $. As for  $(I_3)^2$, we can calculate it explicitly and then estimate it with the help of Lemma \ref{lemGaus1};  $(I_4)^2$ can be evaluated similarly.
\begin{equation}\label{8}
\begin{gathered}
(I_3)^2 = (t_1 - t_2)^{2(H + \alpha - 1)}+(t_1 - s_2)^{2(H + \alpha - 1)} - 2\frac{E(B_{t_1}^H - B_{t_2}^H)(B_{t_1}^H - B_{s_2}^H)}{(t_1 - t_2)^{1 - \alpha}(t_1 - s_2)^{1 - \alpha}}\\
 =(t_1 - t_2)^{2(H + \alpha - 1)} + (t_1 - s_2)^{2(H + \alpha - 1)} - \frac{2}{(t_1 - t_2)^{1 - \alpha}(t_1 - s_2)^{1 - \alpha}}\\
 \times[t_1^{2H}-\frac{1}{2}\left(t_2^{2H} + t_1^{2H} - (t_1 - t_2)^{2H}\right)-\frac{1}{2}\left(t_1^{2H}+s_2^{2H}-(t_1-s_2)^{2H}\right)\\
+ \frac{1}{2}\left(t_2^{2H} + s_2^{2H} - |t_2 - s_2|^{2H}\right)] =
 (t_1 - t_2)^{2(H + \alpha - 1)} + (t_1 - s_2)^{2(H + \alpha - 1)} \\
 +\frac{|t_2 - s_2|^{2H} - (t_1 - t_2)^{2H} - (t_1 - s_2)^{2H}}{(t_1 - t_2)^{1 -  \alpha}(t_1 - s_2)^{1 - \alpha}}\leq C(H,\alpha)|t_2- s_2|^{2(H + \alpha - 1)}.
\end{gathered}
\end{equation}
We derive from \eqref{8} that
\begin{equation}\label{riv_a}
I_3 \leq C(H,\alpha)|t_2 - s_2|^{H + \alpha - 1},
\end{equation}
and  similarly,
\begin{equation}\label{riv_b}
I_4 \leq C(H,\alpha)|t_1 - s_1|^{H + \alpha - 1}.
\end{equation}
It follows immediately from  \eqref{riv_a} and \eqref{riv_b} that
\begin{equation}\label{riv_c}
(E|X_1(\textbf{t}) - X_1(\textbf{s})|^2)^{\frac{1}{2}} \leq  C(H,\alpha)\left(|t_1 - s_1|\vee|t_2 - s_2|\right)^{H + \alpha - 1}.
\end{equation}
Now estimate
\begin{equation*}
F(\textbf{t},\textbf{s}) = (E|X_2(\textbf{t}) - X_2(\textbf{s})|^2)^{\frac{1}{2}}\\ =\left(E\left(\int_{t_2}^{t_1}\frac{B_u^H - B_{t_2}^H}{(u - t_2)^{2 - \alpha}}du - \int_{s_2}^{s_1}\frac{B_u^H - B_{s_2}^H}{(u - s_2)^{2 - \alpha}}du\right)^2\right)^\frac{1}{2}.
\end{equation*}
Let, for instance,  $0\leq t_2 < s_2 < s_1 < t_1$ (other types of relation between these points can be handled  similarly). Then
\begin{equation}\label{7}
\begin{gathered}
F(\textbf{t},\textbf{s})\leq \left(E \left(\int_{t_2}^{s_2}\frac{B_u^H - B_{t_2}^H}{(u - t_2)^{2 - \alpha}}du \right)^2 \right)^{\frac{1}{2}}\\
+ \left(E \left(\int_{s_2}^{s_1} \left(\frac{B_u^H - B_{t_2}^H}{(u - t_2)^{2 - \alpha}} - \frac{B_u^H - B_{s_2}^H}{(u - s_2)^{2 - \alpha}} \right)du \right)^2 \right)^{\frac{1}{2}}\\
+ \left(E \left(\int_{s_1}^{t_1}\frac{B_u^H - B_{t_2}^H}{(u - t_2)^{2 - \alpha}}du \right)^2\right)^{\frac{1}{2}} =: I_5 + I_6 + I_7.
\end{gathered}
\end{equation}
Using the Minkowski's integral inequality we  immediately obtain
\begin{equation}\label{11a}
\begin{gathered}
I_5 \leq \int_{t_2}^{s_2}\left(E\left(\frac{B_u^H - B_{t_2}^H}{(u - t_2)^{2 - \alpha}}\right)^2\right)^{\frac{1}{2}}du
\\= \int_{t_2}^{s_2}(u - t_2)^{H + \alpha - 2}du = \frac{1}{H + \alpha - 1}(s_2 - t_2)^{H + \alpha - 1}.
\end{gathered}
\end{equation}
Similarly,
\begin{equation}\label{11b}
I_7 \leq \frac{1}{H + \alpha - 1}(t_1 - s_1)^{H + \alpha -1}.
\end{equation}

 Again, using the Minkowski's integral inequality and Lemma \ref{lemGaus1} we conclude that

\begin{equation}\label{11c}\begin{gathered}I_6 \leq \int_{s_2}^{s_1} \left(E \left(\frac{B_u^H - B_{t_2}^H}{(u - t_2)^{2 - \alpha}} - \frac{B_u^H - B_{s_2}^H}{(u - s_2)^{2 - \alpha}} \right)^2 \right)^{\frac{1}{2}}du\\
= \int_{s_2}^{s_1} \Bigg[(u - t_2)^{2(H + \alpha - 2)} + (u - s_2)^{2(H + \alpha - 2)}
\\+ \frac{(s_2 - t_2)^{2H} - (u - t_2)^{2H} - (u - s_2)^{2H}}{(u - t_2)^{2 - \alpha}(u - s_2)^{2 - \alpha}} \Bigg]^{\frac{1}{2}}du\\
= \int_{s_2}^{s_1}(u - s_2)^{-{\frac{1}{2}}}(u - t_2)^{-{\frac{1}{2}}}\Bigg[(u - t_2)^{2(H + \alpha - 2)}(u - s_2)(u - t_2)\\
+ (u - s_2)^{2(H + \alpha - 2)}(u - s_2)(u - t_2)
\\+ \frac{(s_2 - t_2)^{2H} - (u - t_2)^{2H} - (u - s_2)^{2H}}{(u - t_2)^{1- \alpha}(u - s_2)^{1 - \alpha}}\Bigg]^{\frac{1}{2}}du \\\leq \int_{s_2}^{s_1}(u - s_2)^{-{\frac{1}{2}}}(u - t_2)^{-{\frac{1}{2}}}\Bigg[(u - t_2)^{2(H + \alpha - 1)} + (u - s_2)^{2(H + \alpha - 1)}\\
+ (u - s_2)^{2(H + \alpha - 2)+1}(s_2-t_2)
+ \frac{(t_2 - s_2)^{2H} - (u - t_2)^{2H} - (u - s_2)^{2H}}{(u - t_2)^{1- \alpha}(u - s_2)^{1 - \alpha}}\Bigg ]^{\frac{1}{2}}du \\
\leq C(H,\alpha)\int_{s_2}^{s_1}(u - s_2)^{-{\frac{1}{2}}}(u - t_2)^{-{\frac{1}{2}}}(s_2 - t_2)^{H + \alpha - 1}du \\
+ C(H,\alpha)\int_{s_2}^{s_1}(u - s_2)^{H + \alpha - 2}(u - t_2)^{-{\frac{1}{2}}}(s_2 - t_2)^{\frac{1}{2}}du =: I_8 + I_9.
\end{gathered}
\end{equation}
Evidently,  $$I_8 =(s_2 - t_2)^{H + \alpha - 1}\int_{s_2}^{s_1}(u - s_2)^{-{\frac{1}{2}}}(u - t_2)^{-{\frac{1}{2}}}du=(s_2 - t_2)^{H + \alpha - 1}I_{10} $$ up to the constant multiplier and for any $0<\varepsilon<\frac12$  integral $I_{10}$  can be rewritten as
\begin{multline*}
I_{10}= \int_{s_2}^{s_1}(u - s_2)^{-{\frac{1}{2}}}(u - t_2)^{-{\frac{1}{2}}}du \\
= \int_{0}^{\frac{s_1 - s_2}{s_2 - t_2}}(y + 1)^{-{\frac{1}{2}}}y^{-{\frac{1}{2}}}dy
\leq \left(\frac{s_1 - s_2}{s_2 - t_2} \right)^\varepsilon\int_{0}^{\frac{s_1 - s_2}{s_2 - t_2}}(y + 1)^{-{\frac{1}{2}}}y^{-{\frac{1}{2}}-\varepsilon}dy\\ \leq \left(\frac{s_1 - s_2}{s_2 - t_2} \right)^\varepsilon\int_{0}^{\infty}(y + 1)^{-{\frac{1}{2}}}y^{-{\frac{1}{2}}-\varepsilon}dy\leq C\big(1 + \varepsilon^{-1}\big)\left(\frac{s_1 - s_2}{s_2 - t_2} \right)^\varepsilon.
\end{multline*}
Therefore, for any  $0<\varepsilon<(H+\alpha-1)\wedge\frac12$
\begin{equation}\label{I10}
I_8 \leq C(H,\alpha)\big(1 + \varepsilon^{-1}\big)(s_2 - t_2)^{H + \alpha - 1 -\varepsilon}(s_1-s_2)^\varepsilon.
\end{equation}
Furthermore,  $$I_9 =(s_2 - t_2)^{\frac12}\int_{s_2}^{s_1}(u - s_2)^{H + \alpha - 2}(u - t_2)^{-{\frac{1}{2}}}du=(s_2 - t_2)^{\frac12}I_{11} $$ up to a constant multiplier. In the case when $H+\alpha<\frac32$   the integral  $I_{11}$ can be rewritten as
\begin{multline*}
I_{11} = \int_{s_2}^{s_1}(u - s_2)^{H + \alpha - 2}(u - t_2)^{-{\frac{1}{2}}}du \\
= \int_{0}^{\frac{s_1 - s_2}{s_2 - t_2}}y^{H + \alpha - 2}(1 + y)^{-{\frac{1}{2}}}(s_2 - t_2)^{H + \alpha - 2 + {\frac{1}{2}}}du \\\leq (s_2 - t_2)^{H + \alpha - {\frac{3}{2}}}\int_{0}^{\infty}y^{H + \alpha - 2}(1 + y)^{-{\frac{1}{2}}}du\leq C(H,\alpha)(s_2 - t_2)^{H + \alpha - {\frac{3}{2}}}.
\end{multline*}
In  case when  $H+\alpha>\frac32$   integral  $I_{11}$ admits an obvious bound
\begin{equation*}
I_{11}\leq\int_{s_2}^{s_1}(u - s_2)^{H + \alpha - 2}(u - s_2)^{-{\frac{1}{2}}}du\leq C(H,\alpha)(s_1-s_2)^{H+\alpha-\frac32}.
\end{equation*}
Finally, for $H+\alpha=\frac32$  integral $I_{11}$ admits the same bound as $I_{10}$.
Therefore, \begin{equation}\label{12f}I_9 \leq C(H,\alpha)(s_2 - t_2)^{H + \alpha - 1}\end{equation} for $H+\alpha<\frac32$, \begin{equation}I_9 \leq C(H,\alpha)(s_2 - t_2)^{\frac12}(s_1-s_2)^{H+\alpha-\frac32}\end{equation} for $H+\alpha>\frac32$, and
\begin{equation}I_9 \leq C(H,\alpha)(s_2 - t_2)^{\frac12 -\varepsilon}(s_1-s_2)^{\varepsilon}\end{equation}
for $H+\alpha=\frac32.$

This implies that
\begin{equation}\label{riv_d<}
F(\textbf{t},\textbf{s})
\leq C(H,\alpha)\big(1 + \varepsilon^{-1}\big)(|t_1 - s_1|\vee|t_2 -s_2|)^{H + \alpha - 1- \varepsilon}(s_1\vee t_1)^{\varepsilon}
\end{equation}
for $H+\alpha\leq\frac32$. In  case $H+\alpha>\frac32$ we can put $\varepsilon=H+\alpha-\frac32\in(0,\frac12)$ in \eqref{I10} and conclude that
\begin{equation}\label{riv_d>}
F(\textbf{t},\textbf{s})
\leq C(H,\alpha)(|t_1 - s_1|\vee|t_2 - s_2|)^{\frac12}(s_1\vee t_1)^{H+\alpha-\frac32}.
\end{equation}

The proof follows immediately from \eqref{5} and (\ref{riv_c})-(\ref{riv_d>}).
\end{proof}

Proof of Theorem \ref{teo4}:
First of all we should verify conditions $(D_1)-(D_3)$. Condition $(D_1)$ is evident, since $X$ is continuous in both  variables. According to the 2nd statement of  Theorem \ref{teo1}, condition $(D_2)$ holds with $\beta=\varepsilon$, $0<\varepsilon<(H + \alpha - 1)\wedge\frac 12$ and $\gamma=H+\alpha-1-\varepsilon$   in  case when $\alpha+H\leq \frac32$, and with $\beta=H+\alpha-\frac32$ and $\gamma=\frac12$ in case when $\alpha+H>\frac32$. According to the first statement of  Theorem \ref{teo1}, condition $(D_3)$ holds with $\delta=H+\alpha-1$.

 Let $A(t)=(t^{H+\alpha-1}|\log t|^p)\vee 1$ for some $p>1$ and for any $t>0$ and let $b_l=e^l,\,l\geq 0$. Then $\delta_l=(e^{l(H+\alpha-1)}l^p)\vee1$ and $A(b_l)=e^{l(H+\alpha-1)}$. Therefore, in this case  series $S(\delta)$ converges  since
$$S(\delta)=e^{H+\alpha-1}+\sum_{l=1}^{\infty}\frac{e^{(l+1)(H+\alpha-1)}}{e^{l(H+\alpha-1)}l^p}=e^{H+\alpha-1}(1+\sum_{l=1}^{\infty}l^{-p})<\infty.$$
Moreover, it is easy to check that $1 + \frac{\beta}{\gamma} - \frac{\delta}{\gamma}=0$ for any values of $\alpha+H$, hence $\kappa_1=0$. This implies  that all   conditions of   Theorem \ref{teo2} hold true and we can apply the  theorem with $A(t)=(t^{H+\alpha-1}|\log t|^p)\vee 1$ which concludes the proof.\hfill$\Box$

\begin{remark}\label{prosto} Instead of the fractional derivative, we can consider the fractional Brownian motion $B^H_t$  itself and apply   the same reasoning to it. This case is much simpler and we immediately obtain that  $\sup_{0\leq s\leq t}|B^H_s|\leq ((t^{H}(\log(t))^{p})\vee1)\xi(p)$ for any $p>1$.
\end{remark}

\textbf{Acknowledgments} This paper was partially supported by NSERC grant 261855. We are thankful to Ivan Smirnov for the assistance in the preparation of the manuscript.

\end{document}